\documentclass{article}
\usepackage[utf8]{inputenc}

\usepackage{amsmath,amssymb,amsthm}
\usepackage{bm}
\usepackage{algorithm,algorithmic}
\usepackage{enumerate}
\usepackage[margin=25mm]{geometry}
\usepackage[numbers]{natbib}
\usepackage{mathtools}
\mathtoolsset{showonlyrefs,showmanualtags}

\newcommand{\argmin}{\mathop{\rm argmin}\limits}
\newcommand{\dom}{\mathop{\rm dom}}

\newtheorem{theorem}{Theorem}
\newtheorem{corollary}{Corollary}
\newtheorem{lemma}{Lemma}

\newtheorem{example}{Example}
\newtheorem{proposition}{Proposition}

\title{Exact Penalization at D-Stationary Points of\\ Cardinality- or Rank-Constrained Problem}
\author{Shotaro Yagishita\thanks{The Institute of Statistical Mathematics, Japan, E-mail: syagi@ism.ac.jp}
\and
Jun-ya Gotoh\thanks{Department of Data Science for Business Innovation, Chuo University, Japan, E-mail: jgoto@indsys.chuo-u.ac.jp}}
\date{\today}

\begin{document}

\maketitle

\begin{abstract}
This paper studies the properties of d-stationary points of the trimmed lasso (Luo et al., 2013, Huang et al., 2015, and Gotoh et al., 2018) and the composite optimization problem with the truncated nuclear norm (Gao and Sun, 2010, and Zhang et al., 2012), which are known as tight relaxations of nonconvex optimization problems that have either cardinality or rank constraints, respectively.
First, we extend the trimmed lasso for broader applications and for conducting a unified analysis of the property of the generalized trimmed lasso.
Next, the equivalence between local optimality and d-stationarity of the generalized trimmed lasso is shown under a suitable assumption. 
More generally, the equivalence is shown for problems that minimize a function defined by the pointwise minimum of finitely many convex functions.
Then, we present new results of the exact penalty at d-stationary points of the generalized trimmed lasso and the problem with the truncated nuclear norm penalty under mild assumptions.
Our exact penalty results are not only new, but also intuitive, so that we can see what properties of the problems play a role for establishing the exact penalization. 
Lastly, matters relating to algorithms are also discussed.
\end{abstract}

\section{Introduction}\label{sec:intro}
Optimization problems that pursue sparsity or low-rank structures of the solutions frequently appear in practice.
One of the basic forms comes with the cardinality or rank constraint, which aims at directly constraining the number of non-zero elements of the vector of decision variables or the rank of the matrix, respectively. 
While such constrained problems have a broad range of applications, they are computationally intractable in general due to their nonconvexity and discontinuity.
One of the most popular ways to overcome the intractability is the relaxation of the cardinality and rank with the $\ell_1$ norm and nuclear norm, respectively \citep{tibshirani1996regression,jaggi2010simple}. 
In particular, the former approximation is known as lasso when it applies to least squares estimation. 
In order to reinforce the sparsity of solution, approximations of the $\ell_0$ norm such as the $\ell_p$-approximation with $0<p<1$, SCAD \citep{fan2001variable}, and MCP \citep{zhang2010nearly} have been proposed by employing nonconvex regularizers in place of the $\ell_1$ norm. 
However, their solution does not always satisfy the cardinality since all of those approaches are based on an approximation of the cardinality of the solution vector.

In recent years, nonconvex tight relaxations of the problem with the cardinality constraint and the one with the rank constraint have been proposed \citep{gao2010majorized,zhang2012matrix,luo2013new,huang2015two,gotoh2018dc}. 
In this paper, we focus on the tight relaxations using the {\it trimmed $\ell_1$ norm} and {\it truncated nuclear norm}, which are both nonconvex but continuous functions unlike the cardinality of vector and the rank of matrix. 
More specifically, we add those functions to the objective function instead of the cardinality constraint and rank constraint so that the trimmed $\ell_1$ norm and truncated nuclear norm play a role of a penalty function on the violation of the cardinality constraint and rank constraint, respectively. 
The former is called the trimmed lasso \citep{luo2013new,huang2015two,gotoh2018dc}.
In \citet{yagishita2024pursuit}, the trimmed lasso has been extended to be applicable to a structured sparsity.

Interestingly, it has been shown that under some conditions the penalty functions defined by the trimmed $\ell_1$ norm and truncated nuclear norm can be ``exact'' for the (structured) sparsity and rank constraints, respectively \citep{bi2016error,gotoh2018dc,ahn2017difference,bertsimas2017trimmed,liu2020exact,nakayama2021superiority,amir2021trimmed,yagishita2024pursuit,qian2023calmness,lu2023exact}.  
Here, by ``a penalty function is exact'' we mean that a certain kind of solution of the penalized problem satisfies the (structured) sparsity or rank constraint under suitable assumptions. 
It is noteworthy that the existing results of the exact penalization have been established under strong assumptions which do not seem to be confirmed easily in practice. 
For example, some statements are only valid at globally optimal solutions, but due to the nonconvexity of the relaxations, it is generally difficult to obtain a guaranteed global optimum in practice.
In that sense, the exact penalty result at globally optimal solutions is not practical. 
In addition, while there are other results valid at stationary points that are attainable by practical algorithms (e.g., those described in Section \ref{sec:discussions}), unfortunately, the problems to which the results can be applied are limited in practice.

To fill the gap between the assumption for exact penalization and practice, we first extend the trimmed lasso for broader applications and refer to it as the generalized trimmed lasso.
Since the generalized trimmed lasso includes all existing variations of the trimmed lasso, it enables us to analyze their properties in a unified manner.
As will be clear, some results are reinforced by the refinement of the analysis based on this generalization. 
Next, we derive a sufficient condition for a d-stationary point of the generalized trimmed lasso to be a locally optimal solution so as to associate the following results with local optimality. 
In addition, a condition is given for a more general class of optimization problems. 
Then, new results of exact penalization at d-stationary points of the generalized trimmed lasso and the penalized problem by the truncated nuclear norm are presented under mild assumptions. 
Our results are proven in a different way from the existing ones, leading to intuitive and stronger implications.
Finally, we discuss algorithms for finding a d-stationary point that satisfy the (structured) sparsity or rank constraint.

Contributions of our work are summarized as follows:
\begin{itemize}
\item 
For optimization problems with the trimmed $\ell_1$ norm, we clarify the class of optimization problems for which locally optimal solutions and d-stationary points are equivalent.
Within that class, any d-stationary point is proven to be locally optimal.
It is noteworthy that while using the information of the second order derivative of objective function is a common strategy, our result does not depend on the second order information and is established despite the inclusion of the nonsmooth regularizers.
\item We extend the trimmed lasso for broader applications and for conducting a unified analysis of the property. While the existing results of exact penalization for the context we are interested in have been established assuming different conditions in each context, the unified analysis leads us to an essential assumption for the trimmed $\ell_1$ norm to achieve an exact penalization at d-stationary points. 
Our analysis for the trimmed $\ell_1$ norm is extended to the truncated nuclear norm.
Our results can apply to the problems for which the exact penalization has not yet been established.
In particular, to the best of our knowledge, the exact penalization at d-stationary points for composite problems with the truncated nuclear norm is first presented in this paper.
For general nonlinear programming, the exact penalty results at stationary points were given by \citet{demyanov1998exact} and \citet{cui2021modern}.
However, due to the generality of nonlinear programming, it is not obvious to check whether the assumption in their results holds for problems we deal with.
In addition, the corresponding results of \citet{demyanov1998exact} and \citet{cui2021modern} assumed that the objective function is Lipschitz continuous, but we do not.
\end{itemize}

The rest of this paper is organized as follows.
The remainder of this section is devoted to notation and preliminary results, which include the equivalence result of local optimality and d-stationarity.
Section \ref{sec:GTL} introduces the generalized trimmed lasso and shows an exact penalty result at d-stationary points. 
Extensions to the exact penalty results for the trimmed lasso with some additional constraint and the problem with the truncated nuclear norm are shown in Sections \ref{sec:TL-const} and \ref{sec:truncated-nuclear}, respectively. 
In Section \ref{sec:discussions}, we discuss matters relating to algorithms for the penalized problems.
Finally, Section \ref{sec:conclusion} concludes the paper with some remarks.

\subsection{Notation and Preliminaries}
For an integer $n$, the set $[n]$ is defined by $[n]\coloneqq\{1,\ldots,n\}$.
Let $\mathbf{1}$ and $e_i$ denote the all-ones vector and the vector whose $i$th element is $1$ and 0 otherwise, respectively.
The standard inner product of $x,y\in\mathbb{R}^n$ is denoted by $x^\top y$.
The $\ell_1$ norm, $\ell_2$ norm, and $\ell_\infty$ norm of $x\in\mathbb{R}^n$ are defined by $\|x\|_1\coloneqq\sum_{i\in[n]}|x_i|$, $\|x\|_2\coloneqq\sqrt{x^\top x}$, and $\|x\|_\infty\coloneqq\max_{i\in[n]}|x_i|$, respectively.
For a vector $x\in\mathbb{R}^n$, $\|x\|_0$ denotes the number of nonzero elements.

Let $I$ denote the identity matrix of appropriate size.
By $\mathrm{diag}(x_1,\ldots,x_n)$ we denote the diagonal matrix whose $i$th diagonal element is $x_i$.
The standard inner product of matrices $X,Y\in\mathbb{R}^{m\times n}$ is defined by $X\bullet Y\coloneqq\mathrm{tr}(X^\top Y)$, where $\mathrm{tr}(Z)$ is the trace of a square matrix $Z$.
The Hadamard product is denoted by $X\circ Y$.
The Frobenius norm of $X\in\mathbb{R}^{m\times n}$ is defined by $\|X\|_F\coloneqq\sqrt{X\bullet X}$.
We denote the $i$th largest singular value of $X\in\mathbb{R}^{m\times n}$ by $\sigma_i(X)$  for $i=1,\ldots,\min\{m,n\}$. 
The smallest nonzero singular value of a nonzero matrix $X$ is denoted by $\sigma_{\min}(X)$.
For a matrix $X\in\mathbb{R}^{m\times n}$, we define $\|X\|_1\coloneqq\sum_{i\in[m]}\sum_{j\in[n]}|X_{i,j}|, \|X\|_2\coloneqq\sigma_1(X)$, and $\|X\|_*\coloneqq\sum_{i=1}^{\min\{m,n\}}\sigma_i(X)$.
Let $\mathrm{rank}(X)$ and $\ker(X)$ denote the rank and kernel of a matrix $X$, respectively.
The largest eigenvalue and smallest eigenvalue of a symmetric matrix $X$ are denoted by $\lambda_{\max}(X)$ and $\lambda_{\min}(X)$, respectively.
The set of symmetric matrices of order $n$ is denoted by $\mathcal{S}^n$.
For $X\in\mathcal{S}^n$, $X\succeq0$ means that $X$ is a positive semidefinite matrix.
For a linear subspace $\mathcal{L}$ of $\mathbb{R}^n$, $\mathcal{L}^\perp$ denotes its orthogonal complement.
The following lemma is immediately apparent by considering the singular value decomposition.

\begin{lemma}\label{lem:restricted bounded below}
Let $A\neq 0$ be an $m\times n$ matrix.
The inequality
\begin{equation}\label{eq:restricted bounded below}
\|Ax\|_2\ge\sigma_{\min}(A)\|x\|_2
\end{equation}
holds for $x\in\ker(A)^\perp$.
Additionally, the inequality is tight in the sense that, for any $\sigma>\sigma_{\min}(A)$, the one replacing $\sigma_{\min}(A)$ by $\sigma$ does not hold.
\end{lemma}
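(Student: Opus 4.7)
The plan is to reduce the statement to a diagonal problem via the singular value decomposition, which makes both the inequality and its tightness essentially a computation with coordinates.

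First I would fix an SVD $A = U \Sigma V^\top$ with orthogonal matrices $U \in \mathbb{R}^{m\times m}$, $V \in \mathbb{R}^{n\times n}$ and $\Sigma \in \mathbb{R}^{m\times n}$ whose $(i,i)$ entries are $\sigma_1(A) \ge \cdots \ge \sigma_r(A) > 0 = \sigma_{r+1}(A) = \cdots$, where $r = \mathrm{rank}(A)$. Since $A \neq 0$ we have $r \ge 1$ and the smallest nonzero singular value $\sigma_{\min}(A) = \sigma_r(A)$ is well defined. The columns $v_{r+1},\ldots,v_n$ of $V$ form an orthonormal basis of $\ker(A)$, so $\ker(A)^\perp = \mathrm{span}\{v_1,\ldots,v_r\}$.

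Next, for any $x \in \ker(A)^\perp$ I would expand $x = \sum_{i=1}^r c_i v_i$ with $c_i = v_i^\top x$, use orthonormality to get $\|x\|_2^2 = \sum_{i=1}^r c_i^2$, and compute $Ax = \sum_{i=1}^r c_i \sigma_i(A)\, u_i$, so that
\begin{equation}
\|Ax\|_2^2 = \sum_{i=1}^r c_i^2 \sigma_i(A)^2 \ge \sigma_r(A)^2 \sum_{i=1}^r c_i^2 = \sigma_{\min}(A)^2 \|x\|_2^2.
\end{equation}
Taking square roots yields \eqref{eq:restricted bounded below}.

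For the tightness claim, I would exhibit an equality case: take $x = v_r \in \ker(A)^\perp$. Then $\|v_r\|_2 = 1$ and $A v_r = \sigma_r(A) u_r$, hence $\|A v_r\|_2 = \sigma_{\min}(A) = \sigma_{\min}(A)\|v_r\|_2$. Consequently, for any $\sigma > \sigma_{\min}(A)$, the inequality $\|Ax\|_2 \ge \sigma \|x\|_2$ already fails at $x = v_r$, proving the tightness assertion. I do not expect a serious obstacle here; the only care required is to make sure that $v_r$ — as opposed to some $v_i$ with larger singular value — is chosen, so that the constant $\sigma_{\min}(A)$ is genuinely attained.
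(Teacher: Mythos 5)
Your proof is correct and follows exactly the route the paper intends: the paper states the lemma as immediately apparent from the singular value decomposition and omits the details, and your SVD expansion of $x$ in the right singular vectors spanning $\ker(A)^\perp$, together with the equality case $x=v_r$ for tightness, supplies precisely those details.
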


Let $\mathbb{E}$ be a finite-dimensional inner product space (in this paper, $\mathbb{E}$ stands for $\mathbb{R}^n$ or $\mathbb{R}^{m\times n}$).
For $\mathcal{C}\subset\mathbb{E}$ and $x\in\mathbb{E}$, $\mathcal{F}(x;\mathcal{C})$ is the feasible cone of $\mathcal{C}$ at $x$, that is,
\begin{align}
    \mathcal{F}(x;\mathcal{C})\coloneqq\begin{cases}
        \{d\in\mathbb{E}\mid x+\varsigma d\in\mathcal{C},~ \forall\varsigma\in(0,\varsigma')~ \mbox{for some}~ \varsigma'>0\}, &x\in\mathcal{C},\\
        \emptyset, &x\notin\mathcal{C}.
    \end{cases}
\end{align}
The domain of a function $f:\mathbb{E}\to(-\infty,\infty]$ is denoted by $\dom f\coloneqq\{x\in\mathbb{E}\mid f(x)<\infty\}$.
For $\mathcal{C}\subset\mathbb{E}$, $\delta_\mathcal{C}:\mathbb{E}\to\{0,\infty\}$ denotes the indicator function of $\mathcal{C}$.
A function $f:\mathbb{E}\to(-\infty,\infty]$ is said to be directionally differentiable if the directional derivative of $f$,
\begin{equation}
f'(x;d)\coloneqq\lim_{\varsigma\searrow0}\frac{f(x+\varsigma d)-f(x)}{\varsigma}
\end{equation}
exists for any point $x\in\dom f$ and direction $d\in\mathcal{F}(x;\dom f)$.
For a Lipschitz continuous directionally differentiable function, the following lemma holds, which is obvious and hence the proof is omitted.

\begin{lemma}\label{lem:d-derivative-lipschitz}
Let $f:\mathbb{E}\to(-\infty,\infty]$ is directionally differentiable and Lipschitz continuous on $\dom f$ with constant $M$ under some norm $\|\cdot\|$, that is, $|f(x)-f(y)|\le\|x-y\|$ holds for all $x,y\in\dom f$.
Then, the inequality
\begin{equation}
\left|f'(x;d)\right|\le M\|d\|
\end{equation}
holds for all $x\in\dom f$ and $d\in\mathcal{F}(x;\dom f)$.
\end{lemma}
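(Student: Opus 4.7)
The plan is to apply the Lipschitz bound to the incremental quotient that defines $f'(x;d)$ and then pass to the limit, using the existence of that limit guaranteed by directional differentiability.

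Concretely, fix $x\in\dom f$ and $d\in\mathcal{F}(x;\dom f)$. By definition of the feasible cone, there exists some $\varsigma'>0$ such that $x+\varsigma d\in\dom f$ for every $\varsigma\in(0,\varsigma')$. Since the Lipschitz bound $|f(y)-f(z)|\le M\|y-z\|$ holds on $\dom f$, I can apply it to $y=x+\varsigma d$ and $z=x$ to obtain $|f(x+\varsigma d)-f(x)|\le M\varsigma\|d\|$ for every $\varsigma\in(0,\varsigma')$. Dividing by $\varsigma>0$ yields
\begin{equation}
\left|\frac{f(x+\varsigma d)-f(x)}{\varsigma}\right|\le M\|d\|.
\end{equation}

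Finally, since $f$ is directionally differentiable, the left-hand side converges to $|f'(x;d)|$ as $\varsigma\searrow 0$, and the absolute value is continuous, so the bound $|f'(x;d)|\le M\|d\|$ is preserved in the limit.

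There is no real obstacle here: the only thing to be careful about is ensuring that the difference quotient is well-defined for all sufficiently small $\varsigma>0$, which is exactly what $d\in\mathcal{F}(x;\dom f)$ provides, and that the passage to the limit is legitimate, which is exactly what directional differentiability provides. This is why the author labels the lemma as obvious and omits its proof.
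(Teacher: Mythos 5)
Your proof is correct and is exactly the straightforward argument the paper has in mind when it omits the proof as obvious: bound the difference quotient by $M\|d\|$ using the Lipschitz property (well-defined for small $\varsigma$ by the definition of $\mathcal{F}(x;\dom f)$) and pass to the limit granted by directional differentiability. No gaps.
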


The gradient of a function $f$ at a point $x$ is denoted by $\nabla f(x)$.
The vector $\nabla_{x_l} f(x)$ denotes the subvector of $\nabla f(x)$ corresponding to $x_l$, where $x_l$ is a subvector of $x$.
A continuous differentiable function $f$ on an open set $\mathcal{O}\subset\mathbb{E}$ is said to be $L$-smooth if there exists $L\ge0$ such that $\|\nabla f(x)-\nabla f(y)\|\leq L\|x-y\|$ for any $x, y\in\mathcal{O}$, where $\|\cdot\|$ is induced by the inner product of $\mathbb{E}$.

Besides, we present some results on the optimization problem:
\begin{align}\label{problem:general}
\underset{x\in \mathbb{E}}{\mbox{minimize}} & \quad F(x),
\end{align}
where $F:\mathbb{E}\to(-\infty,\infty]$ is directionally differentiable.
Formulation \eqref{problem:general} is a more general problem than the ones we will address in the next section and beyond.
We call $x^*\in\dom F$ a d(irectional)-stationary point of \eqref{problem:general} if $F'(x^*;d)\ge0$ holds for all $d\in\mathcal{F}(x^*;\dom F)$.
Obviously, any locally optimal solution of \eqref{problem:general} is a d-stationary point of the problem.
The converse argument holds for a certain class of functions.
We first give the following formula, which is an extension of Theorem 3.4 of \citet{delfour2019introduction}.

\begin{lemma}\label{lem:d-derivative-min}
Let $f_i:\mathbb{E}\to(-\infty,\infty]$ be directionally differentiable and $\dom f_i$ be closed convex for $i=1,\ldots,M$.
Suppose that $F(x)\coloneqq\min_{i\in[M]}f_i(x)$ is continuous on $\dom F$.
Then, $F$ is directionally differentiable and it holds that 
\begin{equation}
F'(x;d)=\min_{i\in I^*(x,d)}f_i'(x;d)
\end{equation}
for any $x\in\dom F$ and $d\in\mathcal{F}(x;\dom F)$, where $I^*(x,d)=\{i\in I(x,d)\mid f_i(x)=F(x)\}$ and $I(x,d)=\{i\in[M]\mid x\in\dom f_i,~ d\in\mathcal{F}(x;\dom f_i)\}$.
\end{lemma}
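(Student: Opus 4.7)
The plan is to show, for fixed $x \in \dom F$ and $d \in \mathcal{F}(x;\dom F)$, that only the indices in $I^*(x,d)$ govern the behavior of $F(x+\varsigma d)$ as $\varsigma \searrow 0$, and then to take the limit of the difference quotient over this finite index set. The first step is to restrict the minimum defining $F$ to the set $I(x,d)$: if $i \notin I(x,d)$ then either $x \notin \dom f_i$, in which case closedness of $\dom f_i$ supplies a neighborhood of $x$ disjoint from $\dom f_i$, so $x+\varsigma d \notin \dom f_i$ for all small $\varsigma > 0$; or $x \in \dom f_i$ with $d \notin \mathcal{F}(x;\dom f_i)$, in which case convexity of $\dom f_i$ forces $x+\varsigma d \notin \dom f_i$ for every $\varsigma > 0$ (otherwise the whole segment from $x$ to such a point would lie in $\dom f_i$, putting $d$ into the feasible cone). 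Hence for sufficiently small $\varsigma > 0$,
\[ F(x+\varsigma d) = \min_{i \in I(x,d)} f_i(x+\varsigma d). \]

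Next I would further reduce the index set to the active minimizers $I^*(x,d)$. For any $i \in I(x,d) \setminus I^*(x,d)$, $f_i(x) > F(x)$; directional differentiability yields $f_i(x+\varsigma d) \to f_i(x)$ as $\varsigma \searrow 0$, and the hypothesized continuity of $F$ on $\dom F$ yields $F(x+\varsigma d) \to F(x)$. Consequently, for sufficiently small $\varsigma > 0$, $f_i(x+\varsigma d) > F(x+\varsigma d)$ for every inactive $i \in I(x,d)$, so these indices can be dropped from the minimum, giving
\[ F(x+\varsigma d) = \min_{i \in I^*(x,d)} f_i(x+\varsigma d). \]

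Finally, for each $i \in I^*(x,d)$ we have $f_i(x) = F(x)$, so
\[ \frac{F(x+\varsigma d) - F(x)}{\varsigma} = \min_{i \in I^*(x,d)} \frac{f_i(x+\varsigma d) - f_i(x)}{\varsigma}, \]
and since $I^*(x,d)$ is finite and each quotient on the right converges to $f_i'(x;d)$, the minimum exchanges with the limit as $\varsigma \searrow 0$, producing the claimed identity and, in passing, the directional differentiability of $F$ at $x$ in direction $d$. I expect the only delicate part of the argument to be the first step, specifically the convexity-plus-closedness case analysis that discards indices outside $I(x,d)$; once the minimum is restricted to $I(x,d)$, the rest is a routine continuity argument combined with a finite-min/limit interchange.
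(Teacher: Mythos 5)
Your proposal is correct and follows essentially the same route as the paper's proof: discard indices outside $I(x,d)$ via closedness/convexity of the domains, then drop the inactive indices of $I(x,d)$ by a continuity-along-the-ray argument, and finish with the finite-min/limit interchange on the difference quotients. The only cosmetic difference is that the paper first records $F(x)=\min_{i\in I(x,d)}f_i(x)$ (and notes $I(x,d)\neq\emptyset$) before restricting to $I^*(x,d)$, whereas you fold this into the strict-inequality step.
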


\begin{proof}
Let $x\in\dom F$ and $d\in\mathcal{F}(x;\dom F)$.
For any $i$ such that $x\in\dom f_i$ and $d\notin\mathcal{F}(x;\dom f_i)$, $f_i(x+\varsigma d)=\infty$ holds for all $\varsigma>0$ by the convexity of $\dom f_i$.
From the closedness of $\dom f_i$, there is a neighborhood $\mathcal{N}$ of $x$ on which $f_i\equiv\infty$ holds for any $i$ such that $x\notin\dom f_i$.
There is $\varsigma'>0$ such that $x+\varsigma d\in\mathcal{N}$ and $f_i(x+\varsigma d)<\infty$ for any $0<\varsigma<\varsigma'$ and $i\in I(x,d)$.
Thus, we have
\begin{equation}
F(x+\varsigma d)=\min_{i\in I(x,d)}f_i(x+\varsigma d)
\end{equation}
for all $0<\varsigma<\varsigma'$.
Note that $I(x,d)$ is not empty because $d\in\mathcal{F}(x;\dom F)$.
Using the continuity of $F(x+\varsigma d)$ and $f_i(x+\varsigma d)$ for $i\in I(x,d)$ at $\varsigma=0$, we obtain
\begin{equation}
F(x)=\lim_{\varsigma\searrow0}F(x+\varsigma d)=\lim_{\varsigma\searrow0}\min_{i\in I(x,d)}f_i(x+\varsigma d)=\min_{i\in I(x,d)}\lim_{\varsigma\searrow0}f_i(x+\varsigma d)=\min_{i\in I(x,d)}f_i(x).
\end{equation}
Again from the continuity of $f_i(x+\varsigma d)$ for $i\in I(x,d)$, there exists $\varsigma''>0$ such that
\begin{equation}
F(x+\varsigma d)=\min_{i\in I^*(x,d)}f_i(x+\varsigma d)
\end{equation}
for all $0\le\varsigma<\varsigma''$.
Consequently, it holds that
\begin{align}
F'(x;d)=\lim_{\varsigma\searrow0}\frac{F(x+\varsigma d)-F(x)}{\varsigma} &=\lim_{\varsigma\searrow0}\frac{\min_{i\in I^*(x,d)}\{f_i(x+\varsigma d)-f_i(x)\}}{\varsigma}\\
&=\min_{i\in I^*(x,d)}\lim_{\varsigma\searrow0}\frac{f_i(x+\varsigma d)-f_i(x)}{\varsigma}\\
&=\min_{i\in I^*(x,d)}f_i'(x;d),
\end{align}
which completes the proof.
\end{proof}

Using the above and the following lemmas (the latter one is a slight modification of \citet{delfour2019introduction}), we will show a property for a class of functions.

\begin{lemma}[cf. proof of $\mbox{\citep[Theorem 4.5, p.130]{delfour2019introduction}}$]\label{lem:d-derivative-descent-lemma}
Let $f:\mathbb{E}\to(-\infty,\infty]$ be convex and directionally differentiable.
The inequality
\begin{equation}
f(y)\ge f(x)+f'(x;y-x)
\end{equation}
holds for all $x,y\in\dom f$.
\end{lemma}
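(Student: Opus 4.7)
The plan is to exploit the classical monotonicity of the difference quotient along a segment for convex functions. Fix $x,y\in\dom f$ and set $d\coloneqq y-x$. Since $f$ is convex, $\dom f$ is convex, so the whole segment $[x,y]$ lies in $\dom f$, which in particular ensures $d\in\mathcal{F}(x;\dom f)$ and hence the directional derivative $f'(x;d)$ is well-defined by the hypothesis of directional differentiability.

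Next I would prove that the map
\begin{equation}
g(\varsigma)\coloneqq\frac{f(x+\varsigma d)-f(x)}{\varsigma},\qquad \varsigma\in(0,1],
\end{equation}
is nondecreasing in $\varsigma$. The standard argument is a three-point convexity inequality: for $0<\varsigma_1<\varsigma_2\le 1$, write $x+\varsigma_1 d=\frac{\varsigma_1}{\varsigma_2}(x+\varsigma_2 d)+\bigl(1-\frac{\varsigma_1}{\varsigma_2}\bigr)x$ and apply convexity to obtain
\begin{equation}
f(x+\varsigma_1 d)-f(x)\le\frac{\varsigma_1}{\varsigma_2}\bigl(f(x+\varsigma_2 d)-f(x)\bigr),
\end{equation}
which, after dividing by $\varsigma_1$, gives $g(\varsigma_1)\le g(\varsigma_2)$.

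Finally, taking the limit $\varsigma\searrow 0$, monotonicity yields $f'(x;d)=\lim_{\varsigma\searrow 0}g(\varsigma)=\inf_{\varsigma\in(0,1]}g(\varsigma)\le g(1)=f(y)-f(x)$, which rearranges to the claimed inequality.

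There is really no serious obstacle here: everything is a direct consequence of convexity and the definition of the directional derivative, so the only nontrivial step is checking the monotonicity of $g$, and that is the standard three-point inequality recalled above. The one point worth being explicit about is that the convexity of $\dom f$ (inherited from the convexity of $f$) is what guarantees $y-x\in\mathcal{F}(x;\dom f)$ so that $f'(x;y-x)$ in the statement is meaningful.
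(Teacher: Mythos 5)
Your proof is correct: the monotonicity of the difference quotient via the three-point convexity inequality, followed by taking the infimum at $\varsigma=1$, is exactly the standard argument, and you rightly flag that convexity of $\dom f$ makes $y-x\in\mathcal{F}(x;\dom f)$ so that $f'(x;y-x)$ is well-defined. The paper itself omits the proof and simply cites Delfour's Theorem 4.5, whose proof proceeds by this same monotone-quotient argument, so your route matches the intended one.
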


\begin{proposition}\label{prop:PMFMcvx=>LFcvx}
Let $f_i:\mathbb{E}\to(-\infty,\infty]$ be continuous on its domain, convex, and directionally differentiable and $\dom f_i$ be closed for $i=1,\ldots,M$.
Suppose that $F(x)\coloneqq\min_{i\in[M]}f_i(x)$ is continuous on $\dom F$.
Then, for any $x\in\dom F$, there exists a neighborhood $\mathcal{N}$ of $x$ such that $y-x\in\mathcal{F}(x;\dom F)$ and
\begin{equation}
F(y)\ge F(x)+F'(x;y-x)
\end{equation}
for all $y\in\mathcal{N}\cap\dom F$.
\end{proposition}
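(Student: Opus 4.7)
The goal is to exhibit, for each $x \in \dom F$, a neighborhood $\mathcal{N}$ of $x$ such that every $y \in \mathcal{N} \cap \dom F$ both makes $y - x$ a feasible direction into $\dom F$ and satisfies the convex-style underestimate. My plan is to shrink $\mathcal{N}$ in two stages: first to rule out indices $i \notin I(x) := \{i : x \in \dom f_i\}$ using closedness of the domains, and second to rule out indices $i \in I(x) \setminus I^*(x)$ (where $I^*(x) := \{i \in I(x) : f_i(x) = F(x)\}$) using continuity. The final estimate will follow by applying Lemma~\ref{lem:d-derivative-descent-lemma} to the few remaining $f_i$ and then invoking Lemma~\ref{lem:d-derivative-min}.

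For the first stage, since each $\dom f_i$ is closed with $x \notin \dom f_i$ for $i \notin I(x)$, a small open ball around $x$ misses all such $\dom f_i$, and finiteness of $[M]$ allows one $\mathcal{N}$ to serve for all. Then any $y \in \mathcal{N} \cap \dom F$ lies in some $\dom f_i$ with $i \in I(x)$; convexity of $\dom f_i$ (inherited from convexity of $f_i$) places the segment $[x, y]$ inside $\dom f_i \subseteq \dom F$, which yields $y - x \in \mathcal{F}(x; \dom f_i) \subseteq \mathcal{F}(x; \dom F)$. For the second stage, choose $\epsilon > 0$ strictly below $\tfrac{1}{2}\min_{i \in I(x) \setminus I^*(x)}(f_i(x) - F(x))$ (arbitrary if that index set is empty) and shrink $\mathcal{N}$ using continuity so that $|f_i(y) - f_i(x)| < \epsilon$ on $\mathcal{N} \cap \dom f_i$ for each $i \in I(x)$ and $|F(y) - F(x)| < \epsilon$ on $\mathcal{N} \cap \dom F$. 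Then for any $i \in I(x) \setminus I^*(x)$ with $y \in \dom f_i$ one gets $f_i(y) > f_i(x) - \epsilon \ge F(x) + \epsilon > F(y)$, so such $i$ cannot attain the minimum defining $F(y)$. Hence $F(y) = \min_{i \in J(y)} f_i(y)$, where $J(y) := \{i \in I^*(x) : y \in \dom f_i\}$ is nonempty.

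To conclude, convexity of $\dom f_i$ for $i \in J(y)$ gives $y - x \in \mathcal{F}(x; \dom f_i)$, so $J(y) \subseteq I^*(x, y-x)$. Applying Lemma~\ref{lem:d-derivative-descent-lemma} yields $f_i(y) \ge f_i(x) + f_i'(x; y-x) = F(x) + f_i'(x; y-x)$ for each $i \in J(y)$, since $f_i(x) = F(x)$. Minimizing over $J(y)$ and combining with Lemma~\ref{lem:d-derivative-min},
\begin{equation*}
F(y) \ge F(x) + \min_{i \in J(y)} f_i'(x; y-x) \ge F(x) + \min_{i \in I^*(x, y-x)} f_i'(x; y-x) = F(x) + F'(x; y-x).
\end{equation*}
The main obstacle is the second shrinkage: a priori $F(y)$ might be achieved by some index $i$ with $f_i(x) > F(x)$, for which the descent inequality at $x$ carries the extra slack $f_i(x) - F(x) > 0$ that we cannot absorb into $F'(x; y-x)$. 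The continuity-based inactivation of non-$I^*$ indices is the key step that reduces the minimum defining $F(y)$ to $I^*$-type indices and makes the convex descent estimate line up cleanly with the formula for $F'(x; y-x)$.
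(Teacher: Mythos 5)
Your proof is correct and follows essentially the same route as the paper: shrink a neighborhood so that, by closedness of the domains and continuity of the $f_i$ and $F$, only the active indices $I^*(x)$ can realize $F(y)$; use convexity of the domains to get $y-x\in\mathcal{F}(x;\dom f_i)\subset\mathcal{F}(x;\dom F)$; then apply Lemma~\ref{lem:d-derivative-descent-lemma} to those indices and identify the resulting minimum with $F'(x;y-x)$ via Lemma~\ref{lem:d-derivative-min}. The only difference is presentational (your two-stage $\epsilon$-argument versus the paper's single threshold $\overline{F}$ strictly between $F(x)$ and the inactive values), so no further comment is needed.
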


\begin{proof}
Let $I^*(x)=\{i\in[M]\mid f_i(x)=F(x)\}$ and $\overline{F}$ be a real number satisfying $F(x)<\overline{F}<\inf\{f_i(x)\mid f_i(x)>F(x)\}$.
From the continuity of $F$ and $f_i$, and the closedness of $\dom f_i$, there is a neighborhood $\mathcal{N}$ of $x$ such that $F(y)<\overline{F}<f_i(y)$ holds for any $y\in\mathcal{N}\cap\dom F$ and $i\notin I^*(x)$.
This implies
\begin{equation}
F(y)=\min_{\substack{i\in I^*(x)\\f_i(y)<\infty}}f_i(y)
\end{equation}
for $y\in\mathcal{N}\cap\dom F$.
For all $i\in I^*(x)$ such that $f_i(y)<\infty$, we see from convexity of $\dom f_i$ and $x\in\dom f_i$ that $y-x\in\mathcal{F}(x;\dom f_i)\subset\mathcal{F}(x;\dom F)$.
Consequently, we have
\begin{align}
F(y)-F(x) &=\min_{\substack{i\in I^*(x,y-x)\\f_i(y)<\infty}}\{f_i(y)-f_i(x)\}\\
&\ge\min_{\substack{i\in I^*(x,y-x)\\f_i(y)<\infty}}f_i'(x;y-x)\\
&\ge\min_{i\in I^*(x,y-x)}f_i'(x;y-x)\\
&=F'(x;y-x)
\end{align}
for $y\in\mathcal{N}\cap\dom F$, where the first inequality follows from Lemma \ref{lem:d-derivative-descent-lemma} and the last equality follows from Lemma \ref{lem:d-derivative-min}.
\end{proof}

The above proposition implies that the class of functions defined by the pointwise minimum of finitely many convex functions is a subclass of locally first-order convex functions (see Section 4 of \citet{chang2017local}).
The following is an immediate corollary of Proposition \ref{prop:PMFMcvx=>LFcvx}.

\begin{corollary}\label{cor:local-opt<=>d-stationary}
Suppose that all assumptions of Proposition \ref{prop:PMFMcvx=>LFcvx} hold.
If $x^*\in\dom F$ is a d-stationary point of \eqref{problem:general}, then $x^*$ is locally optimal to \eqref{problem:general}.
\end{corollary}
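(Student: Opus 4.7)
The plan is to derive the corollary as an essentially immediate consequence of Proposition \ref{prop:PMFMcvx=>LFcvx} combined with the definition of d-stationarity. The proposition already does the nontrivial work of establishing a one-sided ``convex-like'' lower bound for $F$ around $x^*$, and d-stationarity is tailor-made to convert that lower bound into local optimality.

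Concretely, I would proceed as follows. Suppose $x^* \in \dom F$ is a d-stationary point of \eqref{problem:general}. Applying Proposition \ref{prop:PMFMcvx=>LFcvx} at $x^*$, I obtain a neighborhood $\mathcal{N}$ of $x^*$ such that for every $y \in \mathcal{N} \cap \dom F$,
\begin{equation}
y - x^* \in \mathcal{F}(x^*; \dom F) \quad \text{and} \quad F(y) \ge F(x^*) + F'(x^*; y - x^*).
\end{equation}
Because $x^*$ is d-stationary, $F'(x^*; d) \ge 0$ for every $d \in \mathcal{F}(x^*; \dom F)$; in particular $F'(x^*; y - x^*) \ge 0$ for each such $y$. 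Substituting into the displayed inequality yields $F(y) \ge F(x^*)$ for all $y \in \mathcal{N} \cap \dom F$. For $y \in \mathcal{N}$ with $y \notin \dom F$ the inequality $F(y) = \infty \ge F(x^*)$ holds trivially, so $F(y) \ge F(x^*)$ on all of $\mathcal{N}$, which is exactly local optimality of $x^*$.

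There is no real obstacle here; the only point worth being careful about is making explicit why one may apply the d-stationarity inequality at the direction $y - x^*$, namely that $y - x^* \in \mathcal{F}(x^*; \dom F)$, which is precisely the first conclusion provided by Proposition \ref{prop:PMFMcvx=>LFcvx}. All of the analytic content (handling the pointwise-minimum structure, switching to the active convex pieces $f_i$ with $i \in I^*(x)$, invoking the closedness of $\dom f_i$, and using the directional-derivative descent inequality from Lemma \ref{lem:d-derivative-descent-lemma}) has already been absorbed into the proof of the proposition, so the corollary is a short, routine deduction.
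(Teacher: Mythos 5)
Your deduction is correct and is exactly the argument the paper intends: the paper states the corollary as an immediate consequence of Proposition \ref{prop:PMFMcvx=>LFcvx}, and your combination of the neighborhood inequality $F(y)\ge F(x^*)+F'(x^*;y-x^*)$ with the d-stationarity bound $F'(x^*;y-x^*)\ge0$ (plus the trivial case $y\notin\dom F$) is that intended routine step. Nothing is missing.
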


Corollary \ref{cor:local-opt<=>d-stationary} ensures that under the assumption any d-stationary point is also locally optimal without information about second order derivatives.
Since a convex function is continuous and directionally differentiable at the interior points of its domain, it is sufficient to assume only convexity of $f_i$ so as to ensure the equivalence especially when $\dom f_i=\mathbb{E}$ for all $i$.
The minimization problem we will deal with in the next section often has this property.

Lastly, let us mention the relation to existing results. 
\citet[Proposition 2.1]{ahn2017difference} showed that a d-stationary point of \eqref{problem:general} must be a local minimizer if $F$ is given by
\begin{equation}
F(x)=G(x)-H(x)+\delta_\mathcal{C}(x),
\end{equation}
where $G:\mathbb{R}^n\to\mathbb{R}$ is convex, $H:\mathbb{R}^n\to\mathbb{R}$ is piecewise affine convex, and $\mathcal{C}\subset\mathbb{R}^n$ is closed convex.
By Theorem 2.49 of \citet{rockafellar2009variational}, there exist finitely many affine functions $\{h_i\}_{i\in[M]}$ such that $H(x)=\max_{i\in[M]}h_i(x)$.
Consequently, it holds that
\begin{equation}
F(x)=G(x)-\max_{i\in[M]}h_i(x)+\delta_\mathcal{C}(x)=\min_{i\in[M]}\left\{G(x)-h_i(x)+\delta_\mathcal{C}(x)\right\}.
\end{equation}
Since $G(x)-h_i(x)+\delta_\mathcal{C}(x)$ is continuous on its domain $\mathcal{C}$, is convex, and is directionally differentiable, we see that Corollary \ref{cor:local-opt<=>d-stationary} is a generalization of Proposition 2.1 of \citet{ahn2017difference}.
Besides, \citet[Proposition 4.1]{cui2020study} showed that a d-stationary point of \eqref{problem:general} must be a local minimizer if $F=G\circ H+\delta_\mathcal{C}$ where $G:\mathbb{R}^m\to\mathbb{R}$ is convex, $H:\mathbb{R}^n\to\mathbb{R}^m$ is piecewise affine, and $\mathcal{C}\subset\mathbb{R}^n$ is closed convex.
Note that there are a finite number of polyhedra $\{\mathcal{D}_i\}_{i\in[M]}$ and corresponding affine functions $\{h_i\}_{i\in[M]}$ such that $\mathbb{R}^n=\cup_{i\in[M]}\mathcal{D}_i$ and $H(x)=h_i(x)$ on $\mathcal{D}_i$ \citep[Proposition 4.2.1]{facchinei2003finite}.
Let $f_i(x)=G\circ h_i(x)+\delta_{\mathcal{D}_i\cap\mathcal{C}}(x)$, then it holds that
\begin{equation}
F(x)=\min_{i\in[M]}f_i(x),
\end{equation}
which implies that Proposition 4.1 of \citet{cui2020study} is a corollary of Corollary \ref{cor:local-opt<=>d-stationary}.

\section{Unconstrained Problem Penalized by Trimmed $\ell_1$ Norm}\label{sec:GTL}
In this section, we introduce a unified framework of trimmed $\ell_1$ penalized problem and show the equivalence with the corresponding structured sparsity constrained problem.

\subsection{Trimmed $\ell_1$ Norm}
Before describing the formulations, let us start with the key component that plays a significant role in those formulations. 
For $z=(z_1^\top,\ldots,z_m^\top)^\top\in\mathbb{R}^{mp}$ with $z_1,\ldots,z_m\in\mathbb{R}^p$, the trimmed group $\ell_1$ norm \citep{yagishita2024pursuit} is defined by
\begin{equation}\label{eq:trimmed-l1-norm}
T_{K,m,p}(z)\coloneqq\min_{\substack{\Lambda\subset[m]\\|\Lambda|=m-K}}\sum_{i\in\Lambda}\|z_i\|_2,
\end{equation}
where $K\in\{0,\ldots,m-1\}$.
It is not hard to see that \eqref{eq:trimmed-l1-norm} is equal to the sum of the smallest $m-K$ components of the $m$-dimensional vector $(\|z_1\|_2,\ldots,\|z_m\|_2)$.
Accordingly, the trimmed group $\ell_1$ norm with $K=0$ is the group $\ell_1$ norm, namely, $T_{0,m,p}(z)=\sum_{i=1}^{m}\|z_i\|_2$.
Note also that when $p=1$, the function $T_{K,m,p}$ is known as the trimmed $\ell_1$ norm, namely, the sum of the smallest $m-K$ components of a vector $(|z_1|,\ldots,|z_m|)$ \citep{luo2013new,huang2015two,gotoh2018dc}.
In the following, ``group'' in the term ``trimmed group $\ell_1$ norm'' is omitted for simplicity.

It is easy to see that $T_{K,m,p}(z)\ge0$ for any $z$, and that $T_{K,m,p}(z)=0$ if and only if
\begin{equation}\label{ineq:simple-card}
\left\|(\|z_1\|_2,\ldots,\|z_m\|_2)\right\|_0\le K.
\end{equation}
These facts motivate us to use the trimmed $\ell_1$ norm as a penalty function of the cardinality constraint \eqref{ineq:simple-card}.
The idea of using the trimmed $\ell_1$ penalty was first utilized by 
\citet{gotoh2018dc}, whereas the equivalence with the cardinality constraint was pointed out earlier in the proof of a proposition of \citet{hempel2014novel}.
In the following, we will use \eqref{eq:trimmed-l1-norm} as the penalty function as a surrogate of the corresponding cardinality constraints. 

Noting that the function $z\mapsto \sum_{i\in\Lambda}\|z_i\|_2$ is directionally differentiable on $\mathbb{R}^{mp}$ and Lemma \ref{lem:d-derivative-min}, we readily see that $T_{K,m,p}$ is directionally differentiable on $\mathbb{R}^{mp}$. 
A more specific expression than the one in Lemma \ref{lem:d-derivative-min} of the directional derivative of $T_{K,m,p}$ is given as follows.

\begin{lemma}[$\mbox{\citet[Lemma 11]{yagishita2024pursuit}}$]\label{lem:d-derivative-T_K}
Let $\Lambda_1=\{i\mid\|z_i\|_2<\|z_{(K)}\|_2\}$ and $\Lambda_2=\{i\mid\|z_i\|_2=\|z_{(K)}\|_2\}$. The directional derivative of $T_K$ at $z\in\mathbb{R}^{mp}$ in the direction $d\in\mathbb{R}^{mp}$ is given by
\begin{equation}\label{eq:d-diff}
T_{K,m,p}'(z;d)=\sum_{i\in\Lambda_1}\Delta(z_i;d_i)+\min_{\substack{\Lambda\subset\Lambda_2 \\ |\Lambda|=m-K-|\Lambda_1|}}\sum_{i\in\Lambda}\Delta(z_i;d_i),
\end{equation}
where
\begin{align}
\Delta(z_i;d_i)\coloneqq
\begin{cases}
 \frac{z_i^\top d_i}{\|z_i\|_2}, & z_i\neq0,\\
 \|d_i\|_2, & z_i=0
\end{cases}
\end{align}
and $\|z_{(K)}\|_2$ is the $K$th largest component of $(\|z_1\|_2,\ldots,\|z_m\|_2)$ if $K\in[m-1]$, $\|z_{(0)}\|_2=\infty$ if $K=0$.
\end{lemma}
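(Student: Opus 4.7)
The plan is to realize $T_{K,m,p}$ as the pointwise minimum of finitely many convex directionally differentiable functions and then apply Lemma \ref{lem:d-derivative-min}. Concretely, write
\begin{equation}
T_{K,m,p}(z) = \min_{\Lambda \in \mathcal{A}} f_\Lambda(z), \qquad f_\Lambda(z) \coloneqq \sum_{i\in\Lambda} \|z_i\|_2,
\end{equation}
with $\mathcal{A} = \{\Lambda \subset [m] : |\Lambda| = m-K\}$. Each $f_\Lambda$ is a finite sum of convex continuous norms, so it is convex, continuous, and directionally differentiable on the whole space, with $\dom f_\Lambda = \mathbb{R}^{mp}$ closed and convex; hence the hypotheses of Lemma \ref{lem:d-derivative-min} are satisfied.

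Next I would compute $f'_\Lambda(z;d)$ entrywise. The directional derivative of the map $u \mapsto \|u\|_2$ at $z_i$ in direction $d_i$ equals $z_i^\top d_i/\|z_i\|_2$ when $z_i \neq 0$ and $\|d_i\|_2$ when $z_i = 0$ (by positive homogeneity of the norm). Summing over $i \in \Lambda$ gives $f'_\Lambda(z;d) = \sum_{i\in\Lambda}\Delta(z_i;d_i)$. Because all $f_\Lambda$ share the full domain $\mathbb{R}^{mp}$, the index set $I(z,d)$ appearing in Lemma \ref{lem:d-derivative-min} equals $\mathcal{A}$, and the lemma yields
\begin{equation}
T'_{K,m,p}(z;d) = \min_{\Lambda \in I^*(z)} \sum_{i\in\Lambda}\Delta(z_i;d_i), \qquad I^*(z) \coloneqq \{\Lambda \in \mathcal{A} : f_\Lambda(z) = T_{K,m,p}(z)\}.
\end{equation}

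The third step is to identify $I^*(z)$ combinatorially. Since $T_{K,m,p}(z)$ is the sum of the $m-K$ smallest values in $(\|z_i\|_2)_{i\in[m]}$, a set $\Lambda \in \mathcal{A}$ attains the minimum if and only if its complement picks out $K$ indices of largest norm. Any $i$ with $\|z_i\|_2 > \|z_{(K)}\|_2$ must lie in the complement, while every $i \in \Lambda_1$ (where $\|z_i\|_2 < \|z_{(K)}\|_2$) must lie in $\Lambda$, and the threshold ties in $\Lambda_2$ can be split in any way that makes up the remaining slots. Hence $I^*(z)$ is precisely the family $\{\Lambda_1 \cup \Lambda' : \Lambda' \subset \Lambda_2,~|\Lambda'| = m-K-|\Lambda_1|\}$. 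Substituting this description splits the inner sum into a constant part over $\Lambda_1$ and a minimization over $\Lambda'$, giving the claimed formula \eqref{eq:d-diff}.

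The main obstacle is essentially the accounting in this last step: one must verify that $|\Lambda_1| \leq m-K$ and that the union of indices above and at the threshold is precisely the pool from which the top $K$ are drawn, so that $\Lambda = \Lambda_1 \cup \Lambda'$ with $\Lambda' \subset \Lambda_2$ exhausts $I^*(z)$. The remaining calculations are mechanical. Note that the convention $\|z_{(0)}\|_2 = \infty$ handles $K=0$ cleanly: then $\Lambda_1 = [m]$ and $\Lambda_2 = \emptyset$, reducing \eqref{eq:d-diff} to $\sum_{i\in[m]} \Delta(z_i;d_i)$, which is the directional derivative of the group $\ell_1$ norm $T_{0,m,p}$.
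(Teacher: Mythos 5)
Your proof is correct. Note that the paper itself does not prove this lemma---it is quoted from \citet[Lemma 11]{yagishita2024pursuit} (with the $p=1$ case attributed to \citet{amir2021trimmed})---so there is no in-paper argument to compare against; what you give is a self-contained derivation built entirely from the paper's own machinery, namely writing $T_{K,m,p}=\min_{\Lambda\in\mathcal{A}}f_\Lambda$ with $f_\Lambda(z)=\sum_{i\in\Lambda}\|z_i\|_2$ and invoking Lemma \ref{lem:d-derivative-min}, which applies since each $f_\Lambda$ is convex, continuous and directionally differentiable with $\dom f_\Lambda=\mathbb{R}^{mp}$, so that $I(z,d)=\mathcal{A}$ and only the active sets $I^*(z)$ survive. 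The one step that carries the content, the identification $I^*(z)=\{\Lambda_1\cup\Lambda'\mid \Lambda'\subset\Lambda_2,\ |\Lambda'|=m-K-|\Lambda_1|\}$, is argued correctly: any minimizing $\Lambda$ must contain every index with $\|z_i\|_2<\|z_{(K)}\|_2$ and exclude every index with $\|z_i\|_2>\|z_{(K)}\|_2$ (otherwise a swap strictly decreases the sum, using that at least $K$ indices satisfy $\|z_i\|_2\ge\|z_{(K)}\|_2$ and at most $K-1$ satisfy the strict inequality, which also gives $|\Lambda_1|\le m-K$), and the entrywise formula $\Delta(z_i;d_i)$ for the directional derivative of $\|\cdot\|_2$ together with the $K=0$ convention $\|z_{(0)}\|_2=\infty$ then yields \eqref{eq:d-diff} exactly as you state.
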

This representation for $p=1$ was given by \citet[Lemma SM2.6]{amir2021trimmed} and later extended to $p>1$ by \citet{yagishita2024pursuit}.
Lemma \ref{lem:d-derivative-T_K} will be used later in this section to show the equivalence between our proposed formulation and the corresponding structured sparsity constrained problem, both of which will be given below.

\subsection{Formulation of Generalized Trimmed Lasso}
We are now ready to present the formulation to be examined in this paper. 
With the trimmed $\ell_1$ norm, we consider the following unconstrained minimization problem, which we refer to as the {\it generalized trimmed lasso}:
\begin{equation}\label{problem:GTL}
\underset{x_0\in\mathbb{R}^{n_0},x_1\in\mathbb{R}^{n_1},\ldots,x_L\in\mathbb{R}^{n_L}}{\mbox{minimize}} \quad f(x_0,x_1,\ldots,x_L)+\sum_{l\in[L]} \gamma_lT_{K_l,m_l,p_l}(D_lx_l-c_l),
\end{equation}
where $\gamma_l>0,~ K_l\in\{0,\ldots,m_l-1\},~ c_l\in\mathbb{R}^{m_lp_l}$, and $D_l\neq 0$ is an $m_lp_l\times n_l$ matrix.
For simplicity, we ignore the subscripts when $L=1$ and $n_0=0$.
The first term of the objective function of \eqref{problem:GTL} is usually a loss function for fitting models to some data, and the second term is the sum of penalty functions of the corresponding structured sparsity constraints. 
The coefficients of the trimmed $\ell_1$ penalty terms, $\gamma_l$'s, look like hyperparameters to take the balance among the first term and the (multiple) trimmed $\ell_1$ penalties.
As will be discussed later in detail, however, those can be treated as exact penalties under some conditions and may not actually need to be calibrated.

Although the generalized trimmed lasso \eqref{problem:GTL} includes the nonconvex nonsmooth functions, $T_{K_l,m_l,p_l}$, in its objective, we can see that the equivalence between the d-stationarity and the local optimality holds if the first term is convex.    

\begin{proposition}\label{prop:local-opt<=>d-stationary-in-GTL}
d-stationary points and local minimizers of \eqref{problem:GTL} are equivalent if $f$ is convex.
\end{proposition}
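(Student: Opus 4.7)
The plan is to reduce Proposition \ref{prop:local-opt<=>d-stationary-in-GTL} directly to Corollary \ref{cor:local-opt<=>d-stationary}. Only the implication ``d-stationary $\Rightarrow$ local minimizer'' needs work, since the converse is always true and already noted in the text. To invoke the corollary, I need to exhibit the objective of \eqref{problem:GTL} as the pointwise minimum of finitely many convex, directionally differentiable functions with closed domains, and then check that this pointwise minimum is continuous on its domain.

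First I would unfold each trimmed $\ell_1$ term using its defining formula \eqref{eq:trimmed-l1-norm}: for each $l\in[L]$, $T_{K_l,m_l,p_l}(D_lx_l-c_l)=\min_{\Lambda_l}\sum_{i\in\Lambda_l}\|(D_lx_l-c_l)_i\|_2$ with $\Lambda_l$ ranging over subsets of $[m_l]$ of cardinality $m_l-K_l$. Because the minimum can be pulled outside a constant-in-$\Lambda$ summand and distributed over independent indices, the objective of \eqref{problem:GTL} equals $\min_{\Lambda_1,\ldots,\Lambda_L}f_{\Lambda_1,\ldots,\Lambda_L}(x_0,\ldots,x_L)$, where
\begin{equation}
f_{\Lambda_1,\ldots,\Lambda_L}(x_0,\ldots,x_L)\coloneqq f(x_0,\ldots,x_L)+\sum_{l\in[L]}\gamma_l\sum_{i\in\Lambda_l}\|(D_lx_l-c_l)_i\|_2.
\end{equation}
The number of such index tuples is $\prod_{l\in[L]}\binom{m_l}{m_l-K_l}<\infty$, so this is a pointwise minimum of finitely many functions.

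Next I would verify that each $f_{\Lambda_1,\ldots,\Lambda_L}$ fits the hypotheses of Proposition \ref{prop:PMFMcvx=>LFcvx}: it is convex (as a nonnegative combination of $f$, assumed convex, and norms composed with affine maps), hence continuous and directionally differentiable on the interior of its domain; its domain equals $\dom f$, which is closed; and with the standard convention that $f$ has full domain $\mathbb{R}^{n_0+\cdots+n_L}$ (otherwise the same argument runs on $\dom f$), continuity and directional differentiability hold everywhere. The pointwise minimum $F$ is then continuous on its domain as the minimum of finitely many continuous functions sharing the same closed domain.

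With all the hypotheses of Proposition \ref{prop:PMFMcvx=>LFcvx} met, Corollary \ref{cor:local-opt<=>d-stationary} yields that every d-stationary point of \eqref{problem:GTL} is locally optimal, which combined with the trivial converse gives the claimed equivalence. The main subtlety to be careful about is just the bookkeeping of the domain of $f$ (and of each $f_{\Lambda_1,\ldots,\Lambda_L}$), to ensure closedness and continuity on the common domain; everything else, including directional differentiability of the pieces, is routine for convex functions on finite-dimensional spaces.
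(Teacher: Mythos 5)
Your proposal is correct and follows essentially the same route as the paper: the paper's (one-line) proof likewise unfolds each $T_{K_l,m_l,p_l}$ via its defining minimum over index sets, writes the objective of \eqref{problem:GTL} as a pointwise minimum of finitely many convex functions, and invokes Corollary \ref{cor:local-opt<=>d-stationary}. Your extra care about $\dom f$ matches the paper's remark that convexity alone suffices when the pieces have full domain.
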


This proposition is readily from the definition of $T_{K,m,p}$ and Corollary \ref{cor:local-opt<=>d-stationary}.
The generalized trimmed lasso \eqref{problem:GTL} covers a lot of existing trimmed $\ell_1$ penalized problems \citep{luo2013new,huang2015two,gotoh2018dc,ahn2017difference,banjac2017novel,bertsimas2017trimmed,lu2018sparse,yun2019trimming,nakayama2021superiority,amir2021trimmed,yagishita2024pursuit,lu2023exact}, as follows.
Note that all the following examples satisfy the assumption of Proposition \ref{prop:local-opt<=>d-stationary-in-GTL}, namely, the convexity of $f$.

\begin{example}[Sparse linear regression \citep{luo2013new,huang2015two,gotoh2018dc,banjac2017novel,bertsimas2017trimmed,yun2019trimming,amir2021trimmed}]\label{ex:sparse-ols}
Let $b\in\mathbb{R}^q,~ A\in\mathbb{R}^{q\times n}$ denote the output vector and the design matrix for the ordinary least square estimation. For $\eta\ge0$, \citet{huang2015two} considered the following problem:
\begin{equation}\label{problem:sparse-ols}
\underset{x\in\mathbb{R}^n}{\mbox{minimize}} \quad \frac{1}{2}\|b-Ax\|_2^2+\eta\|x\|_1+\gamma T_{K,n,1}(x).
\end{equation}
The first two terms of \eqref{problem:sparse-ols} correspond to $f$ of \eqref{problem:GTL}. 
\citet{gotoh2018dc,banjac2017novel,yun2019trimming}, and \citet{amir2021trimmed} dealt with the problem \eqref{problem:sparse-ols} with $\eta=0$, while \citet{luo2013new} and \citet{bertsimas2017trimmed} dealt with the problem \eqref{problem:sparse-ols} with $\eta>0$.
To the best of our knowledge, the first penalized problems via the trimmed $\ell_1$ norm were proposed in this form independently by \citet{luo2013new,huang2015two}, and \citet{gotoh2018dc}.
\end{example}

As mentioned in the previous subsection, since the trimmed $\ell_1$ norm is reduced to the $\ell_1$ norm if $K=0$, the generalized trimmed lasso \eqref{problem:GTL} is also an extension of the generalized lasso \citep{tibshirani2011solution,she2010sparse}.
Hence, in addition to Example \ref{ex:sparse-ols}, the generalized trimmed lasso \eqref{problem:GTL} has many applications of trimmed $\ell_1$ penalty that have not yet been addressed.

\begin{example}[Sparse linear regression with power $1$ \citep{amir2021trimmed}]\label{ex:sparse-reg-p1}
For $b\in\mathbb{R}^q$ and $A\in\mathbb{R}^{q\times n}$ given in the same manner as in Example \ref{ex:sparse-ols}, \citet{amir2021trimmed} considered the following sparse regression problem with power $1$:
\begin{equation}\label{problem:sparse-reg-p1}
\underset{x\in\mathbb{R}^n}{\mbox{minimize}} \quad \|b-Ax\|_2+\gamma T_{K,n,1}(x).
\end{equation}
While only difference is in the first term of the objective from Example \ref{ex:sparse-ols}, this case satisfied the Lipschitz continuity.
\end{example}

\begin{example}[Sparse logistic regression \citep{gotoh2018dc,lu2018sparse,nakayama2021superiority}]\label{ex:sparse-logistic}
Let $b_j\in\{-1,1\},~ a_j\in\mathbb{R}^{n},~ j\in[q]$ denote, respectively, the binary label and attribute vector of the $j$th sample. To conduct the logistic regression with variable selection, \citet{gotoh2018dc} considered the following problem:
\begin{equation}\label{problem:sparse-logistic}
\underset{x\in\mathbb{R}^n}{\mbox{minimize}} \quad \sum_{j\in[q]}\log\left(1+\exp(-b_ja_j^\top x)\right)+\gamma T_{K,n,1}(x).
\end{equation}
The difference from Examples \ref{ex:sparse-ols} and \ref{ex:sparse-reg-p1} is only at the first term, called {\it logistic loss}, and this case has a Lipschitz gradient and Lipschitz continuity. 
\citet{lu2018sparse} and \citet{nakayama2021superiority} also treated this problem.
\end{example}

\begin{example}[Sparse SVM \citep{gotoh2018dc}]\label{ex:sparse-SVM}
For $b_j\in\{-1,1\},~ a_j\in\mathbb{R}^{n},~ j\in[q]$ given in the same manner as in Example \ref{ex:sparse-logistic}, \citet{gotoh2018dc} also considered the following problem:
\begin{equation}\label{problem:sparse-SVM}
\underset{x\in\mathbb{R}^n}{\mbox{minimize}} \quad \sum_{j\in[q]}\max\{1-b_ja_j^\top x,0\}+\gamma T_{K,n,1}(x).
\end{equation}
The first term, called hinge loss, is Lipschitz continuous but is not differentiable.
\end{example}

\begin{example}[Sparse robust regression \citep{nakayama2021superiority}]\label{ex:sparse-robust-reg}
Let $b\in\mathbb{R}^{n_2}$ and $A\in\mathbb{R}^{n_2\times n_1}$ be given in the same manner as in Examples \ref{ex:sparse-ols} and \ref{ex:sparse-reg-p1}, \citet{nakayama2021superiority} considered the following problem:
\begin{equation}\label{problem:sparse-robust-reg}
\underset{x_1\in\mathbb{R}^{n_1}, x_2\in\mathbb{R}^{n_2}}{\mbox{minimize}} \quad \frac{1}{2}\|b-Ax_1-x_2\|_2^2+\gamma_1 T_{K_1,n,1}(x_1)+\gamma_2 T_{K_2,q,1}(x_2).
\end{equation}
While the first trimmed $\ell_1$ penalty term is for variable selection, the second trimmed $\ell_1$ penalty is for removing the outlying samples. 
Here, the variable $x_2$ serves to absorb residuals of outliers.
\end{example}

\begin{example}[Minimizing constraints violation]\label{ex:MCV}
Let $b\in\mathbb{R}^n,~ c\in\mathbb{R}^m$ and $D\in\mathbb{R}^{m\times n}$ be a surjective matrix. We consider the following problem:
\begin{equation}\label{problem:MCV}
\underset{x\in\mathbb{R}^n}{\mbox{minimize}} \quad \frac{1}{2}\|b-x\|_2^2+\gamma T_{K,m,1}(Dx-c).
\end{equation}
\citet{li2015global} dealt with the corresponding cardinality constrained problem.
\end{example}

\begin{example}[Estimation of piece-wise linear trend]\label{ex:trend-filtering}
Let $b\in\mathbb{R}^n$ be a vector of time series data samples. 
To estimate a piecewise linear function to approximate the data points $b_1,...,b_n$, 
consider the following problem:
\begin{equation}\label{problem:trend-filtering}
\underset{x\in\mathbb{R}^n}{\mbox{minimize}} \quad \frac{1}{2}\|b-x\|_2^2+\gamma_1 T_{K,n-2,1}(D^{(2,n)}x),
\end{equation}
where $D^{(2,n)}\in\mathbb{R}^{(n-2)\times n}$ denotes the second-order difference matrix defined by
\begin{equation}
D^{(2,n)}\coloneqq
\begin{pmatrix}
1 & -2 & 1 & & & \\
& 1 & -2 & 1 & & \\
& & \ddots & \ddots & \ddots & \\
& & & 1 & -2 & 1 \\
\end{pmatrix}.
\end{equation}
The formulation \eqref{problem:trend-filtering} is an extension of $\ell_1$ trend filtering \citep{kim2009ell_1} by the trimmed $\ell_1$ norm.
\end{example}

The above examples are for $p=1$ only, but it is natural to extend to $p>1$ in each.
The network trimmed lasso \citep{yagishita2024pursuit} is such a kind and it 
pursues both clustering data samples and fitting the models to the samples simultaneously.
See \citet{yagishita2024pursuit} for more details.

It is a natural question to ask how the relationship between solutions of the generalized trimmed lasso and the following structured sparsity constrained problem:
\begin{align}
\underset{x_0\in\mathbb{R}^{n_0},x_1\in\mathbb{R}^{n_1},\ldots,x_L\in\mathbb{R}^{n_L}}{\mbox{minimize}} & \qquad f(x_0,x_1,\ldots,x_L) \label{obj:card}\\
\text{subject to} ~\quad & \qquad T_{K_l,m_l,p_l}(D_lx_l-c_l)=0, \quad l=1,\ldots,L. \label{const:card}
\end{align}

Due to the property explained above \eqref{ineq:simple-card}, any optimal (resp. locally optimal, d-stationary) solution of the generalized trimmed lasso \eqref{problem:GTL} satisfying the constraint \eqref{const:card} is optimal (resp. locally optimal, d-stationary) to the problem \eqref{obj:card}--\eqref{const:card}. 
On the other hand, as $\gamma_l$ is larger (while leaving the other components the same), the term $T_{K_l,m_l,p_l}(D_lx_l^*-c_l)$ is smaller for the obtained solution $(x_0^*,x_1^*,\ldots,x_L^*)$ of \eqref{problem:GTL}, and one might expect $T_{K_l,m_l,p_l}(D_lx_l^*-c_l)=0$ holds for a sufficiently large $\gamma_l$. 
In fact, for special cases of generalized trimmed lasso, several studies \citep{gotoh2018dc,ahn2017difference,bertsimas2017trimmed,nakayama2021superiority,amir2021trimmed,yagishita2024pursuit,lu2023exact} have derived thresholds $\overline{\gamma}_l$ such that if $\gamma_l>\overline{\gamma}_l$, then any globally optimal (or locally optimal) solution $(x_0^*,x_1^*,\ldots,x_L^*)$ of \eqref{problem:GTL} satisfies $T_{K_l,m_l,p_l}(D_lx_l^*-c_l)=0$. 
Once such $\overline{\gamma}_l$'s are calculated for all $l\in[L]$, a structured sparse solution satisfying \eqref{const:card} can be obtained by solving the generalized trimmed lasso \eqref{problem:GTL} with $\gamma_l>\overline{\gamma}_l$ for $l\in[L]$. 
Such $\overline{\gamma}_l$'s are called {\it exact penalty parameters}.

Assumptions of the existing results are briefly summarized in Table \ref{table:previous-result}\footnote{In the table, $b\in\mathbb{R}^q,~ A\in\mathbb{R}^{q\times n},~ \eta>0$, and ``boundedness'' means boundedness of the set of solutions.
Since $c=0$ is supposed in all the previous works, it is omitted in the table.
The results of \citet{amir2021trimmed} and \citet{yagishita2024pursuit} were valid at local minimizers, but since both of them assumed the convexity of $f$, we see from Proposition \ref{prop:local-opt<=>d-stationary-in-GTL} that it is essentially the same as the result at d-stationary points.
In \citet{yagishita2024pursuit}, the result was also given at globally optimal solutions without assuming the convexity of $f$.}.
The weakness of the previous results can be concisely summarized as follows:
\begin{itemize}
\item Since the trimmed $\ell_1$ norm is a nonconvex function (unless $K=0$), it is generally difficult to ensure the global optimality in practice, and the implication of obtained solutions for the corresponding cardinality constrained problems would be unclear even when the exact penalty parameters at globally optimal solutions such as in the result \cite{gotoh2018dc,bertsimas2017trimmed,nakayama2021superiority,lu2023exact} are employed. 
\item At first glance, the propositions in the papers \citep{gotoh2018dc,ahn2017difference,nakayama2021superiority,yagishita2024pursuit} seem to cover a number of cases, but they all assume the ``boundedness'' of the set of solutions and it is not necessarily fulfilled or difficult to ensure the fulfillment in many cases. 
\item The convexity of $f$ is assumed in all the existing studies for establishing the exact penalty result at locally optimal solutions.
\end{itemize}

To mitigate the drawback in the first bullet, we will show the exact penalties at $d$-stationary points. 
As for the second and third bullets, we will remove or relax the assumptions. 
In addition, since it seems difficult to apply the proof approaches used to derive the existing results to the generalized trimmed lasso \eqref{problem:GTL}, we will build a novel unified approach for the generalized framework.

\begin{table}
  \caption{Assumptions of previous works.}\small
  \label{table:previous-result}
  \centering
  \begin{tabular}{|l||c|c|c|c|c|} \hline
   & solution & $L$ & $f$ & $D$ & other \\ \hline\hline
   \citet{gotoh2018dc} & optimal & 1 & $M$-smooth & identity matrix & boundedness \\ \hline
   \citet{ahn2017difference} & d-stationary & 1 & $M$-smooth, convex & identity matrix & boundedness \\ \hline
   \citet{bertsimas2017trimmed} & optimal & 1 & $\frac{1}{2}\|b-Ax\|_2^2+\eta\|x\|_1$ & identity matrix & none \\ \hline
   \citet{nakayama2021superiority} & optimal & 2 & $M$-smooth & identity matrices & boundedness \\ \hline
   \citet{amir2021trimmed} & d-stationary & 1 & $\frac{1}{2}\|b-Ax\|_2^2,~ \|b-Ax\|_2$ & identity matrix & none \\ \hline
   \citet{yagishita2024pursuit} & d-stationary & 1 & $M$-smooth, convex & specific non-identity & boundedness \\ \hline
   \citet{lu2023exact} & optimal & 1 & Lipschitz continuous & identity matrix & none \\ \hline
  \end{tabular}
\end{table}

\subsection{Exact Penalties for Generalized Trimmed Lasso}
In the remainder of this section, we derive exact penalty parameters at d-stationary points of the generalized trimmed lasso \eqref{problem:GTL} under milder conditions than the existing results.
Since our proof does not rely on the convexity of $f$, we see that the assumption of convexity is not essential in deriving exact penalty parameters at practically attainable solutions (d-stationary points in our analysis) of the generalized lasso \eqref{problem:GTL}.
In addition, we will see later that our results are stronger than some existing ones, even though the proof is for a more general formulation. 

To derive exact penalty parameters for the generalized trimmed lasso \eqref{problem:GTL}, we present the following lemma.
For $\Lambda\subset[m]$ and $D=\left((D)_1^\top,\ldots,(D)_m^\top\right)^\top\neq 0$ with $(D)_i\in\mathbb{R}^{p\times n}$, we denote the submatrix leaving only $(D)_i$ corresponding to $\Lambda$ by $(D)_\Lambda$, and the positive constant depending on $D$ is defined as follows:
\begin{align}
\sigma_{K,m,p}(D)\coloneqq
\begin{cases}
\qquad\qquad\qquad\qquad\qquad\sigma_{\min}(D), & \mbox{if $D$ is surjective},\\
\displaystyle\min\{\sigma_{\min}((D)_\Lambda)\mid\Lambda\subset[m], |\Lambda|=m-K, (D)_\Lambda\neq 0\}, & \mbox{if $D$ is not surjective}.
\end{cases}
\end{align}
We remark that $\sigma_{K,m,p}(I)=\sigma_{\min}(I)=1$.

\begin{lemma}\label{lem:epp-GTL}
Let $f$ be directionally differentiable and $(x_0^*,x_1^*,\ldots,x_L^*)$ be a d-stationary point of \eqref{problem:GTL}.
Suppose that the following assumptions hold for $l\in[L]$:
\begin{enumerate}[({A}1)]
\item There is an $\overline{x}_l\in\mathbb{R}^{n_l}$ such that $D_l\overline{x}_l-c_l=0$;
\item There exists $\Gamma_l>0$ such that
\begin{align}\label{ineq:bounded-cond-GTL}
\sup_{\|d_l\|_2=1}f'(x_0^*,\ldots,x_L^*;0,\ldots,d_l,\ldots,0)\le\Gamma_l.
\end{align}
\end{enumerate}
Then, $(x_0^*,x_1^*,\ldots,x_L^*)$ satisfies $T_{K_l,m_l,p_l}(D_lx_l^*-c_l)=0$ if
\begin{align}\label{ineq:epp-GTL}
\gamma_l>\frac{1}{\sigma_{K_l,m_l,p_l}(D_l)}\Gamma_l
\end{align}
holds.
When $p_l=1,~ D_l=I,~ c_l=0$, and $f(x_0,x_1,\ldots,x_L)=g(x_0,x_1,\ldots,x_L)+\sum_{l\in[L]}\eta_l\|x_l\|_1$ with $\eta_l\ge0$ hold, the left hand side of \eqref{ineq:bounded-cond-GTL} can be replaced by
\begin{align}
\sup_{\substack{\|d_l\|_1=1\\ d_l\in\{-1,0,1\}^{n_l}}}g'(x_0^*,\ldots,x_L^*;0,\ldots,d_l,\ldots,0)-\eta_l.
\end{align}
\end{lemma}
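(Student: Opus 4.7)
The plan is a proof by contradiction: assume $T_{K_l,m_l,p_l}(D_lx_l^*-c_l)>0$ for some $l$, and exhibit a block direction $(0,\ldots,d_l,\ldots,0)$ along which $F$ has a strictly negative directional derivative, contradicting d-stationarity. Because only the $l$-th block moves, the other trimmed penalties contribute $0$ to $F'$, so the whole argument reduces to two estimates: an upper bound on $f'$ in that direction and an upper bound on $T_{K_l,m_l,p_l}'(z_l^*;D_ld_l)$, where $z_l^*\coloneqq D_lx_l^*-c_l$.

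First I would fix $\Lambda^*\subset[m_l]$ with $|\Lambda^*|=m_l-K_l$ attaining the minimum in the definition of $T_{K_l,m_l,p_l}(z_l^*)$. Assumption (A1) yields $z_l^*=D_l(x_l^*-\overline{x}_l)$, so $(z_l^*)_{\Lambda^*}$ lies in the image of $(D_l)_{\Lambda^*}$, and it also forces $(D_l)_{\Lambda^*}\neq 0$ (otherwise $c_l$, and thus $z_l^*$, would vanish on $\Lambda^*$, contradicting $T_{K_l,m_l,p_l}(z_l^*)>0$). Picking $w_1\in\ker((D_l)_{\Lambda^*})^\perp$ with $(D_l)_{\Lambda^*}w_1=(z_l^*)_{\Lambda^*}$, set $d_l\coloneqq -w_1$. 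Lemma \ref{lem:d-derivative-T_K} then gives $T_{K_l,m_l,p_l}'(z_l^*;D_ld_l)\le -T_{K_l,m_l,p_l}(z_l^*)$: the inner minimum over $\Lambda\subset\Lambda_2$ is bounded above by its value at the admissible candidate $\Lambda^*\setminus\Lambda_1$, and on all of $\Lambda^*$ each $\Delta((z_l^*)_i;(D_ld_l)_i)$ equals $-\|(z_l^*)_i\|_2$ by construction. On the $f$-side, (A2) and positive homogeneity give $f'(\ldots;0,\ldots,d_l,\ldots,0)\le\Gamma_l\|w_1\|_2$, and Lemma \ref{lem:restricted bounded below} combined with $\|(z_l^*)_{\Lambda^*}\|_2\le T_{K_l,m_l,p_l}(z_l^*)$ gives $\|w_1\|_2\le T_{K_l,m_l,p_l}(z_l^*)/\sigma_{K_l,m_l,p_l}(D_l)$. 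Adding everything, the total directional derivative is at most $(\Gamma_l/\sigma_{K_l,m_l,p_l}(D_l)-\gamma_l)\,T_{K_l,m_l,p_l}(z_l^*)$, which is negative under \eqref{ineq:epp-GTL}.

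For the special case $p_l=1$, $D_l=I$, $c_l=0$, $f=g+\sum_{l'}\eta_{l'}\|x_{l'}\|_1$, the $w_1$-based direction need not lie in $\{-1,0,1\}^{n_l}$, so I would instead take $d_l=-\sgn(x_{l,i^*}^*)e_{i^*}$ for some $i^*\in\Lambda^*$ with $x_{l,i^*}^*\neq 0$ (such an $i^*$ exists because $T_{K_l,n_l,1}(x_l^*)>0$). Then $\|d_l\|_1=1$ and $d_l\in\{-1,0,1\}^{n_l}$, Lemma \ref{lem:d-derivative-T_K} gives $T_{K_l,n_l,1}'(x_l^*;d_l)=-1$, the $\ell_1$ term contributes $-\eta_l$, and the substituted hypothesis bounds $g'(\ldots;0,\ldots,d_l,\ldots,0)\le\Gamma_l+\eta_l$, yielding the same contradiction via $\sigma_{K_l,n_l,1}(I)=1$.

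The main obstacle is the trimmed-norm d-derivative bookkeeping: I need to verify both that $\Lambda^*\setminus\Lambda_1\subset\Lambda_2$ has the correct cardinality $m_l-K_l-|\Lambda_1|$ (so it is admissible in the inner minimum of Lemma \ref{lem:d-derivative-T_K}) and that the non-triviality $(D_l)_{\Lambda^*}\neq 0$ always holds under (A1), so that $\sigma_{\min}((D_l)_{\Lambda^*})\ge\sigma_{K_l,m_l,p_l}(D_l)$. Once these combinatorial checks are in place, the rest is a clean combination of the restricted singular value inequality and positive homogeneity of the directional derivative.
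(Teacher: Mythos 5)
Your overall strategy is the paper's: derive a contradiction from d-stationarity by exhibiting a block direction along which the trimmed term decreases at rate $-T_{K_l,m_l,p_l}(D_lx_l^*-c_l)$ while $f'$ is controlled by (A2), using Lemma \ref{lem:d-derivative-T_K}, Lemma \ref{lem:restricted bounded below}, and positive homogeneity; your special-case argument with $d_l=-\sgn((x_l^*)_{i^*})e_{i^*}$ is exactly the paper's. The combinatorial bookkeeping you flag is fine: for any minimizing $\Lambda^*$ one has $\Lambda_1\subset\Lambda^*$, $\Lambda^*\setminus\Lambda_1\subset\Lambda_2$ with $|\Lambda^*\setminus\Lambda_1|=m_l-K_l-|\Lambda_1|$, so $T_{K_l,m_l,p_l}'(z_l^*;D_ld_l)\le\sum_{i\in\Lambda^*}\Delta((z_l^*)_i;(D_ld_l)_i)=-T_{K_l,m_l,p_l}(z_l^*)$ with your choice of $d_l=-w_1$, and $(D_l)_{\Lambda^*}\neq0$ does follow from (A1) together with $T_{K_l,m_l,p_l}(z_l^*)>0$.

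The gap is in the single step $\sigma_{\min}((D_l)_{\Lambda^*})\ge\sigma_{K_l,m_l,p_l}(D_l)$, which you present as an automatic consequence of $(D_l)_{\Lambda^*}\neq0$. That is true by definition only in the non-surjective branch, where $\sigma_{K_l,m_l,p_l}(D_l)$ is the minimum of $\sigma_{\min}((D_l)_\Lambda)$ over nonzero submatrices with $|\Lambda|=m_l-K_l$. When $D_l$ is surjective, the lemma's constant is $\sigma_{K_l,m_l,p_l}(D_l)=\sigma_{\min}(D_l)$, and the inequality $\sigma_{\min}((D_l)_{\Lambda^*})\ge\sigma_{\min}(D_l)$ is a nontrivial claim about row-submatrices: it holds, but you need an argument, e.g.\ that $(D_l)_{\Lambda^*}(D_l)_{\Lambda^*}^\top$ is a principal submatrix of $D_lD_l^\top\succ0$, so by the variational characterization (Cauchy interlacing) $\lambda_{\min}\bigl((D_l)_{\Lambda^*}(D_l)_{\Lambda^*}^\top\bigr)\ge\lambda_{\min}(D_lD_l^\top)=\sigma_{\min}(D_l)^2$, whence $(D_l)_{\Lambda^*}$ has full row rank and the claimed bound follows. (Note that the naive route fails: $\|(D_l)_{\Lambda^*}d\|_2\le\|D_ld\|_2$ points the wrong way, and $\ker((D_l)_{\Lambda^*})^\perp\subset\ker(D_l)^\perp$ alone does not give the bound.) The paper avoids this issue by treating the surjective case separately: there it uses surjectivity to prescribe $D_ld$ on \emph{all} blocks (equal to $-(z_l^*)_i$ on $\Lambda_1\cup\Lambda$ and $0$ elsewhere) with $d\in\ker(D_l)^\perp$, and then applies Lemma \ref{lem:restricted bounded below} to the full matrix $D_l$ rather than to a submatrix. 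So either add the interlacing argument to justify your unified construction, or split off the surjective case as the paper does; with that repaired, your proof is complete and otherwise matches the paper's.
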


\begin{proof}
To derive a contradiction, we assume that $T_{K_l,m_l,p_l}(D_lx_l^*-c_l)>0$.
It holds that
\begin{equation}
f'(x_0^*,\ldots,x_L^*;0,\ldots,d_l,\ldots,0)+\gamma_lT_{K_l,m_l,p_l}'(D_lx_l^*-c_l;D_ld_l)\ge0
\end{equation}
for any $d_l\in\mathbb{R}^{n_l}$ because of the d-stationarity of $(x_1^*,\ldots,x_L^*)$.
By Lemma \ref{lem:d-derivative-T_K}, we see that
\begin{equation}\label{ineq:f-o-condition}
f'(x_0^*,\ldots,x_L^*;0,\ldots,d_l,\ldots,0)+\gamma_l\sum_{i\in\Lambda_1\cup\Lambda}\Delta((D_lx_l^*-c_l)_i;(D_ld_l)_i)\ge0
\end{equation}
for some $\Lambda\subset\Lambda_2$ such that $|\Lambda|=m_l-K_l-|\Lambda_1|$ and any $d_l\in\mathbb{R}^{n_l}$.
Note that $(D_lx_l^*-c_l)_i\neq0$ holds for some $i\in\Lambda_1\cup\Lambda$, since $T_{K_l,m_l,p_l}(D_lx_l^*-c_l)>0$ is assumed.
We will divide the case into (i) $D_l$ is surjective, and (ii) $D_l$ is not surjective.

\noindent Case (i): 
Since $D_l$ is surjective, there exists $d\in\ker(D_l)^\perp$ such that
\begin{align}
(D_ld)_i=
\begin{cases}
 -(D_lx_l^*-c_l)_i, & i\in\Lambda_1\cup\Lambda,\\
 0, & i\notin\Lambda_1\cup\Lambda.
\end{cases}
\end{align}
Note that $d\neq0$, because $(D_lx_l^*-c_l)_i\neq0$ holds for some $i\in\Lambda_1\cup\Lambda$.
The inequality \eqref{ineq:f-o-condition} with $d_l=d$ yields
\begin{align}
\begin{split}
f'(x_0^*,\ldots,x_L^*;0,\ldots,d,\ldots,0) &\ge\gamma_l\sum_{i\in\Lambda_1\cup\Lambda}\|(D_ld)_i\|_2\\
&=\gamma_l\sum_{i\in[m_l]}\|(D_ld)_i\|_2\\
&\ge\gamma_l\left(\sum_{i\in[m_l]}\|(D_ld)_i\|_2^2\right)^\frac{1}{2}\\
&=\gamma_l\|D_ld\|_2\\
&\ge\gamma_l\sigma_{\min}(D_l)\|d\|_2\\
&=\gamma_l\sigma_{K_l,m_l,p_l}(D_l)\|d\|_2,
\end{split}
\end{align}
where the third inequality follows from Lemma \ref{lem:restricted bounded below} and the third equality from the definition of $\sigma_{K_l,m_l,p_l}$.
Then, from the positive homogeneity of the directional derivative and the Assumption (A2), we have
\begin{equation}
\gamma_l\le\frac{1}{\sigma_{K_l,m_l,p_l}(D_l)}f'\left(x_0^*,\ldots,x_L^*;0,\ldots,\frac{d}{\|d\|_2},\ldots,0\right)\le\frac{1}{\sigma_{K_l,m_l,p_l}(D_l)}\Gamma_l,
\end{equation}
which contradicts the inequality \eqref{ineq:epp-GTL}.

\noindent Case (ii):
By the Assumption (A1), $D_l(x_l^*-\overline{x}_l)=D_lx_l^*-c_l$ holds.
Let $d\in\ker((D_l)_{\Lambda_1\cup\Lambda})^\perp$ be the projection of $-(x_l^*-\overline{x}_l)$ onto $\ker((D_l)_{\Lambda_1\cup\Lambda})^\perp$, then we have
\begin{align}
(D_ld)_{\Lambda_1\cup\Lambda}=(D_l)_{\Lambda_1\cup\Lambda}d=-(D_l)_{\Lambda_1\cup\Lambda}(x_l^*-\overline{x}_l)=(-D_l(x_l^*-\overline{x}_l))_{\Lambda_1\cup\Lambda}=(-(D_lx_l^*-c_l))_{\Lambda_1\cup\Lambda}.
\end{align}
Note that $(D_l)_{\Lambda_1\cup\Lambda}\neq 0$ and $d\neq0$, because $(D_lx_l^*-c_l)_i\neq0$ holds for some $i\in\Lambda_1\cup\Lambda$.
The inequality \eqref{ineq:f-o-condition} with $d_l=d$ yields
\begin{align}
\begin{split}
f'(x_0^*,\ldots,x_L^*;0,\ldots,d,\ldots,0) &\ge\gamma_l\sum_{i\in\Lambda_1\cup\Lambda}\|(D_ld)_i\|_2\\
&\ge\gamma_l\left(\sum_{i\in\Lambda_1\cup\Lambda}\|(D_ld)_i\|_2^2\right)^\frac{1}{2}\\
&=\gamma_l\|(D_ld)_{\Lambda_1\cup\Lambda}\|_2\\
&=\gamma_l\|(D_l)_{\Lambda_1\cup\Lambda}d\|_2\\
&\ge\gamma_l\sigma_{\min}((D_l)_{\Lambda_1\cup\Lambda})\|d\|_2\\
&\ge\gamma_l\sigma_{K_l,m_l,p_l}(D_l)\|d\|_2,
\end{split}
\end{align}
where the third inequality follows from Lemma \ref{lem:restricted bounded below} and the fourth one from the definition of $\sigma_{K_l,m_l,p_l}$.
The contradiction to \eqref{ineq:epp-GTL} is derived just as when $D_l$ is surjective.

Lastly, we consider the case where $p_l=1,~ D_l=I,~ c_l=0$, and $f(x_0,x_1,\ldots,x_L)=g(x_0,x_1,\ldots,x_L)+\sum_{l\in[L]}\eta_l\|x_l\|_1$ with $\eta_l\ge0$ hold.
We assume that $T_{K_l,n_l,1}(x_l^*)>0$.
By Lemma \ref{lem:d-derivative-T_K} and the d-stationarity of $(x_1^*,\ldots,x_L^*)$, it holds that
\begin{equation}\label{ineq:f-o-condition-2}
g'(x_0^*,\ldots,x_L^*;0,\ldots,d_l,\ldots,0)+\eta_l\sum_{i\in[n_l]}\Delta((x_l^*)_i;(d_l)_i)+\gamma_l\sum_{i\in\Lambda_1\cup\Lambda}\Delta((x_l^*)_i;(d_l)_i)\ge0
\end{equation}
for some $\Lambda\subset\Lambda_2$ such that $|\Lambda|=n_l-K_l-|\Lambda_1|$ and any $d_l\in\mathbb{R}^{n_l}$.
Note that there exists $i'\in\Lambda_1\cup\Lambda$ such that $(x_l^*)_{i'}\neq0$ because $T_{K_l,n_l,1}(x_l^*)>0$ is assumed.
Substituting $d=-\frac{(x_l^*)_{i'}}{|(x_l^*)_{i'}|}e_{i'}$ back into \eqref{ineq:f-o-condition-2} leads to
\begin{equation}
g'\left(x_0^*,\ldots,x_L^*;0,\ldots,-\frac{(x_l^*)_{i'}}{|(x_l^*)_{i'}|}e_{i'},\ldots,0\right)+(\eta_l+\gamma_l)\left(-\frac{(x_l^*)_{i'}(x_l^*)_{i'}}{|(x_l^*)_{i'}||(x_l^*)_{i'}|}\right)\ge0.
\end{equation}
By this inequality, we obtain that
\begin{equation}
\gamma_l\le\sup_{\substack{\|d_l\|_1=1\\ d_l\in\{-1,0,1\}^{n_l}}}g'(x_0^*,\ldots,x_L^*;0,\ldots,d_l,\ldots,0)-\eta_l\le\Gamma_l,
\end{equation}
which contradicts the inequality $\gamma_l>\Gamma_l$.
\end{proof}

Note that the Assumption (A1) is established whenever $c_l=0$ or $D_l$ is surjective.
On the other hand, considering the case where $L=1$, $n_0=0$ and $f$ is differentiable, we have
\begin{equation}
\sup_{\|d\|_2=1}f'(x^*;d)=\sup_{\|d\|_2=1}\nabla f(x^*)^\top d=\|\nabla f(x^*)\|_2.
\end{equation}
From this, we see that the Assumption (A2) can be interpreted as ``a boundedness assumption for the gradient of $f$ at stationary points,'' and is a natural one.
\citet{banjac2017novel} found a condition similar to the Assumption (A2) in a limited case, but they were not aware that the boundedness of the gradient of $f$ at local minimizers can guarantee the existence of the exact penalty parameter.
No propositions have revealed such an essential and intuitive assumption on $f$ in the previous papers \citep{gotoh2018dc,ahn2017difference,bertsimas2017trimmed,nakayama2021superiority,amir2021trimmed,yagishita2024pursuit,lu2023exact}.
We will now use Lemma \ref{lem:epp-GTL} to derive exact penalty parameters of the generalized trimmed lasso \eqref{problem:GTL}.

\begin{theorem}\label{thm:epp-smooth-bounded}
Suppose that the Assumption (A1) holds and $f$ is $M$-smooth. 
Let $(x_0^*,x_1^*,\ldots,x_L^*)$ be a d-stationary point of \eqref{problem:GTL} and $C_l>0$ be constants such that $\|x_l^*\|_2\le C_l$ for all $l=0,\ldots,L$.
Then, $(x_0^*,x_1^*,\ldots,x_L^*)$ satisfies the constraint \eqref{const:card} if
\begin{equation}\label{ineq:epp-smooth-bounded}
\gamma_l>\frac{1}{\sigma_{K_l,m_l,p_l}(D_l)}\left(\|\nabla_{x_l}f(0,\ldots,0)\|_2+M\sqrt{\sum_{l=0}^LC_l^2}\right)
\end{equation}
holds for all $l\in[L]$.
The right hand side of \eqref{ineq:epp-smooth-bounded} can be replaced by $\|\nabla_{x_l}f(0,\ldots,0)\|_\infty+M\sqrt{\sum_{l=0}^LC_l^2}$ when $p_l=1,~ D_l=I$, and $c_l=0$ hold.
\end{theorem}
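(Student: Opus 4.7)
The plan is to verify the hypotheses of Lemma \ref{lem:epp-GTL}---which is already proved in the paper---and then read the result off directly. Assumption (A1) is hypothesized, so the only work is to identify a valid constant $\Gamma_l$ in (A2) and then invoke \eqref{ineq:epp-GTL}.

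First, since $f$ is continuously differentiable, for any direction $d_l \in \mathbb{R}^{n_l}$ we have
\begin{equation}
f'(x_0^*,\ldots,x_L^*;0,\ldots,d_l,\ldots,0) = \nabla_{x_l} f(x_0^*,\ldots,x_L^*)^\top d_l,
\end{equation}
so
\begin{equation}
\sup_{\|d_l\|_2 = 1} f'(x_0^*,\ldots,x_L^*;0,\ldots,d_l,\ldots,0) = \|\nabla_{x_l} f(x_0^*,\ldots,x_L^*)\|_2.
\end{equation}
Next, I use $M$-smoothness to compare this quantity to the gradient at the origin: by the Lipschitz condition on $\nabla f$ together with the triangle inequality,
\begin{equation}
\|\nabla_{x_l} f(x_0^*,\ldots,x_L^*)\|_2 \le \|\nabla_{x_l} f(0,\ldots,0)\|_2 + \|\nabla f(x_0^*,\ldots,x_L^*) - \nabla f(0,\ldots,0)\|_2,
\end{equation}
where I used the elementary fact that the $2$-norm of a subvector is at most the $2$-norm of the whole vector. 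The $M$-smooth bound then gives
\begin{equation}
\|\nabla f(x_0^*,\ldots,x_L^*) - \nabla f(0,\ldots,0)\|_2 \le M \sqrt{\sum_{l=0}^L \|x_l^*\|_2^2} \le M \sqrt{\sum_{l=0}^L C_l^2}.
\end{equation}
Thus (A2) holds with $\Gamma_l := \|\nabla_{x_l} f(0,\ldots,0)\|_2 + M \sqrt{\sum_{l=0}^L C_l^2}$, and applying Lemma \ref{lem:epp-GTL} yields $T_{K_l,m_l,p_l}(D_l x_l^* - c_l) = 0$ under \eqref{ineq:epp-smooth-bounded}, which is exactly \eqref{const:card}.

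For the refined bound when $p_l = 1$, $D_l = I$, $c_l = 0$, I invoke the second statement of Lemma \ref{lem:epp-GTL} with $g = f$ and $\eta_l = 0$. The relevant supremum is
\begin{equation}
\sup_{\substack{\|d_l\|_1 = 1\\ d_l \in \{-1,0,1\}^{n_l}}} \nabla_{x_l} f(x_0^*,\ldots,x_L^*)^\top d_l = \|\nabla_{x_l} f(x_0^*,\ldots,x_L^*)\|_\infty,
\end{equation}
attained at the signed coordinate vector corresponding to a maximal-magnitude entry. The same triangle inequality estimate, together with $\|\cdot\|_\infty \le \|\cdot\|_2$, bounds this by $\|\nabla_{x_l} f(0,\ldots,0)\|_\infty + M \sqrt{\sum_{l=0}^L C_l^2}$, giving the stated replacement.

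There is no real obstacle in this argument; the substance has already been absorbed into Lemma \ref{lem:epp-GTL}. The only thing to be mindful of is keeping track of which norm appears where (the $2$-norm versus $\infty$-norm of the partial gradient in the two cases), and that the partial gradient norm is controlled by the full gradient norm so that a single global $M$-smoothness constant suffices.
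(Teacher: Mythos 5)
Your proof is correct and follows essentially the same route as the paper: verify Assumption (A2) with $\Gamma_l=\|\nabla_{x_l}f(0,\ldots,0)\|_2+M\sqrt{\sum_{l=0}^LC_l^2}$ via the Cauchy--Schwarz inequality, the triangle inequality, and $M$-smoothness, then invoke Lemma \ref{lem:epp-GTL}, with the $\ell_\infty$ refinement obtained from the lemma's second statement (taking $g=f$, $\eta_l=0$) exactly as the paper does.
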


\begin{proof}
By the $M$-smoothness of $f$, we have for all $d_l$ such that $\|d_l\|_2=1$ that 
\begin{align}
f'(x_0^*,\ldots,x_L^*;0,\ldots,d_l,\ldots,0) &=\nabla_{x_l}f(x_0^*,\ldots,x_L^*)^\top d_l\\
&\le\|\nabla_{x_l}f(x_0^*,\ldots,x_L^*)\|_2\\
&\le\|\nabla_{x_l}f(0,\ldots,0)\|_2+\|\nabla_{x_l}f(x_0^*,\ldots,x_L^*)-\nabla_{x_l}f(0,\ldots,0)\|_2\\
&\le\|\nabla_{x_l}f(0,\ldots,0)\|_2+M\|(x_0^*,\ldots,x_L^*)\|_2\\
&\le\|\nabla_{x_l}f(0,\ldots,0)\|_2+M\sqrt{\sum_{l=0}^LC_l^2},
\end{align}
where the first inequality follows from the Cauchy-Schwarz inequality, the second one from the triangle inequality, the third one from the $M$-smoothness of $f$, and the fourth one from the assumption $\|x_l^*\|_2\le C_l$.
Thus, the assumption \emph{(A2)} holds with $\Gamma_l=\|\nabla_{x_l}f(0,\ldots,0)\|_2+M\sqrt{\sum_{l=0}^LC_l^2}$.
From Lemma \ref{lem:epp-GTL}, we have the desired result.
When $p_l=1,~ D_l=I$, and $c_l=0$ hold, since we can evaluate as
\begin{align}
\nabla_{x_l}f(x_0^*,\ldots,x_L^*)^\top d_l &\le\|\nabla_{x_l}f(x_0^*,\ldots,x_L^*)\|_\infty\\
&\le\|\nabla_{x_l}f(0,\ldots,0)\|_\infty+\|\nabla_{x_l}f(x_0^*,\ldots,x_L^*)-\nabla_{x_l}f(0,\ldots,0)\|_2\\
&\le\|\nabla_{x_l}f(0,\ldots,0)\|_\infty+M\sqrt{\sum_{l=0}^LC_l^2}
\end{align}
for all $d_l$ such that $\|d_l\|_1=1$ and $d_l\in\{-1,0,1\}^{n_l}$, from Lemma \ref{lem:epp-GTL}, the right hand side of \eqref{ineq:epp-smooth-bounded} can be replaced.
\end{proof}

We compare Theorem \ref{thm:epp-smooth-bounded} for $L=1,~ n_0=0,~ p=1,~ D=I$, and $c=0$ with the results of \citet[Theorem 3]{gotoh2018dc} and \citet[Theorem 5.1]{ahn2017difference}\footnote{Looking at \citet{ahn2017difference}, it may appear at first glance that their result is a different claim, but with the appropriate parameter transformations, we can see that their result is the same one.}.
\begin{list}{}{}
\item[In \citep{gotoh2018dc}:] Assume that $f$ is $M$-smooth, $L=1,~ n_0=0,~ p=1,~ D=I$, and $c=0$.
Let $x^*$ be an optimal solution of \eqref{problem:GTL} and $C>0$ be a constant such that $\|x^*\|_2\le C$.
Then, $x^*$ satisfies the constraint \eqref{const:card} if $\gamma>\|\nabla f(0)\|_2+\frac{3}{2}MC$
holds.
\item[In \citep{ahn2017difference}:] Assume that $f$ is $M$-smooth and convex, $L=1,~ n_0=0,~ p=1,~ D=I$, and $c=0$.
Let $x^*$ be a d-stationary point (i.e., locally optimal solution) of \eqref{problem:GTL} and $C>0$ be a constant such that $\|x^*\|_2\le C$.
Then, $x^*$ satisfies the constraint \eqref{const:card} if $\gamma>(\sqrt{K+1}-\sqrt{K})^{-1}\left(\|\nabla f(0)\|_2+MC\right)$ holds.
\end{list}
Our exact penalty parameter $\|\nabla f(0)\|_\infty+MC$ is not only derived under weaker assumptions for each case, but also smaller than both.
Then, we also make a comparison with the results of \citet[Theorem 2]{nakayama2021superiority}\footnote{Looking at \citet{nakayama2021superiority}, it may appear at first glance that their result is a slightly stronger.
However, since their argument is a bit wrong, their result is the same one essentially.}.
\begin{list}{}{}
\item[In \citep{nakayama2021superiority}:] 
Let $(x_1^*,x_2^*)$ be an optimal solution of \eqref{problem:GTL} with $L=2,~ n_0=0,~ p_1=p_2=1,~ D_1=I,~ D_2=I,~ c_1=0$, and $c_2=0$.
Assume that $f$ is $M$-smooth, and there exist $C_1,~ C_2>0$ such that $\|x_1^*\|_2\le C_1$ and $\|x_2^*\|_2\le C_2$.
Then, $x^*$ satisfies the constraint \eqref{const:card} if $\gamma_1>\|\nabla_{x_1} f(0,0)\|_2+M\left(\frac{3}{2}C_1+C_2\right)$ and $\gamma_2>\|\nabla_{x_2} f(0,0)\|_2+M\left(C_1+\frac{3}{2}C_2\right)$ hold.
\end{list}
We see that our exact penalty parameters from Theorem \ref{thm:epp-smooth-bounded} are $\|\nabla_{x_1} f(0,0)\|_\infty+M\sqrt{C_1^2+C_2^2}$ and $\|\nabla_{x_2} f(0,0)\|_\infty+M\sqrt{C_1^2+C_2^2}$, which are smaller than $\|\nabla_{x_1} f(0,0)\|_2+M\left(C_1+C_2\right)$ and $\|\nabla_{x_2} f(0,0)\|_2+M\left(C_1+C_2\right)$, respectively, indicating a stronger result than \citet{nakayama2021superiority}.

Although Theorem \ref{thm:epp-smooth-bounded} is a stronger result than previous studies, it seems difficult to guarantee ``boundedness'' without assuming, for example, the strong convexity on $f$.
In particular, $f$ is always not strongly convex in Example \ref{ex:sparse-robust-reg}.
Motivated by this, below we derive exact penalty parameters under two more practical assumptions.
We first consider assumptions that include the case where $f$ is non-differentiable.

\begin{theorem}\label{thm:epp-Lipschitz}
Suppose that the Assumption (A1) holds, and that $f$ is directionally differentiable and Lipschitz continuous with constant $M$ under the $\ell_2$ norm.
Then, any d-stationary point $(x_0^*,x_1^*,\ldots,x_L^*)$ of \eqref{problem:GTL} satisfies the constraint \eqref{const:card} if
\begin{align}\label{ineq:epp-Lipschitz}
\gamma_l>\frac{1}{\sigma_{K_l,m_l,p_l}(D_l)}M
\end{align}
holds for all $l\in[L]$.
When $p_l=1,~ D_l=I,~ c_l=0$ for all $l\in[L]$, and $f$ is Lipschitz continuous with constant $M'$ under the $\ell_1$ norm, the right hand side of \eqref{ineq:epp-Lipschitz} can be replaced by $M'$.
\end{theorem}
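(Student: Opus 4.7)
The plan is to reduce this theorem to Lemma \ref{lem:epp-GTL} by verifying that Assumption (A2) holds automatically with $\Gamma_l = M$ whenever $f$ is $M$-Lipschitz continuous in the $\ell_2$ norm, and similarly with $\Gamma_l = M'$ in the $\ell_1$ Lipschitz case. Since Assumption (A1) is taken as given, the only work is to produce the quantitative directional-derivative bound needed to invoke Lemma \ref{lem:epp-GTL}.

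First, I would apply Lemma \ref{lem:d-derivative-lipschitz} to the function $f$. Since $f$ is directionally differentiable and Lipschitz continuous with constant $M$ in the $\ell_2$ norm on its domain $\mathbb{R}^{n_0}\times\cdots\times\mathbb{R}^{n_L}$, the lemma yields
\begin{equation}
\left|f'(x_0^*,\ldots,x_L^*;0,\ldots,d_l,\ldots,0)\right|\le M\,\|(0,\ldots,d_l,\ldots,0)\|_2 = M\,\|d_l\|_2
\end{equation}
for every $d_l\in\mathbb{R}^{n_l}$. Restricting to $\|d_l\|_2=1$ gives exactly Assumption (A2) of Lemma \ref{lem:epp-GTL} with $\Gamma_l=M$. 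Therefore, whenever $\gamma_l>M/\sigma_{K_l,m_l,p_l}(D_l)$ for every $l\in[L]$, the conclusion of Lemma \ref{lem:epp-GTL} forces $T_{K_l,m_l,p_l}(D_lx_l^*-c_l)=0$ for each $l$, which is precisely the constraint \eqref{const:card}.

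For the second assertion, assume $p_l=1$, $D_l=I$, $c_l=0$ for all $l$, and that $f$ is Lipschitz with constant $M'$ under the $\ell_1$ norm. The same reasoning (Lemma \ref{lem:d-derivative-lipschitz} applied with the $\ell_1$ norm) gives
\begin{equation}
\left|f'(x_0^*,\ldots,x_L^*;0,\ldots,d_l,\ldots,0)\right|\le M'\,\|d_l\|_1
\end{equation}
for every $d_l\in\mathbb{R}^{n_l}$. Writing $f=g+\sum_l\eta_l\|x_l\|_1$ with $\eta_l=0$ and $g=f$, the alternative version of (A2) in Lemma \ref{lem:epp-GTL} (the one that restricts to sign-vector directions with $\|d_l\|_1=1$) is bounded by $M'$, so the ``$p_l=1,\,D_l=I,\,c_l=0$'' branch of Lemma \ref{lem:epp-GTL} delivers the improved threshold $\gamma_l>M'$.

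There is really no obstacle: the theorem is a corollary of Lemma \ref{lem:epp-GTL} once one observes that Lipschitz continuity automatically furnishes the directional-derivative bound required by (A2). The only subtle point is to make sure the right norm (and correspondingly the right branch of Lemma \ref{lem:epp-GTL}) is used in each of the two parts of the statement, and to recall that the norm of the padded direction $(0,\ldots,d_l,\ldots,0)$ coincides with $\|d_l\|$ for both $\|\cdot\|_2$ and $\|\cdot\|_1$, so no extra constants appear.
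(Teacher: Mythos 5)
Your proposal is correct and follows essentially the same route as the paper: apply Lemma \ref{lem:d-derivative-lipschitz} to obtain Assumption (A2) with $\Gamma_l=M$ (respectively $\Gamma_l=M'$ under the $\ell_1$ norm, taking $g=f$ and $\eta_l=0$ for the special branch), then invoke Lemma \ref{lem:epp-GTL}. No gaps.
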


\begin{proof}
By the Lipschitz continuity of $f$ and Lemma \ref{lem:d-derivative-lipschitz}, we have for all $x_1\in\mathbb{R}^{n_1},\ldots,x_L\in\mathbb{R}^{n_L}$ and $\|d_l\|_2=1$ that  
\begin{align}
f'(x_0,\ldots,x_L;0,\ldots,d_l,\ldots,0)\le M\|(0,\ldots,d_l,\ldots,0)\|_2=M,
\end{align}
which implies that the Assumption (A2) holds with $\Gamma_l=M$.
From Lemma \ref{lem:epp-GTL}, we obtain the desired result.
When $p_l=1,~ D_l=I,~ c_l=0$ for all $l\in[L]$, and $f$ is Lipschitz continuous with constant $M'$ under the $\ell_1$ norm, Lemma \ref{lem:d-derivative-lipschitz} implies that
\begin{equation}
f'(x_0,\ldots,x_L;0,\ldots,d_l,\ldots,0)\le M'\|(0,\ldots,d_l,\ldots,0)\|_1=M'
\end{equation}
for all $d_l$ such that $\|d_l\|_1=1$ and $d_l\in\{-1,0,1\}^{n_l}$.
Thus, from Lemma \ref{lem:epp-GTL}, the right hand side of \eqref{ineq:epp-Lipschitz} can be replaced by $M'$.
\end{proof}

Theorem \ref{thm:epp-Lipschitz} for $L=1,~ n_0=0,~ p=1,~ D=I$, and $c=0$ is stronger than the result by \citet[Corollary 3.4]{lu2023exact} for the unconstrained case in the sense that our result is for d-stationary points whereas their result is for global minimizers.
From Theorem \ref{thm:epp-Lipschitz}, finding the Lipschitz constant of $f$ yields exact penalty parameters for Example \ref{ex:sparse-reg-p1} (sparse linear regression with power 1), Example \ref{ex:sparse-logistic} (sparse logistic regression), and Example \ref{ex:sparse-SVM} (sparse SVM).
The proofs are straightforward and thus omitted.

\begin{corollary}[exact penalty parameter for Example \ref{ex:sparse-reg-p1}: sparse linear regression with power 1]\label{cor:sparse-reg-p1}
Let $x^*$ be a d-stationary point of \eqref{problem:sparse-reg-p1}.
If $\gamma>\max_{j\in[n]}\|a_j\|_2$ holds, then $x^*$ satisfies $T_{K,n,1}(x^*)=0$, where $a_j$ denotes the $j$th column of $A$.
\end{corollary}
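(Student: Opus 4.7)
The plan is to invoke Theorem \ref{thm:epp-Lipschitz} directly, specializing to the case $L=1$, $n_0=0$, $p=1$, $D=I$, $c=0$, for which the cleaner $\ell_1$-Lipschitz bound $M'$ applies. The first step is to check the two hypotheses: Assumption (A1) is trivial since $c=0$ allows $\overline{x}=0$, and directional differentiability of $f(x)=\|b-Ax\|_2$ on $\mathbb{R}^n$ is immediate because $f$ is convex (as a composition of the convex $\ell_2$ norm with an affine map), hence directionally differentiable everywhere in its domain.

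The substantive step is to produce the Lipschitz constant of $f$ with respect to the $\ell_1$ norm. By the reverse triangle inequality,
\begin{equation}
|f(x)-f(y)| = \bigl|\|b-Ax\|_2-\|b-Ay\|_2\bigr| \le \|A(x-y)\|_2.
\end{equation}
Writing $A(x-y)=\sum_{j\in[n]}(x-y)_j a_j$, the triangle inequality and H\"older's inequality give
\begin{equation}
\|A(x-y)\|_2 \le \sum_{j\in[n]}|(x-y)_j|\,\|a_j\|_2 \le \left(\max_{j\in[n]}\|a_j\|_2\right)\|x-y\|_1.
\end{equation}
Hence $f$ is Lipschitz continuous under the $\ell_1$ norm with constant $M'=\max_{j\in[n]}\|a_j\|_2$.

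Applying the second (i.e., $\ell_1$-Lipschitz) statement of Theorem \ref{thm:epp-Lipschitz} with this $M'$, every d-stationary point $x^*$ of \eqref{problem:sparse-reg-p1} satisfies $T_{K,n,1}(x^*)=0$ whenever $\gamma>\max_{j\in[n]}\|a_j\|_2$, which is exactly the claim. There is no real obstacle here; the only thing worth being careful about is using the $\ell_1$-Lipschitz variant (rather than the $\ell_2$-Lipschitz one, which would yield the worse bound $\|A\|_2$), since the identity $D=I$ enables the stronger column-wise bound.
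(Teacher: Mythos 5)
Your proof is correct and follows exactly the route the paper intends: the paper omits the proof as ``straightforward,'' meaning precisely the application of the $\ell_1$-Lipschitz variant of Theorem \ref{thm:epp-Lipschitz} after computing $M'=\max_{j\in[n]}\|a_j\|_2$ via $\|A(x-y)\|_2\le\sum_{j}|(x-y)_j|\|a_j\|_2$. Your verification of Assumption (A1) (with $\overline{x}=0$ since $c=0$) and of directional differentiability via convexity, as well as your remark that the $\ell_1$-Lipschitz bound beats the $\ell_2$ one, are all consistent with the paper's argument.
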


\begin{corollary}[Exact penalty parameter for Example \ref{ex:sparse-logistic}: sparse logistic regression]\label{cor:sparse-logistic}
Let $x^*$ be a d-stationary point of \eqref{problem:sparse-logistic}.
If $\gamma>\sum_{j\in[q]} \|a_j\|_\infty$ holds, then $x^*$ satisfies $T_{K,n,1}(x^*)=0$.
\end{corollary}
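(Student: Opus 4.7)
The plan is to derive the corollary as a direct application of Theorem \ref{thm:epp-Lipschitz} (the $\ell_1$-refined version, since the problem instantiates the pattern $p=1,~D=I,~c=0$ with $L=1$ and $n_0=0$). The only non-trivial work is to verify the Lipschitz constant of the logistic loss $f(x)\coloneqq\sum_{j\in[q]}\log\bigl(1+\exp(-b_ja_j^\top x)\bigr)$ under the $\ell_1$ norm and check that $M'=\sum_{j\in[q]}\|a_j\|_\infty$ works; Assumption (A1) is automatic because $c=0$.

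First I would compute the gradient componentwise. A direct differentiation yields
\begin{equation}
\nabla f(x)=-\sum_{j\in[q]}\frac{b_j a_j}{1+\exp(b_ja_j^\top x)},
\end{equation}
so each scalar weight $\frac{b_j}{1+\exp(b_ja_j^\top x)}$ lies in $[-1,1]$ since $b_j\in\{-1,1\}$ and the logistic sigmoid takes values in $(0,1)$. The triangle inequality in $\ell_\infty$ then gives
\begin{equation}
\|\nabla f(x)\|_\infty\le\sum_{j\in[q]}\left|\frac{b_j}{1+\exp(b_ja_j^\top x)}\right|\cdot\|a_j\|_\infty\le\sum_{j\in[q]}\|a_j\|_\infty
\end{equation}
uniformly in $x$.

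Next, since $f$ is continuously differentiable with a bounded gradient, the dual-norm characterization of Lipschitz continuity (applied to the $\ell_1/\ell_\infty$ pair) implies
\begin{equation}
|f(x)-f(y)|\le\Bigl(\sup_{z}\|\nabla f(z)\|_\infty\Bigr)\,\|x-y\|_1\le\left(\sum_{j\in[q]}\|a_j\|_\infty\right)\|x-y\|_1
\end{equation}
for all $x,y\in\mathbb{R}^n$. Thus $f$ is Lipschitz continuous under the $\ell_1$ norm with constant $M'=\sum_{j\in[q]}\|a_j\|_\infty$. Invoking the final clause of Theorem \ref{thm:epp-Lipschitz} (which permits replacing the right-hand side of \eqref{ineq:epp-Lipschitz} by $M'$ precisely in the $p_l=1,~D_l=I,~c_l=0$ regime present here), the hypothesis $\gamma>\sum_{j\in[q]}\|a_j\|_\infty$ guarantees that every d-stationary point $x^*$ of \eqref{problem:sparse-logistic} satisfies $T_{K,n,1}(x^*)=0$.

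There is no real obstacle: the only thing that could be overlooked is using the weaker Euclidean Lipschitz bound (which would give a larger, looser threshold such as $\sqrt{q}\max_j\|a_j\|_2$ or similar). The key is to exploit the $\ell_1$-version of Theorem \ref{thm:epp-Lipschitz}, and correspondingly bound $\|\nabla f\|_\infty$ rather than $\|\nabla f\|_2$, which tightens the constant to the sharp value $\sum_{j\in[q]}\|a_j\|_\infty$.
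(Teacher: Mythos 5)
Your proof is correct and follows exactly the route the paper intends (the paper omits the proof, noting only that one computes the Lipschitz constant of the logistic loss and applies Theorem \ref{thm:epp-Lipschitz}): you bound $\|\nabla f(x)\|_\infty\le\sum_{j\in[q]}\|a_j\|_\infty$ uniformly, conclude $\ell_1$-Lipschitz continuity with constant $M'=\sum_{j\in[q]}\|a_j\|_\infty$ via the dual-norm argument, and invoke the $p=1,~D=I,~c=0$ clause of that theorem, with Assumption (A1) holding since $c=0$. No gaps.
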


\begin{corollary}[Exact penalty parameter for Example \ref{ex:sparse-SVM}: sparse SVM]\label{cor:sparse-SVM}
Let $x^*$ be a d-stationary point of \eqref{problem:sparse-SVM}.
If $\gamma>\sum_{j\in[q]} \|a_j\|_\infty$ holds, then $x^*$ satisfies $T_{K,n,1}(x^*)=0$.
\end{corollary}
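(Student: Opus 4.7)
The plan is to view Example \ref{ex:sparse-SVM} as a particular instance of the generalized trimmed lasso \eqref{problem:GTL} with $L=1$, $n_0=0$, $p=1$, $D=I$, and $c=0$, and then invoke the second half of Theorem \ref{thm:epp-Lipschitz}, which tells us that for this configuration the exact penalty threshold is just the $\ell_1$-Lipschitz constant of the smooth-loss part $f$. So the proof reduces to producing a valid Lipschitz constant for the hinge loss $f(x)=\sum_{j\in[q]}\max\{1-b_ja_j^\top x,0\}$ under the $\ell_1$ norm, and checking that the mild preconditions of Theorem \ref{thm:epp-Lipschitz} are met.

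The assumption (A1) of Lemma \ref{lem:epp-GTL} is automatic because $c=0$ (take $\overline{x}=0$). Directional differentiability of $f$ is also immediate: $f$ is a finite sum of functions of the form $\max\{\alpha-\beta^\top x,0\}$, each piecewise affine and hence directionally differentiable on all of $\mathbb{R}^n$. Thus the only substantive step is the Lipschitz estimate.

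For the Lipschitz bound, I would use that $t\mapsto\max\{1-t,0\}$ is $1$-Lipschitz on $\mathbb{R}$, that $|b_j|=1$, and H\"older's inequality $|a_j^\top(x-y)|\le\|a_j\|_\infty\|x-y\|_1$. Summing over $j$ yields
\begin{equation}
|f(x)-f(y)|\le\sum_{j\in[q]}\bigl|a_j^\top(x-y)\bigr|\le\Bigl(\sum_{j\in[q]}\|a_j\|_\infty\Bigr)\|x-y\|_1,
\end{equation}
so $f$ is $\ell_1$-Lipschitz with constant $M'=\sum_{j\in[q]}\|a_j\|_\infty$. Applying Theorem \ref{thm:epp-Lipschitz} with this $M'$ gives that any d-stationary point $x^*$ of \eqref{problem:sparse-SVM} satisfies $T_{K,n,1}(x^*)=0$ whenever $\gamma>M'$, which is exactly the stated bound.

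I do not expect any genuine obstacle. The only subtlety worth double-checking is that the hinge loss is indeed directionally differentiable at every point and in every direction, including at the kink points $b_ja_j^\top x=1$; this follows from Lemma \ref{lem:d-derivative-min} applied to $\min\{0,\,b_ja_j^\top x-1\}$ (equivalently, $-\max\{1-b_ja_j^\top x,0\}$ up to a sign, handled by noting each $\max$ is a pointwise \emph{maximum} of two affine functions, hence convex and directionally differentiable everywhere). Once that is noted, the argument is a one-line application of the theorem.
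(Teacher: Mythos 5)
Your proposal is correct and follows exactly the route the paper intends: the paper omits the proof as straightforward, noting it follows from Theorem \ref{thm:epp-Lipschitz} once the $\ell_1$-norm Lipschitz constant of the hinge loss is identified, and your computation $M'=\sum_{j\in[q]}\|a_j\|_\infty$ via the $1$-Lipschitzness of $t\mapsto\max\{1-t,0\}$, $|b_j|=1$, and H\"older is precisely that step. The verification of (A1) with $\overline{x}=0$ and of directional differentiability (each hinge term being a finite convex piecewise affine function) is also exactly what the theorem requires, so no further argument is needed.
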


Corollary \ref{cor:sparse-reg-p1} is equivalent to the result by \citet[Theorem 2.2]{amir2021trimmed}.
We note that it is the first time that exact penalty parameters of \eqref{problem:sparse-logistic} and \eqref{problem:sparse-SVM} are given concretely.
Next, we derive an exact penalty parameter for one of the most commonly used loss functions, which is not Lipschitz continuous.

\begin{theorem}\label{thm:epp-square}
Suppose that the Assumption (A1) holds.
\begin{enumerate}[(a)]
\item Assume that $f(x_0,x_1,\ldots,x_L)=\frac{1}{2}\|b-\sum_{l=0}^LA_lx_l\|_2^2$, where $b\in\mathbb{R}^q, A_l\in\mathbb{R}^{q\times n_l}$.
Then, $(x_0^*,x_1^*,\ldots,x_L^*)$ satisfies the constraint \eqref{const:card} if $\gamma_l>\frac{1}{\sigma_{K_l,m_l,p_l}(D_l)}\|A_l\|_2\left\|b-\sum_{l\in[L]}A_l\overline{x}_l\right\|_2$ holds for all $l\in[L]$.
\item Assume that $p_l=1,~ D_l=I,~ c_l=0$, and $f(x_1,\ldots,x_L)=\frac{1}{2}\|b-\sum_{l\in[L]}A_lx_l\|_2^2+\sum_{l\in[L]}\eta_l\|x_l\|_1$ hold, where $b\in\mathbb{R}^q, A_l\in\mathbb{R}^{q\times n_l}, \eta_l\ge0$.
Then, $(x_1^*,\ldots,x_L^*)$ satisfies the constraint \eqref{const:card} if $\gamma_l>\max_{j\in[n_l]}\|a_j^{(l)}\|_2\left\|b\right\|_2-\eta_l$ holds for all $l\in[L]$, where $a_j^{(l)}$ denotes the $j$th column of $A_l$.
\end{enumerate}
\end{theorem}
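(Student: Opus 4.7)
The plan is to invoke Lemma~\ref{lem:epp-GTL} in both parts, so the task reduces to verifying Assumption~(A2). Denote the residual $r \coloneqq b - \sum_k A_k x_k^*$ (with $k$ ranging over $\{0,\ldots,L\}$ in part (a) and over $[L]$ in part (b)). Since $f$ is smooth quadratic in part (a), $\nabla_{x_l} f(x^*) = -A_l^\top r$ and Cauchy--Schwarz gives
\[
\sup_{\|d_l\|_2=1} f'(x^*; 0,\ldots,d_l,\ldots,0) = \|A_l^\top r\|_2 \le \|A_l\|_2 \|r\|_2,
\]
so it suffices to bound $\|r\|_2$; this is the heart of both parts.

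For part (a), I would extract two consequences of d-stationarity of $(x_0^*,\ldots,x_L^*)$. First, testing against $(d_0,0,\ldots,0)$: moving only in $x_0$ leaves every $D_l x_l^* - c_l$ fixed, so the trimmed $\ell_1$ directional derivatives vanish, and $F'(x^*;(d_0,0,\ldots,0))\ge0$ for all $d_0$ forces $A_0^\top r = 0$. Second, testing against $d \coloneqq (0, \bar x_1 - x_1^*,\ldots, \bar x_L - x_L^*)$: Assumption~(A1) gives $D_l(\bar x_l - x_l^*) = -(D_l x_l^* - c_l)$, and reading off Lemma~\ref{lem:d-derivative-T_K} in the ``contracting'' direction $-z$ yields $\Delta(z_i; -z_i) = -\|z_i\|_2$ on each nonzero active index, while zero-indices contribute nothing, collapsing the expression to
\[
T'_{K_l,m_l,p_l}\bigl(D_l x_l^* - c_l;\, -(D_l x_l^* - c_l)\bigr) = -T_{K_l,m_l,p_l}(D_l x_l^* - c_l).
\]
D-stationarity then reads $-r^\top \sum_{l=1}^L A_l(\bar x_l - x_l^*) \ge \sum_l \gamma_l T_{K_l,m_l,p_l}(D_l x_l^* - c_l) \ge 0$; substituting $\sum_{l=1}^L A_l x_l^* = b - A_0 x_0^* - r$ and using $A_0^\top r = 0$ rearranges this to $\|r\|_2^2 \le r^\top\bigl(b - \sum_{l=1}^L A_l \bar x_l\bigr)$, from which Cauchy--Schwarz yields $\|r\|_2 \le \|b - \sum_{l=1}^L A_l \bar x_l\|_2$. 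Plugging $\Gamma_l = \|A_l\|_2 \|b - \sum_{l=1}^L A_l \bar x_l\|_2$ into Lemma~\ref{lem:epp-GTL} closes part (a).

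For part (b), I would use the refined form of (A2) in Lemma~\ref{lem:epp-GTL} for $\ell_1$-regularized objectives. With $g(x) = \tfrac12\|b - \sum_l A_l x_l\|_2^2$ and $d_l \in \{-1,0,1\}^{n_l}$, $\|d_l\|_1=1$, the supremum is $\max_j |(a_j^{(l)})^\top r| - \eta_l \le \max_j \|a_j^{(l)}\|_2 \|r\|_2 - \eta_l$, so I only need $\|r\|_2 \le \|b\|_2$. Since $c_l = 0$, take $\bar x_l = 0$ and test d-stationarity against $d = (-x_1^*,\ldots, -x_L^*)$: the quadratic part contributes $r^\top b - \|r\|_2^2$, each $\|x_l\|_1$ contributes $-\|x_l^*\|_1$ (by the analogous sign collapse for the $\ell_1$ norm), and each $T_{K_l,n_l,1}(x_l^*)$ contributes $-T_{K_l,n_l,1}(x_l^*)$. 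Nonnegativity of $F'(x^*;d)$ yields $\|r\|_2^2 \le r^\top b \le \|r\|_2 \|b\|_2$, so $\|r\|_2 \le \|b\|_2$; since $\sigma_{K_l,n_l,1}(I) = 1$, Lemma~\ref{lem:epp-GTL} with $\Gamma_l = \max_j \|a_j^{(l)}\|_2 \|b\|_2 - \eta_l$ delivers the claim.

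The single non-routine step is the identity $T'_{K,m,p}(z;-z) = -T_{K,m,p}(z)$, which requires that Lemma~\ref{lem:d-derivative-T_K} collapses exactly (not merely as an inequality) along the direction $-z$: the minimization over $\Lambda \subset \Lambda_2$ becomes irrelevant because each $\Delta(z_i;-z_i) = -\|z_i\|_2$ combines across $\Lambda_1 \cup \Lambda$ to give precisely $-T_{K,m,p}(z)$. Once this identity is isolated, the remainder is algebra built around $r = b - \sum_k A_k x_k^*$ together with the two d-stationary inequalities, and an appeal to Lemma~\ref{lem:epp-GTL}.
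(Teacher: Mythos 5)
Your proposal is correct and takes essentially the same route as the paper: both reduce the theorem to verifying Assumption (A2) of Lemma \ref{lem:epp-GTL} and obtain the needed residual bound by testing d-stationarity along the direction toward the feasible reference point $\overline{x}$ together with the identity $T'_{K,m,p}(z;-z)=-T_{K,m,p}(z)$. The only minor difference is that the paper converts the stationarity inequality into the value comparison $f(0,\overline{x}_1,\ldots,\overline{x}_L)\ge f(x_0^*,\ldots,x_L^*)$ via convexity (Lemma \ref{lem:d-derivative-descent-lemma}), whereas you expand the quadratic gradient directly, using the additional (correctly derived) fact $A_0^\top r=0$ and Cauchy--Schwarz to reach the same bound.
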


\begin{proof}
We first show the statement (a).
The equation $D_l(x_l^*-\overline{x}_l)=D_lx_l^*-c_l$ holds for any $l\in[L]$ by the Assumption (A1).
By the d-stationarity of $(x_1^*,\ldots,x_L^*)$ and Lemma \ref{lem:d-derivative-T_K} with direction $-(x_0^*,x_1^*-\overline{x}_1,\ldots,x_L^*-\overline{x}_L)$, we have
\begin{align}
\begin{split}\label{ineq:pre-descent}
&f'(x_0^*,x_1^*,\ldots,x_L^*;-x_0^*,\overline{x}_1-x_1^*,\ldots,\overline{x}_L-x_L^*)-\sum_{l=1}^L \gamma_lT_{K_l,m_l,p_l}(D_lx_l^*-c_l)\\
&=f'(x_0^*,x_1^*,\ldots,x_L^*;-x_0^*,\overline{x}_1-x_1^*,\ldots,\overline{x}_L-x_L^*)+\sum_{l=1}^L \gamma_lT'_{K_l,m_l,p_l}(D_lx_l^*-c_l;-(D_lx_l^*-c_l))\\
&=f'(x_0^*,x_1^*,\ldots,x_L^*;-x_0^*,\overline{x}_1-x_1^*,\ldots,\overline{x}_L-x_L^*)+\sum_{l=1}^L \gamma_lT'_{K_l,m_l,p_l}(D_lx_l^*-c_l;D_l(-(x_l^*-\overline{x}_l)))\\
&\ge0.
\end{split}
\end{align}
Since $f$ is convex, combining the inequality \eqref{ineq:pre-descent}, Lemma \ref{lem:d-derivative-descent-lemma}, and non-negativity of $T_{K_l,m_l,p_l}$ yields
\begin{equation}
f(0,\overline{x}_1,\ldots,\overline{x}_L)\ge f(x_0^*,x_1^*,\ldots,x_L^*),
\end{equation}
that is, $\|b-\sum_{l\in[L]}A_l\overline{x}_l\|_2\ge\|b-\sum_{l=0}^LA_lx_l^*\|_2$.
Then, for all $d_l$ such that $\|d_l\|_2=1$, we obtain
\begin{align}
f'(x_0^*,\ldots,x_L^*;0,\ldots,d_l,\ldots,0) &=\nabla_{x_l}f(x_0^*,\ldots,x_L^*)^\top d_l\\
&\le\|\nabla_{x_l}f(x_0^*,\ldots,x_L^*)\|_2\\
&=\left\|A_l^\top\left(b-\sum_{l=0}^LA_lx_l^*\right)\right\|_2\\
&\le\|A_l\|_2\left\|b-\sum_{l=0}^LA_lx_l^*\right\|_2\\
&\le\|A_l\|_2\left\|b-\sum_{l\in[L]}A_l\overline{x}_l\right\|_2,
\end{align}
where the first inequality follows from the Cauchy-Schwarz inequality.
Thus, the Assumption (A2) holds with $\Gamma_l=\|A_l\|_2\|b-\sum_{l\in[L]}A_l\overline{x}_l\|_2$.
From Lemma \ref{lem:epp-GTL}, we have the desired result.

To show the statement (b), we note that $\|b-\sum_{l=0}^LA_lx_l^*\|_2\le\|b\|_2$ can be obtained in a similar way as in the proof of statement (a).
Then, we can evaluate as
\begin{align}
\nabla_{x_l}g(x_0^*,\ldots,x_L^*)^\top d_l &\le\|\nabla_{x_l}g(x_0^*,\ldots,x_L^*)\|_\infty\\
&=\max_{j\in[n_l]}\left|{a_j^{(l)}}^\top\left(b-\sum_{l=0}^LA_lx_l^*\right)\right|\\
&\le\max_{j\in[n_l]}\|a_j^{(l)}\|_2\left\|b-\sum_{l=0}^LA_lx_l^*\right\|_2\\
&\le\max_{j\in[n_l]}\|a_j^{(l)}\|_2\left\|b\right\|_2
\end{align}
for all $d_l$ such that $\|d_l\|_1=1$ and $d_l\in\{-1,0,1\}^{n_l}$.
From Lemma \ref{lem:epp-GTL} with $\Gamma_l=\max_{j\in[n_l]}\|a_j^{(l)}\|_2\left\|b\right\|_2-\eta_l$, we have the desired result.
\end{proof}

From Theorem \ref{thm:epp-square}, we obtain exact penalty parameters for Example \ref{ex:sparse-ols} (sparse linear regression), Example \ref{ex:sparse-robust-reg} (sparse robust regression), Example \ref{ex:MCV} (minimizing constraints violation), and Example \ref{ex:trend-filtering} (piece-wise linear trend estimation).
See \ref{sec:proofs} for the proof of Corollary \ref{cor:trend-filtering}.
The proofs of the others are straightforward, and hence we omit them.

\begin{corollary}[Exact penalty parameter for Example \ref{ex:sparse-ols}: sparse linear regression]\label{cor:sparse-ols}
Let $x^*$ be a d-stationary point of \eqref{problem:sparse-ols}.
If $\gamma>\max_{j\in[n]}\|a_j\|_2\|b\|_2-\eta$ holds, then $x^*$ satisfies $T_{K,n,1}(x^*)=0$, where $a_j$ denotes the $j$th column of $A$.
\end{corollary}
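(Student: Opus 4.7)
The plan is to identify Corollary \ref{cor:sparse-ols} as a direct instance of Theorem \ref{thm:epp-square}(b), using the parameter identifications $L=1$, $n_0=0$, $n_1=n$, $p_1=1$, $D_1=I$, $c_1=0$, $A_1=A$, and $\eta_1=\eta$. Under these choices the objective of \eqref{problem:sparse-ols} is exactly $\tfrac12\|b-Ax\|_2^2+\eta\|x\|_1+\gamma T_{K,n,1}(x)$, matching the framework in which Theorem \ref{thm:epp-square}(b) is stated.

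First I would check that Assumption (A1) of Lemma \ref{lem:epp-GTL} (invoked through Theorem \ref{thm:epp-square}) is satisfied. Since $c_1=0$, choosing $\overline{x}_1=0$ gives $D_1\overline{x}_1-c_1=0$, so (A1) holds trivially. The remaining hypotheses of Theorem \ref{thm:epp-square}(b) are precisely the structural assumptions $p_l=1$, $D_l=I$, $c_l=0$, and the least-squares-plus-$\ell_1$ form of $f$, all of which are in force here.

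Next I would read off the threshold. Theorem \ref{thm:epp-square}(b) asserts that the d-stationary point $x^*$ satisfies $T_{K,n,1}(x^*)=0$ whenever
\begin{equation}
\gamma>\max_{j\in[n]}\|a_j^{(1)}\|_2\,\|b\|_2-\eta,
\end{equation}
where $a_j^{(1)}$ is the $j$th column of $A_1=A$. Relabeling $a_j^{(1)}$ as $a_j$ yields exactly the stated bound $\gamma>\max_{j\in[n]}\|a_j\|_2\|b\|_2-\eta$, and the conclusion follows.

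There is no substantive obstacle: the work has been done in Theorem \ref{thm:epp-square}(b) via Lemma \ref{lem:epp-GTL}, and only the straightforward specialization needs to be recorded. The only thing to be careful about is ensuring the $\ell_1$ regularizer $\eta\|x\|_1$ is treated as part of the ``$g+\sum_l\eta_l\|x_l\|_1$'' decomposition of Lemma \ref{lem:epp-GTL}, rather than being absorbed into a generic smooth term; this is already the formulation used in statement (b) of the theorem, so the assignment $\eta_1=\eta$ is immediate and the proof reduces to a one-line invocation.
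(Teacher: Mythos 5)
Your proposal is correct and is exactly the route the paper intends: the paper omits the proof of Corollary \ref{cor:sparse-ols} as a straightforward specialization of Theorem \ref{thm:epp-square}(b) with $L=1$, $n_0=0$, $p_1=1$, $D_1=I$, $c_1=0$, $A_1=A$, $\eta_1=\eta$, with (A1) holding trivially since $c_1=0$. Nothing is missing.
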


\begin{corollary}[Exact penalty parameter for Example \ref{ex:sparse-robust-reg}: sparse robust regression]\label{cor:sparse-robust-reg}
Let $(x_1^*,x_2^*)$ be a d-stationary point of \eqref{problem:sparse-robust-reg}.
If $\gamma_1>\max_{j\in[n_1]}\|a_j\|_2\|b\|_2$ and $\gamma_2>\|b\|_2$ hold, then $(x_1^*,x_2^*)$ satisfies $T_{K_1,n,1}(x_1^*)=0$ and $T_{K_2,q,1}(x_2^*)=0$, where $a_j$ denotes the $j$th column of $A$.
\end{corollary}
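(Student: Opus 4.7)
The plan is to recognize problem \eqref{problem:sparse-robust-reg} as a specialization of the generalized trimmed lasso \eqref{problem:GTL} to which Theorem \ref{thm:epp-square}(b) applies directly. In the framework of that theorem I would set $L=2$, $n_0=0$, $p_1=p_2=1$, $D_1=I$, $D_2=I$, $c_1=c_2=0$, $\eta_1=\eta_2=0$, and rewrite the quadratic loss $\tfrac12\|b-Ax_1-x_2\|_2^2$ as $\tfrac12\|b-A_1x_1-A_2x_2\|_2^2$ with $A_1 := A$ and $A_2 := I$. Assumption (A1) then holds trivially by taking $\overline{x}_l = 0$ for $l=1,2$.

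Applying Theorem \ref{thm:epp-square}(b), any d-stationary point $(x_1^*, x_2^*)$ satisfies both sparsity constraints as soon as $\gamma_l > \max_{j \in [n_l]} \|a_j^{(l)}\|_2 \|b\|_2 - \eta_l$ for $l=1,2$. For $l=1$ the columns $a_j^{(1)}$ are precisely the columns $a_j$ of $A$, which reproduces the first stated bound $\gamma_1 > \max_{j \in [n_1]} \|a_j\|_2 \|b\|_2$. For $l=2$ the columns $a_j^{(2)}$ of $A_2 = I$ are the standard basis vectors, each of $\ell_2$-norm equal to $1$, so the bound collapses to $\gamma_2 > \|b\|_2$, matching the second hypothesis of the corollary.

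There is no substantive obstacle beyond bookkeeping: the key observation is that the term $-x_2$ in the loss should be read as $-I \cdot x_2$, so that the identity matrix plays the role of $A_2$; once this identification is in place, all quantities appearing in Theorem \ref{thm:epp-square}(b) are immediate to compute and the two thresholds follow by direct substitution.
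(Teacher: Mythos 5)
Your proposal is correct and follows exactly the route the paper intends: the paper omits this proof as a ``straightforward'' consequence of Theorem \ref{thm:epp-square}(b), and the intended argument is precisely your specialization $L=2$, $n_0=0$, $p_l=1$, $D_l=I$, $c_l=0$, $\eta_l=0$, $A_1=A$, $A_2=I$ (so Assumption (A1) holds with $\overline{x}_l=0$), giving the thresholds $\max_{j\in[n_1]}\|a_j\|_2\|b\|_2$ and $\|b\|_2$.
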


\begin{corollary}[Exact penalty parameter for Example \ref{ex:MCV}: minimizing constraints violation]\label{cor:MCV}
Let $x^*$ be a d-stationary point of \eqref{problem:MCV}.
The point $x^*$ satisfies $T_{K,m,1}(x^*)=0$ if $\gamma>\frac{\|b-\overline{x}\|_2}{\sigma_{\min}(D)}$ holds, where $\overline{x}$ is a point that satisfies $D\overline{x}=c$.
\end{corollary}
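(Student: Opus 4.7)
The plan is to recognize Problem \eqref{problem:MCV} as a special case of the generalized trimmed lasso \eqref{problem:GTL} with $L=1$, $n_0=0$, $p_1=1$, $m_1=m$, $n_1=n$, $K_1=K$, $D_1=D$, $c_1=c$, and $f(x)=\tfrac{1}{2}\|b-x\|_2^2=\tfrac{1}{2}\|b-Ix\|_2^2$, and then to invoke Theorem \ref{thm:epp-square}(a). Under this identification, the coefficient matrix multiplying the decision variable inside the squared norm is $A_1=I$, so $\|A_1\|_2=1$ and $A_1\overline{x}=\overline{x}$.

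Verifying Assumption (A1) is immediate: the hypothesis of the corollary supplies $\overline{x}$ with $D\overline{x}=c$, and in fact the surjectivity of $D$ built into Example \ref{ex:MCV} guarantees that such an $\overline{x}$ exists for any $c$. Applying Theorem \ref{thm:epp-square}(a) then tells us that any d-stationary point $x^*$ of \eqref{problem:MCV} satisfies $T_{K,m,1}(Dx^*-c)=0$ whenever
\[
\gamma > \frac{1}{\sigma_{K,m,1}(D)}\,\|A_1\|_2\,\|b-A_1\overline{x}\|_2 = \frac{\|b-\overline{x}\|_2}{\sigma_{K,m,1}(D)}.
\]

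Finally, since $D$ is surjective, the definition of $\sigma_{K,m,p}$ stated just before Lemma \ref{lem:epp-GTL} gives $\sigma_{K,m,1}(D)=\sigma_{\min}(D)$, collapsing the bound to the claimed $\gamma>\|b-\overline{x}\|_2/\sigma_{\min}(D)$. There is no substantive obstacle in this proof; it is a direct specialization of the already-established Theorem \ref{thm:epp-square}(a), and the only item that needs brief attention is matching the surjectivity hypothesis in Example \ref{ex:MCV} to the branch of the definition of $\sigma_{K,m,p}$ that produces $\sigma_{\min}(D)$.
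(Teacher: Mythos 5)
Your proof is correct and is exactly the specialization the paper intends: the authors omit this proof as ``straightforward'' from Theorem \ref{thm:epp-square}(a), and your instantiation with $A_1=I$, verification of Assumption (A1) via the surjectivity of $D$, and use of the surjective branch of the definition of $\sigma_{K,m,p}(D)$ to obtain $\sigma_{\min}(D)$ is precisely that argument.
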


\begin{corollary}[Exact penalty parameter for Example \ref{ex:trend-filtering}: piece-wise linear trend estimation]\label{cor:trend-filtering}
Let $x^*$ be a d-stationary point of \eqref{problem:trend-filtering},
\begin{equation}
X=
\begin{pmatrix}
1 & 1 \\
\vdots & \vdots \\
1 & n
\end{pmatrix},
\end{equation}
and $\hat{b}=X(X^\top X)^{-1}X^\top b$.
If $\gamma>\frac{\|b-\hat{b}\|_2}{2\sqrt{(1-\cos\frac{\pi}{n})(1-\cos\frac{\pi}{n-1})}}$ holds, then $x^*$ satisfies $T_{K,n-2,1}(D^{(2,n)}x^*)=0$.
\end{corollary}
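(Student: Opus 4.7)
The plan is to invoke Theorem~\ref{thm:epp-square}(a) with $L=1$, $n_0=0$, $A_1=I$, $D_1=D^{(2,n)}$, and $c_1=0$. Assumption (A1) is satisfied by taking $\overline{x}=\hat{b}$: since the columns of $X$ span the space of arithmetic sequences, which is precisely $\ker(D^{(2,n)})$, the projection $\hat{b}=X(X^\top X)^{-1}X^\top b$ lies in $\ker(D^{(2,n)})$, so $D^{(2,n)}\hat{b}=0$. With $A_1=I$, Theorem~\ref{thm:epp-square}(a) then yields $T_{K,n-2,1}(D^{(2,n)}x^*)=0$ whenever $\gamma>\|b-\hat{b}\|_2/\sigma_{K,n-2,1}(D^{(2,n)})$. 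Because $D^{(2,n)}$ has rank $n-2$ and is hence surjective onto $\mathbb{R}^{n-2}$, the quantity $\sigma_{K,n-2,1}(D^{(2,n)})$ reduces to $\sigma_{\min}(D^{(2,n)})$, and so the corollary reduces to establishing
\[
\sigma_{\min}(D^{(2,n)})\ge 2\sqrt{\bigl(1-\cos\tfrac{\pi}{n}\bigr)\bigl(1-\cos\tfrac{\pi}{n-1}\bigr)}.
\]

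To prove this singular-value bound, I would exploit the factorization $D^{(2,n)}=D^{(1,n-1)}D^{(1,n)}$, where $D^{(1,k)}\in\mathbb{R}^{(k-1)\times k}$ is the first-order difference matrix. The product $(D^{(1,k)})^\top D^{(1,k)}$ is the path-graph Laplacian $L_k$, whose eigenvalues are the well-known $2(1-\cos\frac{j\pi}{k})$, $j=0,1,\ldots,k-1$; in particular its smallest nonzero eigenvalue is $\lambda_k^\star:=2(1-\cos\frac{\pi}{k})$, so $\sigma_{\min}(D^{(1,k)})=\sqrt{\lambda_k^\star}$. For any $x\in\mathbb{R}^n$, write $y=D^{(1,n)}x$ and let $P=I-\frac{1}{n-1}\mathbf{1}\mathbf{1}^\top$ be the orthogonal projection onto $\ker(D^{(1,n-1)})^\perp$. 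Since $(I-P)y$ is a multiple of $\mathbf{1}\in\ker(D^{(1,n-1)})$, we have $D^{(2,n)}x=D^{(1,n-1)}(Py)$, and because $Py\in\ker(D^{(1,n-1)})^\perp$, Lemma~\ref{lem:restricted bounded below} yields $\|D^{(2,n)}x\|_2^2\ge\lambda_{n-1}^\star\|Py\|_2^2=\lambda_{n-1}^\star\,x^\top\tilde{L}_n x$, where $\tilde{L}_n:=L_n-\tfrac{1}{n-1}(e_n-e_1)(e_n-e_1)^\top$.

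The main obstacle is then to show that the smallest nonzero eigenvalue of $\tilde{L}_n$ is at least $\lambda_n^\star$, since this would immediately give $\|D^{(2,n)}x\|_2^2\ge\lambda_{n-1}^\star\lambda_n^\star\|x\|_2^2$ on $\ker(D^{(2,n)})^\perp=\ker(\tilde{L}_n)^\perp$. My plan is to argue this by Cauchy interlacing. First, $\tilde{L}_n=(PD^{(1,n)})^\top(PD^{(1,n)})$ is positive semidefinite. Second, a direct computation verifies $\tilde{L}_n\mathbf{1}=0$ and $\tilde{L}_n(1,2,\ldots,n)^\top=0$, so $\ker(\tilde{L}_n)\supset\ker(D^{(2,n)})$ has dimension at least two; combined with positive semidefiniteness, this forces the two smallest eigenvalues $\tilde{\mu}_0\le\tilde{\mu}_1$ of $\tilde{L}_n$ to both equal zero. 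Third, because $\tilde{L}_n$ arises from $L_n$ by a rank-one positive semidefinite subtraction, the Cauchy interlacing theorem gives $\tilde{\mu}_j\ge\mu_{j-1}$ for $j\ge 1$, where $0=\mu_0<\lambda_n^\star=\mu_1\le\mu_2\le\cdots$ are the eigenvalues of $L_n$ in nondecreasing order. Applying this with $j=2$ yields $\tilde{\mu}_2\ge\mu_1=\lambda_n^\star$, completing the argument and producing the claimed exact penalty threshold.
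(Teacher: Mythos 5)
Your proposal is correct, and its top-level reduction coincides with the paper's: both invoke Theorem \ref{thm:epp-square}(a) with $A_1=I$, $D_1=D^{(2,n)}$, $c_1=0$, verify Assumption (A1) via $\overline{x}=\hat{b}\in\ker(D^{(2,n)})$, note that surjectivity of $D^{(2,n)}$ reduces $\sigma_{K,n-2,1}(D^{(2,n)})$ to $\sigma_{\min}(D^{(2,n)})$, and then lower-bound this via the factorization $D^{(2,n)}=D^{(1,n-1)}D^{(1,n)}$ and the known path-Laplacian eigenvalues. The difference lies in how the factorization is exploited. The paper works with the adjoint: for $y\in\mathbb{R}^{n-2}$ it bounds $\|{D^{(2,n)}}^\top y\|_2^2=\|{D^{(1,n)}}^\top{D^{(1,n-1)}}^\top y\|_2^2\ge\lambda_{\min}(D^{(1,n)}{D^{(1,n)}}^\top)\,\lambda_{\min}(D^{(1,n-1)}{D^{(1,n-1)}}^\top)\,\|y\|_2^2$, and since both Gram matrices $D^{(1,k)}{D^{(1,k)}}^\top$ are positive definite (eigenvalues $2(1-\cos\frac{j\pi}{k})$, $j\ge1$), the two bounds chain with no kernel bookkeeping at all. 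You instead stay on the primal side, which forces you to project $y=D^{(1,n)}x$ onto $\ker(D^{(1,n-1)})^\perp$ and then control the smallest nonzero eigenvalue of the rank-one-modified Laplacian $\tilde L_n=L_n-\frac{1}{n-1}(e_1-e_n)(e_1-e_n)^\top$ via Weyl/interlacing for rank-one updates; your interlacing inequality $\tilde\mu_j\ge\mu_{j-1}$ and the identification $\ker(\tilde L_n)=\ker(PD^{(1,n)})=\ker(D^{(2,n)})$ (immediate from $\tilde L_n=(PD^{(1,n)})^\top(PD^{(1,n)})$ and $D^{(1,n-1)}(I-P)=0$, which you should state explicitly rather than only the inclusion $\supset$) make the argument go through and yield the same constant $2\sqrt{(1-\cos\frac{\pi}{n})(1-\cos\frac{\pi}{n-1})}$. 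So your route is sound but strictly more laborious; the paper's transpose trick buys a two-line estimate with no projection or interlacing, while your version has the mild virtue of exhibiting the bound directly as a statement about the quadratic form $x^\top\tilde L_n x$ on $\ker(D^{(2,n)})^\perp$.
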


Note that exact penalty parameters of \eqref{problem:sparse-robust-reg}, \eqref{problem:MCV}, and \eqref{problem:trend-filtering} are derived for the first time.
Corollary \ref{cor:sparse-ols} is equivalent to the result by \citet[Theorem 2.1]{amir2021trimmed} and slightly stronger than the one by \citet[Theorem 2.3]{bertsimas2017trimmed}\footnote{\citet{bertsimas2017trimmed} states ``$a_j$ denotes the $j$th row of $A$,'' which we think is a typo.}.
We close this section by mentioning that it is not known whether the exact penalty parameter by \citet{yagishita2024pursuit} or the exact penalty parameter derived from Lemma \ref{lem:epp-GTL} is smaller for the network trimmed lasso \citep{yagishita2024pursuit}.

\section{Trimmed Lasso with Additional Constraint}\label{sec:TL-const}
In this section, we derive exact penalty parameters for the following constrained problem:
\begin{equation}\label{problem:const-TL}
\underset{x_0\in\mathbb{R}^{n_0},x_1\in\mathbb{R}^{n_1},\ldots,x_L\in\mathbb{R}^{n_L}}{\mbox{minimize}} \quad f(x_0,x_1,\ldots,x_L)+\sum_{l\in[L]} \gamma_l T_{K_l,n_l,1}(x_l)+\delta_\mathcal{C}(x_0,x_1,\ldots,x_L),
\end{equation}
where $\mathcal{C}\subset\dom f$.
From Corollary \ref{cor:local-opt<=>d-stationary}, we have the following equivalence between the d-stationarity and the local optimality of \eqref{problem:const-TL}.

\begin{proposition}\label{prop:local-opt<=>d-stationary-in-const-TL}
D-stationary points and local minimizers of \eqref{problem:const-TL} are equivalent if $f+\delta_\mathcal{C}$ is continuous on its domain, is convex, and is directionally differentiable and $\mathcal{C}$ is closed.
\end{proposition}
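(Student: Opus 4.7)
The plan is to recognize that \eqref{problem:const-TL} fits the pointwise–minimum–of–convex–functions template, so the statement follows directly from Corollary \ref{cor:local-opt<=>d-stationary}. Since a local minimizer is always a d-stationary point, only the converse direction requires work.

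First, I would expand each trimmed $\ell_1$ norm via its defining minimum,
\begin{equation}
T_{K_l,n_l,1}(x_l)=\min_{\substack{\Lambda_l\subset[n_l]\\|\Lambda_l|=n_l-K_l}}\sum_{i\in\Lambda_l}|x_{l,i}|,
\end{equation}
and pull every such minimum outside the whole objective. Writing $\mathcal{I}$ for the (finite) collection of tuples $(\Lambda_1,\ldots,\Lambda_L)$ with $\Lambda_l\subset[n_l]$ and $|\Lambda_l|=n_l-K_l$, the objective of \eqref{problem:const-TL} equals
\begin{equation}
F(x_0,\ldots,x_L)=\min_{(\Lambda_1,\ldots,\Lambda_L)\in\mathcal{I}}f_{(\Lambda_1,\ldots,\Lambda_L)}(x_0,\ldots,x_L),
\end{equation}
where
\begin{equation}
f_{(\Lambda_1,\ldots,\Lambda_L)}(x_0,\ldots,x_L)\coloneqq f(x_0,\ldots,x_L)+\sum_{l\in[L]}\gamma_l\sum_{i\in\Lambda_l}|x_{l,i}|+\delta_\mathcal{C}(x_0,\ldots,x_L).
\end{equation}

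Next I would verify that each $f_{(\Lambda_1,\ldots,\Lambda_L)}$ satisfies the hypotheses of Proposition \ref{prop:PMFMcvx=>LFcvx}. Convexity is immediate because $f+\delta_\mathcal{C}$ is convex by assumption and the additional $\ell_1$ terms are convex with full domain. Directional differentiability follows because $f+\delta_\mathcal{C}$ is directionally differentiable by assumption and the $\ell_1$ terms are directionally differentiable everywhere, so the sum is directionally differentiable. The domain of $f_{(\Lambda_1,\ldots,\Lambda_L)}$ coincides with $\dom(f+\delta_\mathcal{C})=\mathcal{C}$ (noting $\mathcal{C}\subset\dom f$), which is closed by assumption. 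Continuity on $\mathcal{C}$ follows from the assumed continuity of $f+\delta_\mathcal{C}$ on its domain plus continuity of the $\ell_1$ terms. Finally, $F$ itself is continuous on $\dom F=\mathcal{C}$ as the pointwise minimum of finitely many such continuous functions.

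Having verified the hypotheses, I would invoke Corollary \ref{cor:local-opt<=>d-stationary} to conclude that every d-stationary point of \eqref{problem:const-TL} is a local minimizer. The reverse implication (local minimizer $\Rightarrow$ d-stationarity) is the general fact stated just before Lemma \ref{lem:d-derivative-min} and requires no additional argument. The main subtlety—and essentially the only place assumptions do real work—is ensuring that the decomposition inherits the convexity, directional differentiability, and continuity assumed globally for $f+\delta_\mathcal{C}$; everything else is bookkeeping over the finite index set $\mathcal{I}$.
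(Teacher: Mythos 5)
Your proposal is correct and is essentially the paper's own argument: the paper also obtains Proposition \ref{prop:local-opt<=>d-stationary-in-const-TL} by viewing the objective of \eqref{problem:const-TL} as the pointwise minimum, over the finitely many index tuples $(\Lambda_1,\ldots,\Lambda_L)$, of the convex functions $f+\sum_{l\in[L]}\gamma_l\sum_{i\in\Lambda_l}|x_{l,i}|+\delta_\mathcal{C}$ and then invoking Corollary \ref{cor:local-opt<=>d-stationary}. Your verification of the hypotheses of Proposition \ref{prop:PMFMcvx=>LFcvx} matches what the paper leaves implicit.
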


The problem \eqref{problem:const-TL} includes the following examples.

\begin{example}[Sparse eigenvalue problem \citep{gotoh2018dc}]\label{ex:sparse-eigen}
For $\mathcal{C}=\{x\in\mathbb{R}^n\mid\|x\|_2\le1\}$ and $A\in\mathcal{S}^n$ such that $\lambda_{\max}(A)>0$, \citet{gotoh2018dc} considered the following problem:
\begin{equation}\label{problem:sparse-eigen}
\underset{x\in\mathbb{R}^n}{\mbox{minimize}} \quad -\frac{1}{2}x^\top Ax+\gamma T_{K,n,1}(x)+\delta_\mathcal{C}(x).
\end{equation}
\end{example}

\begin{example}[Sparse portfolio selection \citep{gotoh2018dc}]\label{ex:sparse-mean-variance}
For $\mathcal{C}=\{x\in\mathbb{R}^n\mid\mathbf{1}^\top x=1,~ x\ge0\},~ b\in\mathbb{R}^n$ and $A\in\mathcal{S}^n$ such that $A\succeq 0$, \citet{gotoh2018dc} considered the following problem:
\begin{equation}\label{problem:sparse-mean-variance}
\underset{x\in\mathbb{R}^n}{\mbox{minimize}} \quad \frac{1}{2}x^\top Ax+b^\top x+\gamma T_{K,n,1}(x)+\delta_\mathcal{C}(x).
\end{equation}
\end{example}

\begin{example}[Sparse nonnegative linear regression]\label{ex:sparse-nn-ols}
Let $b\in\mathbb{R}^q,~ A\in\mathbb{R}^{q\times n}$ be given in the same manner as in Examples \ref{ex:sparse-ols}, \ref{ex:sparse-reg-p1}, and \ref{ex:sparse-robust-reg}. For $\emptyset\neq I\subset[n]$ and $\mathcal{C}=\{x\in\mathbb{R}^n\mid x_i\ge0,~ i\in I\}$, we consider the following problem:
\begin{equation}\label{problem:sparse-nn-ols}
\underset{x\in\mathbb{R}^n}{\mbox{minimize}} \quad \frac{1}{2}\|b-Ax\|_2^2+\gamma T_{K,n,1}(x)+\delta_\mathcal{C}(x).
\end{equation}
\citet{tono2017efficient} dealt with a problem similar to this problem.
\end{example}

\begin{example}[Clustering by penalized nonnegative matrix factorization \citep{wang2021clustering}]\label{ex:NMF-clustering}
Let $A=(a_1,\ldots,a_q)\in\mathbb{R}^{p\times q}$ be a nonnegative data matrix where $a_j$ is the feature vector of the $j$th sample.
\citet{wang2021clustering} considered the following problem:
\begin{equation}\label{problem:NMF-clustering}
\underset{W\in\mathbb{R}^{p\times n},X\in\mathbb{R}^{n\times q}}{\mbox{minimize}} \quad \frac{1}{2}\|A-WX\|_F^2+\frac{\eta_1}{2}\|W\|_F^2+\frac{\eta_2}{2}\|X\|_F^2+\gamma\sum_{j\in[q]}T_{1,n,1}(x_j)+\delta_\mathcal{C}(W,X),
\end{equation}
where $\mathcal{C}=\{(W,X)\in\mathbb{R}^{p\times n}\times\mathbb{R}^{n\times q}\mid W\ge0,~X\ge0\}$, $\eta_1>0,~ \eta_2>0$, and $x_j$ is the $j$th column of $X$.
\end{example}

We note that, unlike the examples so far, the functions $f$ exploited in Examples \ref{ex:sparse-eigen} and \ref{ex:NMF-clustering} are nonconvex.
A variant of Lemma \ref{lem:epp-GTL} for the constrained problem \eqref{problem:const-TL} is given as follows.
The proof is similar to the one of the last part of Lemma \ref{lem:epp-GTL} and is therefore omitted.

\begin{lemma}\label{lem:epp-const-TL}
Let $f+\delta_\mathcal{C}$ be directionally differentiable and $x^*=(x_0^*,x_1^*,\ldots,x_L^*)$ be a d-stationary point of \eqref{problem:const-TL}.
Suppose that the following assumptions hold for $l\in[L]$:
\begin{enumerate}[({B}1)]
\item For any $x=(x_0,x_1,\ldots,x_L)\in\mathcal{C}$ and $i\in[n_l]$, it holds that $(0,\ldots,-(x_l)_ie_i,\ldots,0)\in\mathcal{F}(x;\mathcal{C})$;
\item There exists $\Gamma_l>0$ such that
\begin{align}\label{ineq:bounded-cond-const-TL}
\sup_{\substack{\|d_l\|_1=1\\ d_l\in\{-1,0,1\}^{n_l}\\ (0,\ldots,d_l,\ldots,0)\in\mathcal{F}(x^*;\mathcal{C})}}f'(x_0^*,\ldots,x_L^*;0,\ldots,d_l,\ldots,0)\le\Gamma_l.
\end{align}
\end{enumerate}
Then, $(x_0^*,x_1^*,\ldots,x_L^*)$ satisfies $T_{K_l,n_l,1}(x_l^*)=0$ if $\gamma_l>\Gamma_l$ holds.
\end{lemma}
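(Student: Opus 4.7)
The plan is to mimic the last part of the proof of Lemma \ref{lem:epp-GTL}, adapting it to the constrained setting via the feasible-cone assumption (B1). Fix $l\in[L]$ and argue by contradiction: suppose $T_{K_l,n_l,1}(x_l^*)>0$. Since $x^*$ is a d-stationary point of \eqref{problem:const-TL}, for every direction $(0,\ldots,d_l,\ldots,0)\in\mathcal{F}(x^*;\mathcal{C})$ we have
\begin{equation}
f'(x_0^*,\ldots,x_L^*;0,\ldots,d_l,\ldots,0)+\gamma_l T_{K_l,n_l,1}'(x_l^*;d_l)\ge 0,
\end{equation}
where the directional derivative in the $l'$-th coordinate with $d_{l'}=0$ contributes nothing for $l'\neq l$. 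Using Lemma \ref{lem:d-derivative-T_K}, the trimmed-norm term expands to $\sum_{i\in\Lambda_1}\Delta((x_l^*)_i;(d_l)_i)+\min_{\Lambda\subset\Lambda_2,\,|\Lambda|=n_l-K_l-|\Lambda_1|}\sum_{i\in\Lambda}\Delta((x_l^*)_i;(d_l)_i)$.

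Next, I would exhibit an index $i'\in\Lambda_1\cup\Lambda$ with $(x_l^*)_{i'}\neq 0$; such an $i'$ exists because the assumption $T_{K_l,n_l,1}(x_l^*)>0$ says that the sum of the $n_l-K_l$ smallest entries of $(|(x_l^*)_1|,\ldots,|(x_l^*)_{n_l}|)$ is strictly positive, and the indices contributing to this sum lie in $\Lambda_1\cup\Lambda$ for some admissible choice of $\Lambda$. Set $d_l=-\frac{(x_l^*)_{i'}}{|(x_l^*)_{i'}|}\,e_{i'}$. Assumption (B1), applied at $x=x^*$ with index $i'$, gives $(0,\ldots,-(x_l^*)_{i'}e_{i'},\ldots,0)\in\mathcal{F}(x^*;\mathcal{C})$, and scaling by the positive factor $1/|(x_l^*)_{i'}|$ shows $(0,\ldots,d_l,\ldots,0)\in\mathcal{F}(x^*;\mathcal{C})$ as well, so $d_l$ is an admissible test direction.

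Substituting this choice of $d_l$ into the d-stationarity inequality, every $\Delta((x_l^*)_i;(d_l)_i)$ for $i\neq i'$ vanishes (because $(d_l)_i=0$ makes the quotient vanish when $(x_l^*)_i\neq 0$ and the norm vanish when $(x_l^*)_i=0$), while $\Delta((x_l^*)_{i'};(d_l)_{i'})=-1$. Hence the stationarity inequality reduces to $f'(x_0^*,\ldots,x_L^*;0,\ldots,d_l,\ldots,0)-\gamma_l\ge 0$, and since $d_l\in\{-1,0,1\}^{n_l}$ with $\|d_l\|_1=1$, assumption (B2) bounds the left-hand side by $\Gamma_l$, yielding $\gamma_l\le \Gamma_l$, which contradicts the hypothesis $\gamma_l>\Gamma_l$.

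The only nontrivial step is checking that the single-coordinate direction needed to produce the contradiction is admissible, and that is precisely what (B1) is designed to guarantee; once feasibility is secured, the remainder is a direct computation lifted from the closing display of the proof of Lemma \ref{lem:epp-GTL} (with $\eta_l=0$), so the proof is essentially the same and can reasonably be omitted.
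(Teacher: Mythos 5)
Your proposal is correct and takes essentially the same route the paper intends: the paper omits the proof precisely because it is the single-coordinate test-direction argument from the last part of Lemma \ref{lem:epp-GTL}, which you reproduce, with (B1) together with the cone property of $\mathcal{F}(x^*;\mathcal{C})$ (positive scaling by $1/|(x_l^*)_{i'}|$) correctly supplying admissibility of the direction $d_l=-\sgn((x_l^*)_{i'})e_{i'}$ before invoking (B2).
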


Similar to Theorems \ref{thm:epp-smooth-bounded}--\ref{thm:epp-square}, we can obtain Theorems \ref{thm:epp-smooth-bounded-const}--\ref{thm:epp-square-const} from Lemma \ref{lem:epp-const-TL}.
Thus, we omit the proofs of them.
Exact penalty results for Examples \ref{ex:sparse-eigen} and \ref{ex:sparse-nn-ols} are immediate corollaries of Theorems \ref{thm:epp-smooth-bounded-const} (or \ref{thm:epp-Lipschitz-const}) and \ref{thm:epp-square-const}, respectively.

\begin{theorem}\label{thm:epp-smooth-bounded-const}
Suppose that the Assumption (B1) holds and $f$ is $M$-smooth on an open set $\mathcal{O}\subset\mathcal{C}$, where $\mathcal{O}$ contains the origin.
Let $(x_0^*,x_1^*,\ldots,x_L^*)$ be a d-stationary point of \eqref{problem:const-TL} and $C_l>0$ be constants such that $\|x_l^*\|_2\le C_l$ for all $l=0,\ldots,L$.
Then, $(x_0^*,x_1^*,\ldots,x_L^*)$ satisfies $\|x_l^*\|_0\le K_l$ if $\gamma_l>\|\nabla_{x_l}f(0,\ldots,0)\|_\infty+M\sqrt{\sum_{l=0}^LC_l^2}$ holds for all $l\in[L]$.
\end{theorem}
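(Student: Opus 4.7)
The plan is to invoke Lemma \ref{lem:epp-const-TL} with
\begin{equation}
\Gamma_l \;:=\; \|\nabla_{x_l} f(0,\ldots,0)\|_\infty + M\sqrt{\sum_{l=0}^L C_l^2}.
\end{equation}
Assumption (B1) is part of the hypothesis, so everything reduces to verifying (B2) with this choice of $\Gamma_l$. Once that is done, the bound $\gamma_l > \Gamma_l$ from Lemma \ref{lem:epp-const-TL} yields $T_{K_l,n_l,1}(x_l^*) = 0$, which by the characterization around \eqref{ineq:simple-card} is exactly $\|x_l^*\|_0 \le K_l$.

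First I would fix $l \in [L]$ and any candidate direction for the sup in (B2), namely $d_l \in \{-1,0,1\}^{n_l}$ with $\|d_l\|_1 = 1$ and $(0,\ldots,d_l,\ldots,0) \in \mathcal{F}(x^*;\mathcal{C})$. Such a $d_l$ is necessarily $\pm e_i$ for some $i \in [n_l]$. Since $f$ is differentiable on the open set $\mathcal{O}$, the directional derivative collapses to an inner product and H\"older gives
\begin{equation}
f'(x_0^*,\ldots,x_L^*;0,\ldots,d_l,\ldots,0) \;=\; \nabla_{x_l} f(x_0^*,\ldots,x_L^*)^\top d_l \;\le\; \|\nabla_{x_l} f(x_0^*,\ldots,x_L^*)\|_\infty.
\end{equation}

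Next I would split the gradient around the origin by the triangle inequality and pass to the $\ell_2$ norm of the full gradient (since the $\ell_\infty$ norm of any subvector is dominated by the $\ell_2$ norm of the whole vector), obtaining
\begin{align}
\|\nabla_{x_l} f(x_0^*,\ldots,x_L^*)\|_\infty &\le \|\nabla_{x_l} f(0,\ldots,0)\|_\infty + \|\nabla f(x_0^*,\ldots,x_L^*) - \nabla f(0,\ldots,0)\|_2 \\
&\le \|\nabla_{x_l} f(0,\ldots,0)\|_\infty + M\|(x_0^*,\ldots,x_L^*)\|_2 \\
&\le \|\nabla_{x_l} f(0,\ldots,0)\|_\infty + M\sqrt{\sum_{l=0}^L C_l^2},
\end{align}
where the second step uses $M$-smoothness of $f$ on $\mathcal{O}$ (both $0$ and $(x_0^*,\ldots,x_L^*)$ lying in $\mathcal{O}$), and the third uses the componentwise bounds $\|x_l^*\|_2 \le C_l$. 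This confirms (B2) with the announced $\Gamma_l$, and Lemma \ref{lem:epp-const-TL} then closes the proof.

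The argument is essentially the constrained mirror of Theorem \ref{thm:epp-smooth-bounded}: the only real substantive point is that Lemma \ref{lem:epp-const-TL} already restricts the test directions to the discrete set $d_l \in \{-1,0,1\}^{n_l}$ with $\|d_l\|_1 = 1$, which is what allows the gradient-at-origin term to appear in $\ell_\infty$ rather than $\ell_2$ and matches the sharpened bound recorded in the final sentence of Theorem \ref{thm:epp-smooth-bounded}. No new analytic ingredient (beyond what drives Theorem \ref{thm:epp-smooth-bounded}) is needed, so I do not anticipate a genuine obstacle — the proof is a short deduction once (B2) is assembled as above.
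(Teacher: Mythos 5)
Your proposal is correct and is exactly the route the paper indicates (the paper omits this proof, noting it follows from Lemma \ref{lem:epp-const-TL} in the same way Theorem \ref{thm:epp-smooth-bounded} follows from Lemma \ref{lem:epp-GTL}): verify (B2) with $\Gamma_l=\|\nabla_{x_l}f(0,\ldots,0)\|_\infty+M\sqrt{\sum_{l=0}^LC_l^2}$ via the Cauchy--Schwarz/H\"older bound on signed-unit directions, the triangle inequality at the origin, and $M$-smoothness, then conclude $T_{K_l,n_l,1}(x_l^*)=0$. The only point worth stating explicitly is that $(x_0^*,\ldots,x_L^*)$ is assumed to lie where $f$ is differentiable with the Lipschitz-gradient bound applicable (as you implicitly do), which matches the paper's intended reading of the smoothness hypothesis.
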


\begin{theorem}\label{thm:epp-Lipschitz-const}
Suppose that the Assumption (B1) holds, and that $f+\delta_\mathcal{C}$ is directionally differentiable and Lipschitz continuous on its domain with constant $M$ under the $\ell_1$ norm.
Then, any d-stationary point $(x_0^*,x_1^*,\ldots,x_L^*)$ of \eqref{problem:const-TL} satisfies $\|x_l^*\|_0\le K_l$ if $\gamma_l>M$ holds for all $l\in[L]$.
\end{theorem}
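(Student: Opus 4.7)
The plan is to derive Theorem \ref{thm:epp-Lipschitz-const} as a direct consequence of Lemma \ref{lem:epp-const-TL} by verifying Assumption (B2) with $\Gamma_l=M$, exactly mirroring how Theorem \ref{thm:epp-Lipschitz} was obtained from Lemma \ref{lem:epp-GTL}. The only new wrinkle is that the Lipschitz hypothesis is now imposed on $f+\delta_\mathcal{C}$ rather than on $f$, so we must convert a bound on the directional derivative of $f+\delta_\mathcal{C}$ into a bound on that of $f$ along the specific class of directions appearing in (B2).

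First, I would apply Lemma \ref{lem:d-derivative-lipschitz} to $h\coloneqq f+\delta_\mathcal{C}$, whose domain is $\mathcal{C}$ (since $\mathcal{C}\subset\dom f$). For any $d_l\in\mathbb{R}^{n_l}$ with $\|d_l\|_1=1$ such that $(0,\ldots,d_l,\ldots,0)\in\mathcal{F}(x^*;\mathcal{C})=\mathcal{F}(x^*;\dom h)$, the lemma yields
\begin{equation}
\bigl|h'(x^*;(0,\ldots,d_l,\ldots,0))\bigr|\le M\,\|(0,\ldots,d_l,\ldots,0)\|_1=M.
\end{equation}

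Next, I would argue that $h'(x^*;(0,\ldots,d_l,\ldots,0))=f'(x^*;(0,\ldots,d_l,\ldots,0))$ for such feasible directions. Indeed, because $(0,\ldots,d_l,\ldots,0)\in\mathcal{F}(x^*;\mathcal{C})$, the indicator $\delta_\mathcal{C}$ is identically zero along the ray $\varsigma\mapsto x^*+\varsigma(0,\ldots,d_l,\ldots,0)$ for sufficiently small $\varsigma>0$, while $\delta_\mathcal{C}(x^*)=0$ since $x^*\in\dom h$. Hence $\delta_\mathcal{C}'(x^*;(0,\ldots,d_l,\ldots,0))=0$, and the directional derivative of $h$ coincides with that of $f$ along these directions. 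Combining this with the bound above shows that Assumption (B2) is satisfied with $\Gamma_l=M$ (with the supremum restricted, as in Lemma \ref{lem:epp-const-TL}, to the $\pm e_i$-type feasible directions of unit $\ell_1$-norm). Invoking Lemma \ref{lem:epp-const-TL} with this choice of $\Gamma_l$ then delivers $\|x_l^*\|_0\le K_l$ whenever $\gamma_l>M$, as required.

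There is essentially no hard obstacle here — the argument is a one-line chain from Lemmas \ref{lem:d-derivative-lipschitz} and \ref{lem:epp-const-TL}. The only subtlety worth flagging is the conceptual step of passing from the Lipschitz constant of $f+\delta_\mathcal{C}$ (which is what is natural to assume, since $f$ alone need not even be finite outside $\mathcal{C}$) to the directional-derivative bound on $f$ used in (B2); this is legitimate precisely because (B2) only evaluates $f'$ along directions that stay inside $\mathcal{C}$, where the indicator contributes nothing. For this reason I would state the lemma-chain in the order above, making the cancellation of $\delta_\mathcal{C}'$ explicit, and then the conclusion follows without further work, so the proof can be kept very short (indeed, the paper already signals that it will be omitted for brevity).
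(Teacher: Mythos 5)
Your proposal is correct and follows the same route the paper intends: verify Assumption (B2) with $\Gamma_l=M$ via Lemma \ref{lem:d-derivative-lipschitz} applied to $f+\delta_\mathcal{C}$ (noting that $\delta_\mathcal{C}$ contributes nothing to the directional derivative along feasible directions), and then invoke Lemma \ref{lem:epp-const-TL}, exactly as the paper's omitted proof mirrors that of Theorem \ref{thm:epp-Lipschitz}. Your explicit remark on passing from the Lipschitz bound on $f+\delta_\mathcal{C}$ to the bound on $f'$ is a welcome clarification but not a departure from the paper's argument.
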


\begin{theorem}\label{thm:epp-square-const}
Suppose that the Assumption (B1) holds and that $-x\in\mathcal{F}(x;\mathcal{C})$ for any $x=(x_0,x_1,\ldots,x_L)\in\mathcal{C}$.
Let $f(x_0,x_1,\ldots,x_L)=\frac{1}{2}\|b-\sum_{l=0}^LA_lx_l\|_2^2$, where $b\in\mathbb{R}^q, A_l\in\mathbb{R}^{q\times n_l}$.
Then, any d-stationary point $(x_0^*,x_1^*,\ldots,x_L^*)$ of \eqref{problem:const-TL} satisfies $\|x_l^*\|_0\le K_l$ if $\gamma_l>\max_{j\in[n_l]}\|a_j^{(l)}\|_2\left\|b\right\|_2$ holds for all $l\in[L]$.
\end{theorem}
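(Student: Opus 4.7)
The plan is to mirror the proof of Theorem \ref{thm:epp-square}(b), substituting the (B)-assumptions for the (A)-assumptions and routing the argument through Lemma \ref{lem:epp-const-TL}. Concretely, I will establish the bound $\|b-\sum_{l=0}^LA_lx_l^*\|_2\le\|b\|_2$ at any d-stationary point $x^*=(x_0^*,x_1^*,\ldots,x_L^*)$ and then verify Assumption (B2) of Lemma \ref{lem:epp-const-TL} with $\Gamma_l=\max_{j\in[n_l]}\|a_j^{(l)}\|_2\|b\|_2$.

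\medskip

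First I would exploit the extra hypothesis $-x^*\in\mathcal{F}(x^*;\mathcal{C})$ to test d-stationarity in the direction $-x^*$. Because $\delta_\mathcal{C}'(x^*;-x^*)=0$ on this feasible ray, and $f,\sum_{l}T_{K_l,n_l,1}$ are everywhere directionally differentiable, d-stationarity of \eqref{problem:const-TL} collapses to
\begin{equation}
f'(x^*;-x^*)+\sum_{l\in[L]}\gamma_l T'_{K_l,n_l,1}(x_l^*;-x_l^*)\ge 0.
\end{equation}
The positive homogeneity $T_{K_l,n_l,1}((1-\varsigma)x_l^*)=(1-\varsigma)T_{K_l,n_l,1}(x_l^*)$ gives $T'_{K_l,n_l,1}(x_l^*;-x_l^*)=-T_{K_l,n_l,1}(x_l^*)$, so
\begin{equation}
f'(x^*;-x^*)\ge\sum_{l\in[L]}\gamma_l T_{K_l,n_l,1}(x_l^*)\ge 0.
\end{equation}

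\medskip

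Next, because $f$ is convex on all of $\mathbb{R}^{\sum_l n_l}$ (no constraint-set issues here), Lemma \ref{lem:d-derivative-descent-lemma} applied with $x=x^*$ and $y=0$ yields $f(0)\ge f(x^*)+f'(x^*;-x^*)$. Combining with the inequality above,
\begin{equation}
\tfrac{1}{2}\|b-{\textstyle\sum}_{l=0}^L A_lx_l^*\|_2^2=f(x^*)\le f(0)=\tfrac{1}{2}\|b\|_2^2,
\end{equation}
so $\|b-\sum_{l=0}^LA_lx_l^*\|_2\le\|b\|_2$. Then for any $d_l\in\{-1,0,1\}^{n_l}$ with $\|d_l\|_1=1$, i.e.\ $d_l=\pm e_j$, the Cauchy--Schwarz inequality gives
\begin{equation}
f'(x^*;0,\ldots,d_l,\ldots,0)=-\bigl\langle b-{\textstyle\sum}_{l=0}^L A_lx_l^*,\,A_ld_l\bigr\rangle\le \|a_j^{(l)}\|_2\|b\|_2\le \max_{j\in[n_l]}\|a_j^{(l)}\|_2\|b\|_2,
\end{equation}
which verifies Assumption (B2) with $\Gamma_l=\max_{j\in[n_l]}\|a_j^{(l)}\|_2\|b\|_2$. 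Invoking Lemma \ref{lem:epp-const-TL} then delivers $T_{K_l,n_l,1}(x_l^*)=0$, i.e.\ $\|x_l^*\|_0\le K_l$, for every $l\in[L]$.

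\medskip

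The one delicate point is the first step. Unlike Theorem \ref{thm:epp-square}(b), we cannot simply invoke d-stationarity using a global ``reference point'' $\overline{x}$, because $0$ may not belong to $\mathcal{C}$. The workaround is that the hypothesis $-x\in\mathcal{F}(x;\mathcal{C})$ supplies a \emph{feasible ray} from $x^*$ toward $0$, just enough to make $\delta_\mathcal{C}'(x^*;-x^*)=0$, while the descent inequality from Lemma \ref{lem:d-derivative-descent-lemma} for the \emph{unconstrained} convex quadratic $f$ extrapolates this local information out to $y=0$. Once this bound on the residual is in place, verifying (B2) is routine and Lemma \ref{lem:epp-const-TL} finishes the proof.
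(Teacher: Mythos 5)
Your proof is correct and follows exactly the route the paper intends: the paper omits this proof, stating that it is obtained from Lemma \ref{lem:epp-const-TL} in the same way Theorem \ref{thm:epp-square} is obtained from Lemma \ref{lem:epp-GTL}, which is precisely what you do (using the hypothesis $-x^*\in\mathcal{F}(x^*;\mathcal{C})$ in place of the reference point $\overline{x}_l$, deriving $\|b-\sum_{l=0}^LA_lx_l^*\|_2\le\|b\|_2$ from d-stationarity along $-x^*$ plus the convexity descent inequality, and then verifying (B2) with $\Gamma_l=\max_{j\in[n_l]}\|a_j^{(l)}\|_2\|b\|_2$).
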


In Example \ref{ex:NMF-clustering}, although $\mathcal{C}$ satisfies Assumption (B1), the function $f$ does not satisfy any of the assumptions of Theorems \ref{thm:epp-smooth-bounded-const}--\ref{thm:epp-square-const}.
Thus, we show the following theorem.
See \ref{sec:proofs} for the proof.

\begin{theorem}\label{thm:NMF-clustering}
Let $(W^*,X^*)$ be a d-stationary point of \eqref{problem:NMF-clustering}.
If $\gamma>\max_{j\in[q]}\frac{\|a_j\|_2}{\sqrt{2}}\left(\frac{\|A\|_F}{\sqrt{\eta_1}}+\sqrt{\eta_2}\right)$ holds, then $T_{K,n,1}(x_j^*)=0$ is satisfied for all $j\in[q]$ .
\end{theorem}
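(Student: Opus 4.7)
The plan is to invoke Lemma~\ref{lem:epp-const-TL} with the natural identification $(x_0, x_1, \ldots, x_q) \leftrightarrow (W, x_1, \ldots, x_q)$, where $x_j$ denotes the $j$-th column of $X$, and with $\mathcal{C} = \{(W,X) : W \ge 0,\, X \ge 0\}$. Assumption~(B1) holds immediately since decreasing a single entry of a nonnegative column keeps the pair in $\mathcal{C}$, so $(0, \ldots, -(x_j)_i e_i, \ldots, 0) \in \mathcal{F}((W,X);\mathcal{C})$ for every $i$. The substance of the proof is Assumption~(B2) with constant $\Gamma_j = \frac{\|a_j\|_2}{\sqrt{2}}\bigl(\|A\|_F/\sqrt{\eta_1} + \sqrt{\eta_2}\bigr)$.

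The first step I would take is to extract a priori bounds on $(W^*, X^*)$ from d-stationarity. Since $T_{1,n,1}$ is positively homogeneous of degree one, $T'_{1,n,1}(x_j^*; -x_j^*) = -T_{1,n,1}(x_j^*) \le 0$. Using the feasible direction $(-W^*, 0)$ together with convexity of $W \mapsto f(W, X^*)$, Lemma~\ref{lem:d-derivative-descent-lemma} combined with the d-stationary inequality $f'(W^*, X^*; -W^*, 0) \ge 0$ yields
\begin{equation}
\|A - W^*X^*\|_F^2 + \eta_1 \|W^*\|_F^2 \le \|A\|_F^2,
\end{equation}
so $\|W^*\|_F \le \|A\|_F/\sqrt{\eta_1}$. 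Analogously, for each $j$, using the single-column direction $(0, \ldots, -x_j^*, \ldots, 0)$ and convexity of $f(W^*, \cdot)$ in the $j$-th column of $X$, one obtains the column-wise Pythagorean bound
\begin{equation}
\|a_j - W^*x_j^*\|_2^2 + \eta_2 \|x_j^*\|_2^2 \le \|a_j\|_2^2.
\end{equation}

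With these bounds in hand, since $\nabla_{x_j} f(W^*, X^*) = -W^{*\top}(a_j - W^*x_j^*) + \eta_2 x_j^*$ and any feasible $d_j \in \{-1, 0, 1\}^n$ with $\|d_j\|_1 = 1$ is $\pm e_k$ for some $k$, it is enough to bound $\|\nabla_{x_j} f(W^*, X^*)\|_\infty$ by $\Gamma_j$. Writing the $k$-th coordinate as the $\mathbb{R}^{p+1}$ inner product of $(-w_k^*, \sqrt{\eta_2})$ with $(a_j - W^*x_j^*, \sqrt{\eta_2}(x_j^*)_k)$, where $w_k^*$ is the $k$-th column of $W^*$, and applying Cauchy-Schwarz together with the two a priori bounds produces the estimate from which $\Gamma_j$ follows; Lemma~\ref{lem:epp-const-TL} then delivers $T_{1,n,1}(x_j^*) = 0$.

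The part I expect to be most delicate is pinning down the constant $1/\sqrt{2}$ in $\Gamma_j$: a naive triangle-plus-Cauchy-Schwarz yields $\|a_j\|_2\bigl(\|A\|_F/\sqrt{\eta_1} + \sqrt{\eta_2}\bigr)$, which is larger than $\Gamma_j$ by a factor of $\sqrt{2}$. The sharp factor presumably arises from exploiting the two Pythagorean bounds jointly (perhaps together with $\|X^*\|_F \le \|A\|_F/\sqrt{\eta_2}$ derived from the direction $(0, -X^*)$), so that the sums of squares appearing in a Cauchy-Schwarz step can be rewritten via the elementary inequality $(u+v)/\sqrt{2} \le \sqrt{u^2 + v^2}$ into the prescribed sum-form of $\Gamma_j$.
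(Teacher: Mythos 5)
Your overall architecture matches the paper's: verify (B1) for the nonnegativity constraint, bound $\|\nabla_{x_j}f(W^*,X^*)\|_\infty$ by $\Gamma_j$, and invoke Lemma \ref{lem:epp-const-TL}, with the a priori bounds on $W^*$ and $x_j^*$ extracted from d-stationarity along the directions $(-W^*,0)$ and $(0,\ldots,-x_j^*,\ldots,0)$. However, there is a genuine gap at exactly the point you flag as delicate, and your proposed repair does not close it. You derive the a priori bounds using only convexity (Lemma \ref{lem:d-derivative-descent-lemma}), which gives $\|W^*\|_F\le\|A\|_F/\sqrt{\eta_1}$ and $\|a_j-W^*x_j^*\|_2^2+\eta_2\|x_j^*\|_2^2\le\|a_j\|_2^2$. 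The paper instead exploits that $f$ is $\eta_1$-\emph{strongly} convex in $W$ and $\eta_2$-\emph{strongly} convex in $x_j$ (coming from the Frobenius regularizers), so that the first-order condition $\nabla_Wf(W^*,X^*)\bullet(-W^*)\ge0$ combined with the strong-convexity inequality yields the sharper bounds
\begin{equation}
\|W^*\|_F\le\frac{\|A\|_F}{\sqrt{2\eta_1}},\qquad \eta_2\|x_j^*\|_2^2+\frac{1}{2}\|a_j-W^*x_j^*\|_2^2\le\frac{1}{2}\|a_j\|_2^2,
\end{equation}
i.e.\ each bound gains a factor $1/\sqrt{2}$ over yours. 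With these, the plain estimate $\|\nabla_{x_j}f(W^*,X^*)\|_2\le\|W^*\|_F\|W^*x_j^*-a_j\|_2+\eta_2\|x_j^*\|_2$ already gives exactly $\Gamma_j=\frac{\|a_j\|_2}{\sqrt{2}}\bigl(\frac{\|A\|_F}{\sqrt{\eta_1}}+\sqrt{\eta_2}\bigr)$; no clever joint Cauchy--Schwarz is needed.

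Your suggested fix cannot work as stated: writing the $k$-th coordinate as an inner product in $\mathbb{R}^{p+1}$ and applying Cauchy--Schwarz with your convexity-only bounds gives at best $\|a_j\|_2\sqrt{\|A\|_F^2/\eta_1+\eta_2}$, and the inequality $(u+v)/\sqrt{2}\le\sqrt{u^2+v^2}$ runs in the wrong direction --- it shows this quantity is at \emph{least} $\Gamma_j$ (with equality only when $\|A\|_F/\sqrt{\eta_1}=\sqrt{\eta_2}$), not at most $\Gamma_j$. In fact, optimizing $\|W^*\|_F\|a_j-W^*x_j^*\|_2+\eta_2\|x_j^*\|_2$ subject only to your two convexity-based constraints shows that $\Gamma_j$ is unreachable by any rearrangement of those bounds; the strong-convexity step is the missing ingredient. (The auxiliary bound $\|X^*\|_F\le\|A\|_F/\sqrt{\eta_2}$ from the direction $(0,-X^*)$ is not needed.) Everything else in your outline --- the block identification, the verification of (B1), the form of $\nabla_{x_j}f$, and the reduction of (B2) to an $\ell_\infty$ gradient bound --- is consistent with the paper's proof.
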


The simplex $\{x\in\mathbb{R}^n\mid\mathbf{1}^\top x=1,~ x\ge0\}$ does not satisfy Assumption (B1).
However, one can derive an exact penalty parameter of it.
The proof is included in \ref{sec:proofs}.

\begin{theorem}\label{thm:sparse-port}
Let $x^*$ be a d-stationary point of \eqref{problem:const-TL} with $\mathcal{C}=\{x\in\mathbb{R}^n\mid\mathbf{1}^\top x=1,~ x\ge0\},~ L=1,~ n_0=0$, and $K\ge1$.
Suppose that $f$ is a continuous differentiable function on $\mathcal{O}\subset\mathbb{R}^n$, where $\mathcal{O}$ is an open set such that $\mathcal{C}\subset\mathcal{O}$.
It holds that $T_{K,n,1}(x^*)=0$ if $\gamma>\sqrt{2}M'$, where $M'$ is modulus of Lipschitz continuity of $f$ on $\mathcal{C}$ under the $\ell_2$ norm.
In addition to the continuous differentiability, suppose that $f$ is $M$-smooth and $0\in\mathcal{O}$.
Then, $x^*$ satisfies $T_{K,n,1}(x^*)=0$ if $\gamma>\min\left\{\sqrt{2}M',\sqrt{2}\left(\|\nabla f(0)\|_2+M\right)\right\}$ holds.
\end{theorem}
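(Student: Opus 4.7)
The plan is to argue by contradiction, mimicking the structure of the last part of the proof of Lemma \ref{lem:epp-GTL}, but building a two-coordinate exchange direction so that the descent stays inside the simplex (which is why Assumption (B1) is not invoked here).

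Assume $T_{K,n,1}(x^*)>0$. Since $x^*\in\mathcal{C}$ yields $x^*\ge 0$ and $\mathbf{1}^\top x^*=1$, pick a minimizing trimming $\Lambda^*\subset[n]$ with $|\Lambda^*|=n-K$ and $T_{K,n,1}(x^*)=\sum_{i\in\Lambda^*}x_i^*>0$; select $i'\in\Lambda^*$ with $x_{i'}^*>0$, and using $K\ge 1$ any $i''\in[n]\setminus\Lambda^*$. Set $d\coloneqq -e_{i'}+e_{i''}$, so $\|d\|_2=\sqrt{2}$. I would then check $d\in\mathcal{F}(x^*;\mathcal{C})$: $\mathbf{1}^\top d=0$, $d_j=0$ for $j\notin\{i',i''\}$, $d_{i''}=1\ge 0$, and $x_{i'}^*>0$ ensures $x^*+\varsigma d\in\mathcal{C}$ for all sufficiently small $\varsigma>0$.

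Next I would show $T_{K,n,1}'(x^*;d)\le -1$ directly from the definition of the trimmed norm as a minimum: for small $\varsigma>0$,
\begin{equation}
T_{K,n,1}(x^*+\varsigma d)\le \sum_{i\in\Lambda^*}|x_i^*+\varsigma d_i|=(x_{i'}^*-\varsigma)+\sum_{i\in\Lambda^*\setminus\{i'\}}x_i^*=T_{K,n,1}(x^*)-\varsigma,
\end{equation}
since $i''\notin\Lambda^*$ so no extra term appears. Combined with d-stationarity, $f'(x^*;d)+\gamma T_{K,n,1}'(x^*;d)\ge 0$ and therefore $f'(x^*;d)\ge\gamma$.

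For the Lipschitz part, from $|f(x^*+\varsigma d)-f(x^*)|\le M'\|\varsigma d\|_2$ (valid because $x^*+\varsigma d\in\mathcal{C}$ for small $\varsigma$), dividing by $\varsigma$ and passing to the limit gives $f'(x^*;d)\le\sqrt{2}M'$, which contradicts $\gamma>\sqrt{2}M'$. For the $M$-smooth part, since $0\in\mathcal{O}$ and $x^*\in\mathcal{O}$, the global Lipschitz condition on $\nabla f$ gives $\|\nabla f(x^*)\|_2\le\|\nabla f(0)\|_2+M\|x^*\|_2\le\|\nabla f(0)\|_2+M$, where the last step uses $\|x^*\|_2\le\|x^*\|_1=1$; hence $f'(x^*;d)=\nabla f(x^*)^\top d\le\sqrt{2}(\|\nabla f(0)\|_2+M)$, contradicting the other half of the assumed lower bound on $\gamma$. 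The main obstacle is picking the direction $d$ correctly so that (i) it stays feasible for the simplex (the pair $(i',i'')$ exchange is the standard trick), and (ii) the trimmed norm actually decreases at rate $\ge 1$, which is why $i''$ must be chosen outside $\Lambda^*$; once this is set up, the rest is Cauchy--Schwarz plus the triangle inequality.
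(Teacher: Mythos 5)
Your proof is correct and follows essentially the same route as the paper: argue by contradiction with the pair-exchange direction $e_{i''}-e_{i'}$, where $i'$ is a nonzero coordinate in a minimizing trimming set and $i''$ lies outside it (existence guaranteed by $K\ge1$), so that the trimmed norm decreases at unit rate while feasibility in the simplex is preserved, and then bound $f'(x^*;d)$ by $\sqrt{2}M'$ via Lipschitz continuity and by $\sqrt{2}(\|\nabla f(0)\|_2+M)$ via Cauchy--Schwarz and $\|x^*\|_2\le\|x^*\|_1=1$. The only cosmetic differences are that the paper scales the direction by $x_{i'}^*$ and invokes Lemma \ref{lem:d-derivative-T_K} for the directional derivative of $T_{K,n,1}$, whereas you bound it directly from the min definition; both yield the same contradiction.
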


Finally, we mention the relationship with the existing results.
The exact penalty parameters of the problem \eqref{problem:const-TL} presented by \citet{gotoh2018dc} are applicable only at globally optimal solutions.
As in the previous section, our exact penalty parameters are stronger in two ways, namely, ours are not only valid at d-stationary points, but also the thresholds are no greater than the existing ones even under weaker assumptions\footnote{The exact penalty parameter by \citet[Corollary 3]{gotoh2018dc} is written as $\sqrt{2}\|\nabla f(0)\|_2-(\sqrt{2}+1)M$, but there is a typo; the correct one is $\sqrt{2}\|\nabla f(0)\|_2+(\sqrt{2}+1)M$.}. 
\citet{wang2021clustering} derived an exact penalty parameter of \eqref{problem:NMF-clustering} at d-stationary points, whereas boundedness of $(W^*,X^*)$ is imposed a priori.
We succeed in removing the boundedness assumption in Theorem \ref{thm:NMF-clustering}.

\section{Penalized Problem by Truncated Nuclear Norm}\label{sec:truncated-nuclear}
In this section, we consider parallel results to the previous sections for the rank constraint.

\subsection{Truncated Nuclear Norm and Rank-Constrained Formulation}
For a matrix $X\in\mathbb{R}^{m\times n}$, the {\it truncated nuclear norm} is defined by
\begin{equation}
\mathcal{T}_K(X)\coloneqq\sum_{i=K+1}^q\sigma_i(X),
\end{equation}
where $K\in\{0,\ldots,q-1\}$ and $q\coloneqq\min\{m,n\}$.
It holds that $\mathcal{T}_K(X)\ge0$ for any $X$, and $\mathcal{T}_K(X)=0$ if and only if
\begin{equation}\label{ineq:simple-rank}
\mathrm{rank}(X)\le K.
\end{equation}
Thus, we can view that the truncated nuclear norm measures the violation of the rank constraint \eqref{ineq:simple-rank}, which was first pointed out by \citet{gao2010majorized}.
Obviously, $\mathcal{T}_K$ is reduced to the nuclear norm when $K=0$.
Note also that $\mathcal{T}_K$ is directionally differentiable since $\mathcal{T}_K$ is the difference of the nuclear norm and the Ky Fan $K$ norm \citep{gotoh2018dc}.
We consider the following minimization problem:
\begin{equation}\label{problem:truncated-nuclear}
\underset{X\in\mathbb{R}^{m\times n}}{\mbox{minimize}} \quad f(X)+\gamma\mathcal{T}_K(X)+\delta_\mathcal{C}(X),
\end{equation}
where $\gamma>0,~ \mathcal{C}\subset\dom f$.
This formulation involves a lot of existing problems penalized by the truncated nuclear norm \citep{gao2010majorized,zhang2012matrix,hu2012fast,hu2015large,oh2015partial,hong2016online,zhang2016robust,bi2016error,gotoh2018dc,liu2020exact,qian2023calmness,lu2023exact}.
To the best of our knowledge, the first penalized problems via the truncated nuclear norm were proposed independently by \citet{gao2010majorized} (with some constraints) and \citet{zhang2012matrix}.
As is the case for the generalized trimmed lasso \eqref{problem:GTL}, any optimal (resp. locally optimal, d-stationary) solution of \eqref{problem:truncated-nuclear} satisfying the constraint \eqref{ineq:simple-rank} (or $\mathcal{T}_K(X)=0$) is optimal (resp. locally optimal, d-stationary) to the problem
\begin{align}
\underset{X\in\mathbb{R}^{m\times n}}{\mbox{minimize}} & \quad f(X)+\delta_\mathcal{C}(X)\\
\text{subject to}  & \quad \mathrm{rank}(X)\le K.
\end{align}
The following remarkable applications are included in the problem \eqref{problem:truncated-nuclear}.

\begin{example}[Small rank linear regression \citep{zhang2012matrix,hu2012fast,gotoh2018dc}]\label{ex:small-rank-ols}
\citet{gotoh2018dc} considered the following problem:
\begin{equation}\label{problem:small-rank-ols}
\underset{X\in\mathbb{R}^{m\times n}}{\mbox{minimize}} \quad \frac{1}{2}\|\mathcal{A}(X)-b\|_2^2+\gamma\mathcal{T}_K(X),
\end{equation}
where $b\in\mathbb{R}^p,~ A_1,\ldots,A_p\in\mathbb{R}^{m\times n}$ and $\mathcal{A}(X)=(A_1\bullet X,\ldots,A_p\bullet X)^\top$.
\citet{zhang2012matrix} and \citet{hu2012fast} proposed the matrix completion problem that is the special case of this problem with $b=(M_{i,j})_{(i,j)\in\Omega}$ and $\mathcal{A}(X)=(X_{i,j})_{(i,j)\in\Omega}$, where $M$ is a given $m\times n$ matrix and $\Omega\subset[m]\times[n]$.
\end{example}

\begin{example}[Multi-class classification \citep{hu2015large}]\label{ex:multi-class}
Let $b_1,\ldots,b_q\in[n]$ and $a_1,\ldots,a_q\in\mathbb{R}^m$ denote, respectively, the class labels and features vectors of $q$ samples. To find $n$-class classifiers, \citet{hu2015large} considered the following problem:
\begin{equation}\label{problem:multi-class}
\underset{X\in\mathbb{R}^{m\times n}}{\mbox{minimize}} \quad \sum_{j\in[q]}\log\Big(1+\sum_{l\in[n]\setminus\{b_j\}}\exp(a_j^\top x_l-a_j^\top x_{b_j})\Big)+\gamma\mathcal{T}_K(X),
\end{equation}
where $x_l$ denotes the $l$th column of $X$.
\end{example}

\begin{example}[Robust principal component analysis \citep{oh2015partial,zhang2016robust}]\label{ex:robust-PCA}
\citet{oh2015partial} and \citet{zhang2016robust} considered the following problem:
\begin{equation}\label{problem:robust-PCA}
\underset{X\in\mathbb{R}^{m\times n}}{\mbox{minimize}} \quad \|A-X\|_1+\gamma\mathcal{T}_K(X),
\end{equation}
where $A\in\mathbb{R}^{m\times n}$.
\end{example}

\begin{example}[Semidefinite box constrained problem with quadratic objective \citep{liu2020exact}]\label{ex:sdb-const-quad}
\citet{liu2020exact} considered the following problem:
\begin{equation}\label{problem:sdb-const-quad}
\underset{X\in\mathbb{R}^{n\times n}}{\mbox{minimize}} \quad \frac{1}{2}\|H\circ(X-A)\|_F^2+\gamma\mathcal{T}_K(X)+\delta_\mathcal{C}(X),
\end{equation}
where $\mathcal{C}=\{X\in\mathcal{S}^n\mid X\succeq 0,~ I-X\succeq 0\},~ A\in\mathcal{S}^n$, and $H\in\mathcal{S}^n$ is a weight matrix whose entries are nonnegative.
The spherical sensor localization problem and the nearest low-rank correlation problem are included in this problem (see \citet[Section 6]{liu2020exact}).
\end{example}

We remark that unlike the generalized trimmed lasso \eqref{problem:GTL} the equivalence of the local optimality and the d-stationarity does not hold in the problem \eqref{problem:truncated-nuclear} even when $f$ is a convex function.
To see this, let us consider the special case of \eqref{problem:truncated-nuclear}
\begin{align}\label{problem:counterex-truncated-nuclear}
\underset{X\in\mathbb{R}^{2\times2}}{\mbox{minimize}} & \quad f(X)+\gamma\mathcal{T}_1(X)=\underbrace{ \begin{psmallmatrix}
-\gamma & 0 \\
0 & 0
\end{psmallmatrix}
\bullet X}_{f(X)}+\gamma\underbrace{\sigma_2(X)}_{\mathcal{T}_1(X)}.
\end{align}
For $X^*=\begin{psmallmatrix}
1 & 0 \\
0 & 2
\end{psmallmatrix}
$, we have
\begin{equation}
\sigma_2(X^*+M)=\sigma_2(X^*)+
\begin{psmallmatrix}
1 & 0 \\
0 & 0
\end{psmallmatrix}
\bullet M+O(\|M\|_F^2)
\end{equation}
from the result of \citet[Subsection 5.1]{lewis2005nonsmooth}.
This implies that $\sigma_2$ is differentiable at the point $X^*$ and the gradient is $\nabla\sigma_2(X^*)=\begin{psmallmatrix}
1 & 0 \\
0 & 0
\end{psmallmatrix}$, so $X^*$ is a d-stationary point of \eqref{problem:counterex-truncated-nuclear}.
On the other hand, with the direction $Y=\begin{psmallmatrix}
2 & -1 \\
-1 & 2
\end{psmallmatrix}$ and a step size $\xi>0$, consider the perturbed value 
\begin{align}
h(\xi)\coloneqq &f(X^*+\xi Y)+\gamma\sigma_2(X^*+\xi Y)\\
= &-\gamma-2\gamma\xi+\frac{\gamma}{2}(3+4\xi-\sqrt{4\xi^2+1})\\
= &\frac{\gamma}{2}(1-\sqrt{4\xi^2+1}),
\end{align}
where the second equality follows from a simple eigenvalue calculation.
Since $h'(\xi)=-\frac{2\gamma\xi}{\sqrt{4\xi^2+1}}<0$, $X^*$ is not a local minimizer of \eqref{problem:counterex-truncated-nuclear}.
We also see by Corollary \ref{cor:local-opt<=>d-stationary} that $\mathcal{T}_1:\mathbb{R}^{2\times2}\to\mathbb{R}$ cannot be represented as the pointwise minimum of a finite number of convex functions.

\subsection{Exact Penalties for Problems with Truncated Nuclear Norm}
We will derive exact penalty parameters at d-stationary points of the truncated nuclear penalized problem \eqref{problem:truncated-nuclear} in the remainder of this section.
The only existing exact penalty parameters of the problem \eqref{problem:truncated-nuclear} are for globally optimal solutions \citep{bi2016error,gotoh2018dc,liu2020exact,qian2023calmness,lu2023exact}.
Let us start with the following lemma, which is an analogue of Lemma \ref{lem:epp-GTL}.
Without loss of generality, in the following we assume that $q=m\le n$.

\begin{lemma}\label{lem:epp-truncated-nuclear}
Let $f+\delta_\mathcal{C}$ be directionally differentiable and $X^*$ be a d-stationary point of \eqref{problem:truncated-nuclear}.
Suppose that the following assumptions hold:
\begin{enumerate}[({C}1)]
\item For any $X\in\mathcal{C}$, there is a singular value decomposition $X=U\big[\mathrm{diag}\big(\sigma_1(X),\ldots,\sigma_q(X)\big),0\big]V^\top$ such that $-U\big[\mathrm{diag}\big(0,\ldots,0,\sigma_{K+1}(X),\ldots,\sigma_q(X)\big),0\big]V^\top\in\mathcal{F}(X;\mathcal{C})$;
\item There exists $\Gamma$ such that
\begin{align}\label{ineq:bounded-truncated-nuclear}
\sup_{\substack{\|D\|_*=1\\ D\in\mathcal{F}(X^*;\mathcal{C})}}f'(X^*;D)\le\Gamma.
\end{align}
\end{enumerate}
Then, the point $X^*$ satisfies $\mathcal{T}_K(X^*)=0$ if $\gamma>\Gamma$ holds.
\end{lemma}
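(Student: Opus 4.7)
The plan is to mimic the structure of the proof of Lemma \ref{lem:epp-GTL}: assume for contradiction that $\mathcal{T}_K(X^*)>0$, use assumption (C1) to produce a specific feasible direction $D\in\mathcal{F}(X^*;\mathcal{C})$ along which $\mathcal{T}_K$ drops linearly, and then combine the d-stationarity inequality with assumption (C2) to bound $\gamma$ from above by $\Gamma$, contradicting $\gamma>\Gamma$.

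More concretely, fix the singular value decomposition $X^*=U[\mathrm{diag}(\sigma_1(X^*),\ldots,\sigma_q(X^*)),0]V^\top$ guaranteed by (C1), and set
\begin{equation}
D\coloneqq -U\big[\mathrm{diag}(0,\ldots,0,\sigma_{K+1}(X^*),\ldots,\sigma_q(X^*)),0\big]V^\top.
\end{equation}
Since $\mathcal{T}_K(X^*)>0$ by assumption, at least one of $\sigma_{K+1}(X^*),\ldots,\sigma_q(X^*)$ is positive, so $D\neq 0$; and $D\in\mathcal{F}(X^*;\mathcal{C})$ by (C1). For all sufficiently small $\xi>0$ the matrix $X^*+\xi D=U[\mathrm{diag}(\sigma_1(X^*),\ldots,\sigma_K(X^*),(1-\xi)\sigma_{K+1}(X^*),\ldots,(1-\xi)\sigma_q(X^*)),0]V^\top$ already exhibits its singular values in decreasing order (because $(1-\xi)\sigma_{K+1}(X^*)\le\sigma_K(X^*)$ for small $\xi$), so
\begin{equation}
\mathcal{T}_K(X^*+\xi D)=(1-\xi)\sum_{i=K+1}^q\sigma_i(X^*)=(1-\xi)\mathcal{T}_K(X^*).
\end{equation}
Taking $\xi\searrow 0$ gives $\mathcal{T}_K'(X^*;D)=-\mathcal{T}_K(X^*)=-\|D\|_*$.

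Now apply d-stationarity at $X^*$ with the feasible direction $D$:
\begin{equation}
0\le f'(X^*;D)+\gamma\mathcal{T}_K'(X^*;D)=f'(X^*;D)-\gamma\|D\|_*.
\end{equation}
By positive homogeneity of the directional derivative and the fact that $\mathcal{F}(X^*;\mathcal{C})$ is a cone, $D/\|D\|_*$ is still in $\mathcal{F}(X^*;\mathcal{C})$, and assumption (C2) gives
\begin{equation}
\gamma\le f'\!\left(X^*;\frac{D}{\|D\|_*}\right)\le\Gamma,
\end{equation}
contradicting $\gamma>\Gamma$. Hence $\mathcal{T}_K(X^*)=0$.

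The main technical point to verify carefully is the directional derivative calculation $\mathcal{T}_K'(X^*;D)=-\mathcal{T}_K(X^*)$. This is delicate in general because the singular values of a perturbed matrix need not stay aligned with the singular subspaces of $X^*$ (especially when singular values coincide). The step is tractable here only because $D$ is constructed along the very singular vectors of $X^*$, so $X^*+\xi D$ keeps the same $U$ and $V$, and the singular values can be read off diagonally. Once this identification is justified — for instance by noting that, for small $\xi>0$, the diagonal entries of $X^*+\xi D$ in the $(U,V)$ basis are already nonnegative and non-increasing — the remainder of the argument is a direct transcription of Case~(i) of Lemma \ref{lem:epp-GTL}.
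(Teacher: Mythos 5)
Your proposal is correct and follows essentially the same route as the paper's proof: the same contradiction setup, the same direction $D=-U[\mathrm{diag}(0,\ldots,0,\sigma_{K+1}(X^*),\ldots,\sigma_q(X^*)),0]V^\top$ supplied by (C1), the same computation $\mathcal{T}_K'(X^*;D)=-\mathcal{T}_K(X^*)=-\|D\|_*$ (justified, as you note, because $X^*+\xi D$ keeps the same singular vectors and merely scales the trailing singular values), and the same use of positive homogeneity with (C2) to force $\gamma\le\Gamma$.
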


\begin{proof}
To derive a contradiction, we assume that $\mathcal{T}_K(X^*)>0$.
It holds that
\begin{equation}\label{ineq:stat-truncated-nuclear}
f'(X^*;D)+\gamma\mathcal{T}_K'(X^*;D)\ge0
\end{equation}
for any $D\in\mathcal{F}(X^*;\mathcal{C})$ because $X^*$ is a d-stationary point. 
Let $X^*=U\Sigma V^\top$ be a singular value decomposition of $X^*$ in Assumption (C1), that is, $-U\big[\mathrm{diag}\big(0,\ldots,0,\sigma_{K+1}(X^*),\ldots,\sigma_q(X^*)\big),0\big]V^\top\in\mathcal{F}(X^*;\mathcal{C})$.
Letting $\Sigma'=\big[\mathrm{diag}\big(0,\ldots,0,\sigma_{K+1}(X^*),\ldots,\sigma_q(X^*)\big),0\big]$, we see that
\begin{align}\label{ineq:d-derivative-tnn}
\begin{split}
\lefteqn{\mathcal{T}_K'(X^*;-U\Sigma'V^\top)}\\
&=\lim_{\eta\searrow0}\frac{\mathcal{T}_K(X^*-\eta U\Sigma'V^\top)-\mathcal{T}_K(X^*)}{\eta}\\
&=\lim_{\eta\searrow0}\frac{\mathcal{T}_K(U(\Sigma-\eta\Sigma')V^\top)-\mathcal{T}_K(X^*)}{\eta}\\
&=\lim_{\eta\searrow0}\frac{\mathcal{T}_K(U[\mathrm{diag}(\sigma_1(X^*),\ldots,\sigma_K(X^*),(1-\eta)\sigma_{K+1}(X^*),\ldots,(1-\eta)\sigma_q(X^*)),0]V^\top)-\mathcal{T}_K(X^*)}{\eta}\\
&=\lim_{\eta\searrow0}\frac{(1-\eta)\sum_{K+1}^q\sigma_i(X^*)-\mathcal{T}_K(X^*)}{\eta}\\
&=\lim_{\eta\searrow0}\frac{(1-\eta)\mathcal{T}_K(X^*)-\mathcal{T}_K(X^*)}{\eta}\\
&=\lim_{\eta\searrow0}\frac{-\eta\mathcal{T}_K(X^*)}{\eta}\\
&=-\mathcal{T}_K(X^*).
\end{split}
\end{align}
Note that $-U\Sigma'V^\top\neq0$ holds because the inequality $\mathcal{T}_K(X^*)>0$ is assumed.
Substituting $D=-U\Sigma'V^\top$ into \eqref{ineq:stat-truncated-nuclear} yields
\begin{equation}
f'(X^*;-U\Sigma'V^\top)\ge\gamma\mathcal{T}_K(X^*)=\gamma\sum_{i=K+1}^q\sigma_i(X^*)=\gamma\|-U\Sigma'V^\top\|_*.
\end{equation}
By the positive homogeneity of directional derivatives with respect to the direction and the assumption \eqref{ineq:bounded-truncated-nuclear}, we have
\begin{equation}
\gamma\le f'\left(X^*;\frac{-U\Sigma'V^\top}{\|-U\Sigma'V^\top\|_*}\right)\le\Gamma,
\end{equation}
which contradicts the inequality $\gamma>\Gamma$.
\end{proof}

Similar to the generalized trimmed lasso \eqref{problem:GTL} and the constrained trimmed lasso \eqref{problem:const-TL}, we see that ``the boundedness assumption for the gradient of $f$ at stationary points'' is essential in the problem \eqref{problem:truncated-nuclear}.
Using Lemma \ref{lem:epp-truncated-nuclear}, we first derive the following exact penalty parameter of the truncated nuclear penalized problem \eqref{problem:truncated-nuclear}.
See \ref{sec:proofs} for the proof.

\begin{theorem}\label{thm:epp-smooth-bounded-tn}
Suppose that the Assumption (C1) holds and $f$ is $M$-smooth on an open set $\mathcal{O}\subset\mathcal{C}$, where $\mathcal{O}$ contains the origin.
Let $X^*$ be a d-stationary point of \eqref{problem:truncated-nuclear} and $C>0$ be a constant such that $\|X^*\|_F\le C$.
Then, $X^*$ satisfies the constraint $\mathcal{T}_K(X^*)=0$ if $\gamma>\|\nabla f(0)\|_2+MC$ holds.
\end{theorem}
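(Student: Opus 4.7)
The plan is to verify Assumption (C2) of Lemma \ref{lem:epp-truncated-nuclear} with $\Gamma = \|\nabla f(0)\|_2 + MC$ and then invoke that lemma directly. Since $f$ is assumed $M$-smooth on the open set $\mathcal{O}$ containing both $0$ and $X^*$, $f$ is differentiable at $X^*$, so the directional derivative is the linear form $f'(X^*;D) = \nabla f(X^*)\bullet D$.

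The first step is to exploit the well-known duality between the spectral norm and the nuclear norm under the trace inner product: for any matrix $A\in\mathbb{R}^{m\times n}$,
\begin{equation}
A\bullet D \le \|A\|_2\,\|D\|_*.
\end{equation}
Applied to $A=\nabla f(X^*)$, this gives $f'(X^*;D) \le \|\nabla f(X^*)\|_2$ whenever $\|D\|_*=1$, regardless of whether $D\in\mathcal{F}(X^*;\mathcal{C})$, so the left-hand side of \eqref{ineq:bounded-truncated-nuclear} is bounded by $\|\nabla f(X^*)\|_2$.

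The second step is to bound $\|\nabla f(X^*)\|_2$ in terms of $\|\nabla f(0)\|_2$ and $C$. By the triangle inequality,
\begin{equation}
\|\nabla f(X^*)\|_2 \le \|\nabla f(0)\|_2 + \|\nabla f(X^*) - \nabla f(0)\|_2.
\end{equation}
Since the spectral norm is dominated by the Frobenius norm ($\sigma_1(A) \le \sqrt{\sum_i \sigma_i(A)^2}$), and the $M$-smoothness of $f$ is stated with respect to the inner-product-induced (Frobenius) norm, one has
\begin{equation}
\|\nabla f(X^*) - \nabla f(0)\|_2 \le \|\nabla f(X^*) - \nabla f(0)\|_F \le M\|X^*\|_F \le MC.
\end{equation}
Combining these gives $f'(X^*;D) \le \|\nabla f(0)\|_2 + MC =: \Gamma$ for every admissible $D$, so Assumption (C2) holds with this $\Gamma$. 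Lemma \ref{lem:epp-truncated-nuclear} then yields $\mathcal{T}_K(X^*)=0$ provided $\gamma > \Gamma$, which is exactly the hypothesis of the theorem.

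The only step needing minor care is justifying $\|\nabla f(X^*)-\nabla f(0)\|_F \le M\|X^*\|_F$, which implicitly uses that the segment from $0$ to $X^*$ lies in the open set $\mathcal{O}$ on which $M$-smoothness holds; this is the analogue of the convexity-of-domain requirement in the proof of Theorem \ref{thm:epp-smooth-bounded} and should be either part of the standing assumption on $\mathcal{O}$ or easily verifiable in all intended applications. Apart from that, the proof is a routine adaptation of the argument for Theorem \ref{thm:epp-smooth-bounded}, with the crucial replacement of the Cauchy--Schwarz step by the spectral/nuclear norm duality—this replacement is the only nontrivial matrix-analytic input.
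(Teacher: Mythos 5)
Your proposal is correct and matches the paper's own argument essentially line for line: verify (C2) with $\Gamma=\|\nabla f(0)\|_2+MC$ via the trace/spectral--nuclear duality (the paper's ``generalized Cauchy--Schwarz''), the triangle inequality together with $\|\cdot\|_2\le\|\cdot\|_F$, the $M$-smoothness bound, and $\|X^*\|_F\le C$, then invoke Lemma \ref{lem:epp-truncated-nuclear}. Your closing caveat about the segment from $0$ to $X^*$ is unnecessary here, since the paper defines $M$-smoothness directly as $\|\nabla f(X)-\nabla f(Y)\|\le M\|X-Y\|$ for all $X,Y\in\mathcal{O}$ (so only $0,X^*\in\mathcal{O}$ is needed, exactly as in the paper's proof).
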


We compare Theorem \ref{thm:epp-smooth-bounded-tn} with the result of \citet[Theorem 5]{gotoh2018dc}.
\begin{list}{}{}
\item[In \citep{gotoh2018dc}:] Assume that $f$ is $M$-smooth.
Let $X^*$ be an optimal solution of \eqref{problem:truncated-nuclear} and $C>0$ be a constant such that $\|X^*\|_F\le C$.
Then, $X^*$ satisfies the constraint $\mathcal{T}_K(X^*)=0$ if $\gamma>\|\nabla f(0)\|_F+\frac{3}{2}MC$
holds.
\end{list}
Theorem \ref{thm:epp-smooth-bounded-tn} is stronger than Theorem 5 of \citet{gotoh2018dc} in two ways: (i) our exact penalty parameter $\|\nabla f(0)\|_2+MC$ is derived at d-stationary points, which is easier to attain than global minimizers; (ii) ours is smaller than theirs. 
However, the boundedness assumption of the solution set is impractical in a more plausible way than the generalized trimmed lasso \eqref{problem:GTL}.
For example, for the matrix completion problem, which is a special case of Example \ref{ex:small-rank-ols}, $f$ is not strongly convex whenever $\Omega\subsetneq[m]\times[n]$ holds.
Therefore, in the following, we derive exact penalty parameters under realistic assumptions.
Theorems \ref{thm:epp-Lipschitz-tn} and \ref{thm:epp-small-rank-ols} are parallel to Theorems \ref{thm:epp-Lipschitz} and \ref{thm:epp-square}, respectively.
See \ref{sec:proofs} for the proofs.

\begin{theorem}\label{thm:epp-Lipschitz-tn}
Suppose that the Assumption (C1) holds, and that $f+\delta_\mathcal{C}$ is directionally differentiable and Lipschitz continuous on its domain with constant $M$ under the nuclear norm.
Then, any d-stationary point $X^*$ of \eqref{problem:truncated-nuclear} satisfies $\mathcal{T}_K(X^*)=0$ if $\gamma>M$ holds.
\end{theorem}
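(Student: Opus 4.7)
The plan is to view this as a direct analogue of Theorem \ref{thm:epp-Lipschitz} and to reduce it to an application of Lemma \ref{lem:epp-truncated-nuclear}. Since Assumption (C1) is given as a hypothesis, the only thing left is to verify Assumption (C2), namely, to produce a constant $\Gamma$ such that $f'(X^*;D)\le\Gamma$ for every $D\in\mathcal{F}(X^*;\mathcal{C})$ with $\|D\|_*=1$. Once (C2) holds with $\Gamma=M$, the conclusion follows immediately from Lemma \ref{lem:epp-truncated-nuclear}, because $\gamma>M=\Gamma$ forces $\mathcal{T}_K(X^*)=0$.

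To obtain the constant, I would invoke Lemma \ref{lem:d-derivative-lipschitz} applied to the composite function $f+\delta_\mathcal{C}$, whose domain is $\mathcal{C}$ and which is, by hypothesis, directionally differentiable and Lipschitz continuous with constant $M$ under the nuclear norm on that domain. The lemma then gives
\begin{equation}
(f+\delta_\mathcal{C})'(X^*;D)\le M\|D\|_*
\end{equation}
for every $D\in\mathcal{F}(X^*;\dom(f+\delta_\mathcal{C}))=\mathcal{F}(X^*;\mathcal{C})$.

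The one step that needs a small argument is the identification of $(f+\delta_\mathcal{C})'(X^*;D)$ with $f'(X^*;D)$ for feasible directions. For $D\in\mathcal{F}(X^*;\mathcal{C})$, there exists $\varsigma'>0$ such that $X^*+\varsigma D\in\mathcal{C}$ for all $\varsigma\in(0,\varsigma')$; hence $\delta_\mathcal{C}(X^*+\varsigma D)=0=\delta_\mathcal{C}(X^*)$ throughout that interval. Taking the directional derivative along $D$ therefore yields $(f+\delta_\mathcal{C})'(X^*;D)=f'(X^*;D)$, and combining with the Lipschitz bound gives $f'(X^*;D)\le M$ whenever $\|D\|_*=1$.

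The main (admittedly mild) obstacle is making sure the Lipschitz hypothesis is applied to the correct function: we cannot directly assume $f$ itself is globally Lipschitz on $\mathbb{R}^{m\times n}$, since $\mathcal{C}$ may restrict the domain and $f$ is only controlled on $\dom f\supset\mathcal{C}$. Using $f+\delta_\mathcal{C}$ and restricting to feasible directions sidesteps this cleanly, which is the same trick that underlies Theorem \ref{thm:epp-Lipschitz}. With that in place, the two bullets of Lemma \ref{lem:epp-truncated-nuclear} are satisfied with $\Gamma=M$ and the proof closes in one line.
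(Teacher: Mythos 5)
Your proposal is correct and follows essentially the same route as the paper: verify Assumption (C2) with $\Gamma=M$ via Lemma \ref{lem:d-derivative-lipschitz} applied to the Lipschitz hypothesis, then invoke Lemma \ref{lem:epp-truncated-nuclear}. Your explicit identification of $(f+\delta_\mathcal{C})'(X^*;D)$ with $f'(X^*;D)$ along feasible directions is a detail the paper leaves implicit, but it does not change the argument.
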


Theorem \ref{thm:epp-Lipschitz-tn} is stronger than the result by \citet[Corollary 4.7]{lu2023exact} for the unconstrained case in the sense that our result is for d-stationary points whereas their result is for global minimizers. 
Since Theorem \ref{thm:epp-Lipschitz-tn} does not require the boundedness of the set of optimal solutions but the boundedness of gradients of the first term $f$, concrete exact penalty parameters for Examples \ref{ex:multi-class} (multi-class classification) and Example \ref{ex:robust-PCA} (robust PCA) can be derived as immediate corollaries of the theorem. 

\begin{corollary}[Exact penalty parameter for Example \ref{ex:multi-class}: multi-class classification]\label{cor:multi-class}
Let $X^*$ be a d-stationary point of \eqref{problem:multi-class}.
If $\gamma>\sqrt{2}\sum_{j\in[q]} \|a_j\|_2$ holds, then $X^*$ satisfies $\mathcal{T}_K(X^*)=0$.
\end{corollary}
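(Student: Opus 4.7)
The plan is to apply Theorem \ref{thm:epp-Lipschitz-tn} with $\mathcal{C}=\mathbb{R}^{m\times n}$. In that setting $\mathcal{F}(X;\mathcal{C})=\mathbb{R}^{m\times n}$ for every $X$, so Assumption (C1) holds trivially, and the $C^\infty$ objective $f$ is certainly directionally differentiable on $\mathcal{C}$. Consequently, it suffices to exhibit a Lipschitz modulus $M=\sqrt{2}\sum_{j\in[q]}\|a_j\|_2$ for $f$ under the nuclear norm, after which Theorem \ref{thm:epp-Lipschitz-tn} yields the claimed threshold directly.

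Because the spectral norm is dual to the nuclear norm on $\mathbb{R}^{m\times n}$, via the fundamental theorem of calculus and the inequality $|\langle A,B\rangle|\le\|A\|_2\|B\|_*$, this Lipschitz modulus is bounded by $\sup_{X}\|\nabla f(X)\|_2$. Writing $f=\sum_{j\in[q]}g_j$ with $g_j(X)=\log\bigl(1+\sum_{l\neq b_j}\exp(a_j^\top x_l-a_j^\top x_{b_j})\bigr)$, I would introduce the softmax probabilities
\[
p_{j,l}=\frac{\exp(a_j^\top(x_l-x_{b_j}))}{Z_j}\ (l\neq b_j), \quad p_{j,b_j}=\frac{1}{Z_j}, \quad Z_j=1+\sum_{k\neq b_j}\exp(a_j^\top(x_k-x_{b_j})),
\]
and read off from the chain rule the rank-one identity $\nabla g_j(X)=a_j v_j^\top$, where $(v_j)_l=p_{j,l}$ for $l\neq b_j$ and $(v_j)_{b_j}=p_{j,b_j}-1$.

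Rank-oneness yields $\|\nabla g_j(X)\|_2=\|a_j\|_2\|v_j\|_2$. Since $(p_{j,l})_{l\in[n]}$ is a probability distribution, each $p_{j,l}\le 1-p_{j,b_j}$ for $l\neq b_j$, so $\sum_{l\neq b_j}p_{j,l}^2\le(1-p_{j,b_j})\sum_{l\neq b_j}p_{j,l}=(1-p_{j,b_j})^2$, and hence $\|v_j\|_2^2\le 2(1-p_{j,b_j})^2\le 2$. Summing $\|\nabla g_j(X)\|_2\le\sqrt{2}\|a_j\|_2$ over $j$ via the triangle inequality produces the desired spectral-norm bound on $\nabla f(X)$, completing the verification of the hypothesis of Theorem \ref{thm:epp-Lipschitz-tn}.

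I do not anticipate a serious obstacle: the one delicate point is aligning the Lipschitz requirement of Theorem \ref{thm:epp-Lipschitz-tn} with the nuclear norm by invoking spectral-nuclear duality rather than, say, a coordinatewise Frobenius estimate, and the rank-one structure of each $\nabla g_j(X)$ then makes the remaining simplex calculation $\sum_{l\neq b_j}p_{j,l}^2\le(1-p_{j,b_j})^2$ essentially automatic.
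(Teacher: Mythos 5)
Your proposal is correct and follows exactly the route the paper intends: the paper derives Corollary \ref{cor:multi-class} as an immediate consequence of Theorem \ref{thm:epp-Lipschitz-tn} (with $\mathcal{C}=\mathbb{R}^{m\times n}$, so Assumption (C1) is vacuous), leaving the Lipschitz-modulus computation implicit, and your rank-one gradient identity $\nabla g_j(X)=a_jv_j^\top$ together with $\|v_j\|_2\le\sqrt{2}$ and spectral--nuclear duality supplies precisely that omitted verification of $M=\sqrt{2}\sum_{j\in[q]}\|a_j\|_2$.
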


\begin{corollary}[Exact penalty parameter for Example \ref{ex:robust-PCA}: robust PCA]\label{cor:robust-PCA}
Let $X^*$ be a d-stationary point of \eqref{problem:robust-PCA}.
If $\gamma>\sqrt{mn}$ holds, then $X^*$ satisfies $\mathcal{T}_K(X^*)=0$.
\end{corollary}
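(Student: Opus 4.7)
The plan is to apply Theorem \ref{thm:epp-Lipschitz-tn} directly to the unconstrained problem \eqref{problem:robust-PCA}. Since $\mathcal{C}=\mathbb{R}^{m\times n}$, every direction is feasible, so $\mathcal{F}(X;\mathcal{C})=\mathbb{R}^{m\times n}$ and Assumption (C1) is automatic (for any singular value decomposition of $X$, the displacement sitting in (C1) lies in $\mathbb{R}^{m\times n}$). Also, $f(X)=\|A-X\|_1$ is convex and continuous on all of $\mathbb{R}^{m\times n}$, hence directionally differentiable everywhere. Thus the only work is to exhibit a Lipschitz modulus of $f$ with respect to the nuclear norm that is no larger than $\sqrt{mn}$.

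First I would use the triangle inequality for the entrywise $\ell_1$ norm,
\begin{equation}
\bigl|\|A-X\|_1-\|A-Y\|_1\bigr|\le\|X-Y\|_1,
\end{equation}
to reduce the problem to bounding $\|\cdot\|_1$ by the nuclear norm. Second, for any $Z\in\mathbb{R}^{m\times n}$ the Cauchy--Schwarz inequality applied to the $mn$ entries of $Z$ gives $\|Z\|_1\le\sqrt{mn}\,\|Z\|_F$. Third, since $\|Z\|_F^2=\sum_i\sigma_i(Z)^2\le\bigl(\sum_i\sigma_i(Z)\bigr)^2=\|Z\|_*^2$, we obtain $\|Z\|_F\le\|Z\|_*$. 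Chaining these three inequalities yields
\begin{equation}
\bigl|\|A-X\|_1-\|A-Y\|_1\bigr|\le\sqrt{mn}\,\|X-Y\|_*,
\end{equation}
which is exactly the Lipschitz property of $f+\delta_\mathcal{C}=f$ under the nuclear norm with constant $M=\sqrt{mn}$.

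With both hypotheses of Theorem \ref{thm:epp-Lipschitz-tn} verified, its conclusion immediately gives $\mathcal{T}_K(X^*)=0$ at any d-stationary point $X^*$ whenever $\gamma>\sqrt{mn}$, completing the proof. There is no real obstacle here: the entire content of the corollary is the norm comparison $\|Z\|_1\le\sqrt{mn}\,\|Z\|_*$, and the only thing to be careful about is to apply the bound to $Z=X-Y$ after using the reverse triangle inequality rather than trying to bound $\|A-X\|_1$ directly in terms of $\|X\|_*$.
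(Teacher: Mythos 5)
Your proposal is correct and matches the paper's intended argument: the corollary is stated there as an immediate consequence of Theorem \ref{thm:epp-Lipschitz-tn}, with the Lipschitz modulus $\sqrt{mn}$ coming from exactly the chain $\bigl|\|A-X\|_1-\|A-Y\|_1\bigr|\le\|X-Y\|_1\le\sqrt{mn}\,\|X-Y\|_F\le\sqrt{mn}\,\|X-Y\|_*$ together with the trivial verification of (C1) when $\mathcal{C}=\mathbb{R}^{m\times n}$. Nothing further is needed.
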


As for Example \ref{ex:small-rank-ols} (small-rank linear regression), the first term is not Lipschitz continuous, so Theorem \ref{thm:epp-Lipschitz-tn} can not be applied.
However, we can obtain an exact penalty result that is applicable for Example \ref{ex:small-rank-ols}.

\begin{theorem}\label{thm:epp-small-rank-ols}
Suppose that the Assumption (C1) holds and that $-X\in\mathcal{F}(X;\mathcal{C})$ for any $X\in\mathcal{C}$.
Let $f(X)=\frac{1}{2}\|\mathcal{A}(X)-b\|_2^2$, where $b\in\mathbb{R}^p,~ A_1,\ldots,A_p\in\mathbb{R}^{m\times n}$ and $\mathcal{A}(X)=(A_1\bullet X,\ldots,A_p\bullet X)^\top$.
Any d-stationary point $X^*$ of \eqref{problem:truncated-nuclear} satisfies $\mathcal{T}_K(X^*)=0$ if it is holds
\begin{equation}
\gamma>\|\mathcal{A}^*\|_{2,2}\|b\|_2,
\end{equation}
where $\mathcal{A}^*$ is the adjoint operator of $\mathcal{A}$ and $\|\mathcal{A}^*\|_{2,2}=\sup_{\|y\|_2=1}\|\mathcal{A}^*(y)\|_2$.
\end{theorem}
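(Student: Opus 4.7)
The plan is to mirror the strategy of Theorem \ref{thm:epp-square}(a), adapted to the matrix setting via Lemma \ref{lem:epp-truncated-nuclear}. The key is to verify Assumption (C2) with $\Gamma = \|\mathcal{A}^*\|_{2,2}\|b\|_2$, and the main work is to establish the a-priori bound $\|\mathcal{A}(X^*)-b\|_2 \le \|b\|_2$ at the d-stationary point.

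First I would apply d-stationarity along the direction $-X^*$, which is admissible because of the hypothesis $-X \in \mathcal{F}(X;\mathcal{C})$ for all $X \in \mathcal{C}$. This yields
\begin{equation}
f'(X^*;-X^*) + \gamma\, \mathcal{T}_K'(X^*;-X^*) \ge 0.
\end{equation}
Since $\mathcal{T}_K$ is positively homogeneous, $\mathcal{T}_K(X^*+\eta(-X^*)) = (1-\eta)\mathcal{T}_K(X^*)$ for $\eta\in(0,1)$, so $\mathcal{T}_K'(X^*;-X^*) = -\mathcal{T}_K(X^*) \le 0$. Combined with the convexity of $f$ (a squared loss) and Lemma \ref{lem:d-derivative-descent-lemma}, this gives
\begin{equation}
f(0) \ge f(X^*) + f'(X^*;-X^*) \ge f(X^*) + \gamma\,\mathcal{T}_K(X^*) \ge f(X^*),
\end{equation}
which rearranges to $\|\mathcal{A}(X^*)-b\|_2 \le \|b\|_2$.

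Second, I would verify Assumption (C2). Using $\nabla f(X^*) = \mathcal{A}^*(\mathcal{A}(X^*)-b)$ and the spectral/nuclear duality $|M\bullet D|\le \|M\|_2\|D\|_*$, for every $D$ with $\|D\|_*=1$ we have
\begin{equation}
f'(X^*;D) = \mathcal{A}^*(\mathcal{A}(X^*)-b)\bullet D \le \|\mathcal{A}^*(\mathcal{A}(X^*)-b)\|_2 \le \|\mathcal{A}^*\|_{2,2}\,\|\mathcal{A}(X^*)-b\|_2 \le \|\mathcal{A}^*\|_{2,2}\,\|b\|_2,
\end{equation}
where the last inequality uses the a-priori bound just established. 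Hence Assumption (C2) holds with $\Gamma = \|\mathcal{A}^*\|_{2,2}\|b\|_2$, and Lemma \ref{lem:epp-truncated-nuclear} forces $\mathcal{T}_K(X^*) = 0$ whenever $\gamma > \Gamma$.

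The two places that require some care are (i) the computation of $\mathcal{T}_K'(X^*;-X^*)$—this is essentially the identity $\mathcal{T}_K((1-\eta)X^*) = (1-\eta)\mathcal{T}_K(X^*)$, which was also the content of the derivation \eqref{ineq:d-derivative-tnn} in the proof of Lemma \ref{lem:epp-truncated-nuclear} but specialized to $\Sigma'$ replaced by the full $\Sigma$—and (ii) making sure one invokes the correct duality inequality $\langle M,D\rangle \le \|M\|_2\|D\|_*$ rather than a Frobenius-based Cauchy–Schwarz, which would yield a weaker (and incorrect) constant. Given the assumption $-X\in\mathcal{F}(X;\mathcal{C})$ and the convexity of the squared loss, neither step presents a substantive obstacle; the novelty lies only in packaging them so as to pass the a-priori gradient bound into the framework of Lemma \ref{lem:epp-truncated-nuclear}.
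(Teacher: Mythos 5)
Your proposal is correct and follows essentially the same route as the paper's proof: d-stationarity along the direction $-X^*$ together with positive homogeneity of $\mathcal{T}_K$, convexity of $f$ via Lemma \ref{lem:d-derivative-descent-lemma}, and nonnegativity of $\mathcal{T}_K$ give the bound $\|\mathcal{A}(X^*)-b\|_2\le\|b\|_2$, after which the spectral--nuclear duality verifies Assumption (C2) with $\Gamma=\|\mathcal{A}^*\|_{2,2}\|b\|_2$ and Lemma \ref{lem:epp-truncated-nuclear} concludes. No gaps.
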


Finally, we note that, by using Theorems \ref{thm:epp-smooth-bounded-tn} and \ref{thm:epp-Lipschitz-tn}, one can obtain a stronger exact penalty result than \citet[Theorem 3.1]{liu2020exact} for the problem \eqref{problem:truncated-nuclear} with $\mathcal{C}=\{X\in\mathcal{S}^n\mid X\succeq 0,~ CI-X\succeq 0\}$.

\section{Discussions related to algorithms}\label{sec:discussions}
In the previous sections, we have shown that under the mild assumptions any d-stationary point satisfies the cardinality or rank constraint if $\gamma$ is taken to be large enough.
To obtain a d-stationary point, the key point is that the proximal mappings of the trimmed $\ell_1$ norm and truncated nuclear norm is explicitly represented, though they are nonconvex and non-separable unlike $\ell_1$ norm.
The proximal mapping of $g$ is defined as follows:
\begin{align}
\mathrm{prox}_g(x)\coloneqq\argmin_{z\in\mathbb{E}}\Big\{g(z)+\frac{1}{2}\|z-x\|_2^2\Big\}.
\end{align}
In fact, the proximal mappings of $\gamma'T_{K,m,p}$, $\gamma'\mathcal{T}_K$, and $\gamma'T_{K,n,1}+\eta'\|\cdot\|_1$ have been derived by \citet[Subsection 4.2]{yagishita2024pursuit}, \citet[Theorem 1]{oh2015partial}, and \citet[Proposition 2.4]{luo2013new}, respectively.
\citet{banjac2017novel} and \citet[Theorem 4.4]{liu2020exact} have also derived the proximal mapping of $\gamma'T_{K,n,1}+\delta_{[-C,C]^n}$, $\gamma'T_{K,n,1}+\delta_{[0,C]^n}$, and $\gamma'\mathcal{T}_K+\delta_{\mathcal{C}}$, respectively, where $C>0$ and $\mathcal{C}=\{X\in\mathcal{S}^n\mid X\succeq 0,~ I-X\succeq 0\}$.
These explicit formulas of the proximal mappings can be used to construct efficient algorithms for obtaining d-stationary points.

As proximal map-based algorithms for solving the generalized trimmed lasso \eqref{problem:GTL}, the constrained trimmed lasso \eqref{problem:const-TL}, and the truncated nuclear penalized problem \eqref{problem:truncated-nuclear}, the {\it proximal alternating direction method of multipliers} (proximal ADMM) \citep{li2015global} and the {\it proximal gradient method} (PGM) are considered. 
\citet{bertsimas2017trimmed} and \citet{yagishita2024pursuit} used the proximal ADMM for solving special cases of the generalized trimmed lasso \eqref{problem:GTL}.
\citet{luo2013new} and \citet{huang2015two} solved the problem \eqref{problem:sparse-ols} by the PGM.
For solving special cases of the generalized trimmed lasso \eqref{problem:GTL} and the problem \eqref{problem:truncated-nuclear} with $\mathcal{C}=\{X\in\mathcal{S}^n\mid X\succeq 0,~ I-X\succeq 0\}$, \citet{lu2018sparse}, \citet{nakayama2021superiority}, and \citet{liu2020exact} used the SpaRSA \citep{wright2009sparse,gong2013general}, which is a variant of the PGM exploiting a non-monotone linear search \citep{grippo1986nonmonotone} and the Barzilai-Borwein rule \citep{barzilai1988two}.
The global convergence result of the proximal ADMM to a d-stationary point for the penalized problem in this paper is given in the same as Subsection 4.4 of \citet{yagishita2024pursuit}.
For the PGM, the global convergence to a d-stationary point was given by \citet[Theorem 1]{razaviyayn2013unified} and \citet[Theorem 1]{nakayama2021superiority}.
\citet{liu2020exact} showed an exact penalty result for global minimizers, and that the SpaRSA converges to a l(imiting)-stationary point, which is a weaker stationarity notion than d-stationarity \citep{cui2018composite}, of the problem \eqref{problem:truncated-nuclear} with $f$ is $M$-smooth and $\mathcal{C}=\{X\in\mathcal{S}^n\mid X\succeq 0,~ I-X\succeq 0\}$. 
While there was an inconsistency between the exact penalty result and the convergence result in \citet{liu2020exact}, our exact penalty results for d-stationarity fill the gap. 
The same is true for the inconsistency in \citet{nakayama2021superiority}.

Besides the proximal ADMM and the PGM, it is known that the EDCA \citep{pang2017computing}, the EPDCA \citep{lu2019enhanced}, and the NEPDCA \citep{lu2019nonmonotone} can also obtain a d-stationary point (see those papers for the details).
However, while those algorithms are also applicable to the problems penalized by the trimmed $\ell_1$ norm with $p=1$, it seems difficult to apply to the generalized trimmed lasso \eqref{problem:GTL} with $p_l>1$ for some $l\in[L]$ and the problems with the truncated nuclear norm penalty \eqref{problem:truncated-nuclear}.
A homotopy method proposed by \citet{amir2021trimmed} for solving the problem \eqref{problem:sparse-ols} can be extended to for the generalized trimmed lasso \eqref{problem:GTL}.
However, this algorithm has only yielded convergence results to a d-stationary point or ambiguous point.\footnote{For the generalized trimmed lasso \eqref{problem:GTL}, a point $(x_1,\ldots,x_L)$ is called ambiguous point if $\|(D_lx_l-c_l)_{(K)}\|_2=\|(D_lx_l-c_l)_{(K+1)}\|_2$ holds for some $l\in[L]$.}

Since the trimmed $\ell_1$ norm and the truncated nuclear norm are represented by the difference of two convex (DC) functions (see, e.g., \citet{gotoh2018dc}), one might consider applying the DC algorithm (DCA) and then obtaining a critical point, which is also a weaker stationarity notion than d-stationarity \citep{cui2018composite}.
However, in the context of the exact penalty, which often take a large penalty parameter, the use of the DCA is not preferred.
To see this, let us consider the generalized trimmed lasso \eqref{problem:GTL} with $L=1,~ n_0=0,~ p=1,~ D=I,~ c=0,~ K\ge1$ and suppose also that $f$ is a differentiable convex function.
By using the largest-$K$ norm
\begin{equation}
    \|x\|_{\langle K\rangle}\coloneqq\max_{\substack{\Lambda\subset[n]\\|\Lambda|=K}}\sum_{i\in\Lambda}|x_i|,
\end{equation}
the trimmed $\ell_1$ norm is represented as $T_{K,n,1}(x)=\|x\|_1-\|x\|_{\langle K\rangle}$.
We call $x^*\in\mathbb{R}^n$ a critical point of \eqref{problem:GTL} if it holds that
\begin{equation}
    0\in\nabla f(x^*)+\gamma\partial\|x^*\|_1-\gamma\partial\|x^*\|_{\langle K\rangle},
\end{equation}
where $\partial\|x^*\|_1$ and $\partial\|x^*\|_{\langle K\rangle}$ are the subdifferential sets of the $\ell_1$ norm and the largest-$K$ norm at $x^*$, respectively.
As the subdifferential of a norm at $x=0$ is its dual norm unit ball (see, e.g., Example 3.3 of \citet{beck2017first}), unfortunately, the origin is always a critical point of \eqref{problem:GTL} provided that $\gamma\ge\|\nabla f(0)\|_\infty$.
On the other hand, since $T_K'(0;\pm e_i)=0$ for any $i\in[n]$, the origin is never a d-stationary point of \eqref{problem:GTL} for any $\gamma>0$ unless $\nabla f(0)=0$.

\section{Concluding Remarks}\label{sec:conclusion}
In this paper, we have extended the trimmed lasso to the generalized trimmed lasso and shown results of exact penalization for the generalized trimmed lasso and the problem penalized by the truncated nuclear norm under weaker assumptions than in the existing studies.
The unified analysis for the generalized trimmed lasso reveals the essential assumption on the loss function term for the existence of the exact penalty parameter.
In particular, to the best of our knowledge, no exact penalty results at other than globally optimal solutions have been known for the truncated nuclear norm.
As a result, we have seen that the existing local search algorithms such as the proximal ADMM and proximal gradient algorithms yield a point that satisfies the (structured) sparsity or rank constraint. 
On the other hand, we have not been able to derive an exact penalty parameter at d-stationary points for penalized problems that have not-simple constraints such as the one addressed in \citet{gao2010majorized}.
We left the derivation of the exact penalty parameters at stationary points of the constrained problems for future works, although \citet{bi2016error} and \citet{qian2023calmness} have derived exact penalties at globally optimal solutions of some of them.
We have also shown the relationship between local optimality and d-stationarity of these problems.
By the equivalence result, d-stationary points of the generalized trimmed lasso that are attainable by the existing algorithms are also locally optimal if the loss function $f$ is convex. 
Furthermore, our equivalence result between local minimizers and d-stationary points is valid also for the more general problem whose objective function is defined by the pointwise minimum of finitely many convex functions, so it might be useful for formulations in other contexts.

\section*{Acknowledgments}
This work was supported in part by the Research Institute for Mathematical Sciences, an International Joint Usage/Research Center located in Kyoto University.
Jun-ya Gotoh is supported in part by JSPS KAKENHI Grant 20H00285 and 24K01113.

\appendix
\def\thesection{Appendix \Alph{section}}

\section{Proofs}\label{sec:proofs}
In this section, we provide the proofs omitted in the main body of this paper.

\subsection{Proof of Corollary \ref{cor:trend-filtering}}
\begin{proof}
Considering that $D^{(2,n)}$ is the second-order difference matrix, we see that there is equivalence between $D^{(2,n)}\overline{x}=0$ being satisfied and $\overline{x}$ being a linear time series.
The minimizer of the optimization problem
\begin{align}
\underset{\overline{x}\in\mathbb{R}^{n}}{\mbox{minimize}} & \qquad \|b-\overline{x}\|_2 \\
\text{subject to} & \qquad D^{(2,n)}\overline{x}=0
\end{align}
is $\hat{b}=X(X^\top X)^{-1}X^\top b$, where
\begin{equation}
X=
\begin{pmatrix}
1 & 1 \\
\vdots & \vdots \\
1 & n
\end{pmatrix}.
\end{equation}
Besides, since the second-order difference matrix $D^{(2,n)}$ can be expressed as $D^{(2,n)}=D^{(1,n-1)}D^{(1,n)}$, using the first-order difference matrix
\begin{equation}
D^{(1,n)}\coloneqq
\begin{pmatrix}
1 & -1 & & & \\
& 1 & -1 & & \\
& & \ddots & \ddots & \\
& & & 1 & -1 \\
\end{pmatrix}
\in\mathbb{R}^{(n-1)\times n},
\end{equation}
we have
\begin{equation}
\left\|{D^{(2,n)}}^\top y\right\|_2^2=\left\|{D^{(1,n)}}^\top{D^{(1,n-1)}}^\top y\right\|_2^2\ge\lambda_{\min}(D^{(1,n)}{D^{(1,n)}}^\top)\lambda_{\min}(D^{(1,n-1)}
{D^{(1,n-1)}}^\top)\|y\|_2^2
\end{equation}
for all $y\in\mathbb{R}^{n-2}=\ker({D^{(2,n)}}^\top)^\perp$.
Using $\lambda_{\min}(D^{(1,n)}{D^{(1,n)}}^\top)=2(1-\cos\frac{\pi}{n})$ \citep[Theorem 2.2]{kulkarni1999eigenvalues} and Lemma \ref{lem:restricted bounded below}, we obtain $\sigma_{\min}(D^{(2,n)})\ge2\textstyle\sqrt{(1-\cos\frac{\pi}{n})(1-\cos\frac{\pi}{n-1})}$.
Since Assumption (A1) is satisfied with $\overline{x}=\hat{b}$, from Theorem \ref{thm:epp-square}, we have the desired result.
\end{proof}

\subsection{Proof of Theorem \ref{thm:NMF-clustering}}
\begin{proof}
Since $(-W^*,0)\in\mathcal{F}((W^*,X^*);\mathcal{C})$, we see from the d-stationarity of $(W^*,X^*)$ that
\begin{equation}
\nabla_Wf(W^*,X^*)\bullet(-W^*)\ge0.
\end{equation}
As $f$ is $\eta_1$-strongly convex with respect to $W$, we have
\begin{equation}
f(0,X^*)-f(W^*,X^*)-\frac{\eta_1}{2}\|W^*\|_F^2\ge0.
\end{equation}
Accordingly, it holds that $\|W^*\|_F^2\le\frac{1}{2\eta_1}\|A\|_F^2$.
Besides, for any $j\in[q]$, since $(0,(0,\ldots,-x_j^*,\ldots,0))\in\mathcal{F}((W^*,X^*);\mathcal{C})$ and $T_{1,n,1}'(x_j^*;-x_j^*)=-T_{1,n,1}(x_j^*)$, we see again from the d-stationarity of $(W^*,X^*)$ that
\begin{equation}
\nabla_{x_j}f(W^*,X^*)^\top(-x_j^*)-T_{1,n,1}(x_j^*)\ge0.
\end{equation}
As $f$ is $\eta_2$-strongly convex with respect to $x_j$ and $T_{1,n,1}$ is nonnegative, we obtain
\begin{equation}
\eta_2\|x_j^*\|_2^2+\frac{1}{2}\|a_j-W^*x_j^*\|_2^2\le\frac{1}{2}\|a_j\|_2^2.
\end{equation}
Then, for all $d_j$ such that $\|d_j\|_1$ and $d_j\in\{-1,0,1\}^n$, we can evaluate as
\begin{align}
\nabla_{x_j}f(W^*,X^*)^\top d_j &\le\|{W^*}^\top(W^*x_j^*-a_j)+\eta_2x_j^*\|_2\\
&\le\|W^*\|_F\|(W^*x_j^*-a_j)\|_2+\eta_2\|x_j^*\|_2\\
&\le\frac{\|a_j\|_2}{\sqrt{2}}\left(\frac{\|A\|_F}{\sqrt{\eta_1}}+\sqrt{\eta_2}\right).
\end{align}
From Lemma \ref{lem:epp-const-TL}, we have the desired result.
\end{proof}

\subsection{Proof of Theorem \ref{thm:sparse-port}}
\begin{proof}
We will derive a contradiction by assuming that $T_{K,n,1}(x^*)>0$.
From Lemma \ref{lem:d-derivative-T_K} and the d-stationarity of $x^*$, it holds that
\begin{equation}
f'(x^*;d)+\gamma\sum_{i\in\Lambda_1\cup\Lambda}\Delta(x_i^*;d_i)\ge0
\end{equation}
for some $\Lambda\subset\Lambda_2$ such that $|\Lambda|=n-K-|\Lambda_1|$ and any $d\in\mathcal{F}(x^*;\mathcal{C})$.
We note that there exists $i'\in\Lambda_1\cup\Lambda$ such that $x_{i'}^*\neq0$ because $T_{K,n,1}(x^*)>0$ is assumed, and we set $d'=x_{i'}^*(e_{i''}-e_{i'})\in\mathcal{F}(x^*;\mathcal{C})$ for some $i''\notin\Lambda_1\cup\Lambda$.
The existence of such an $i''$ is guaranteed by $K\ge1$.
Substituting $d=d'$, we have
\begin{equation}\label{ineq:d-stat-sparse-port}
f'(x^*;x_{i'}^*(e_{i''}-e_{i'})) \ge\gamma|x_{i'}^*|.
\end{equation}
Form Lemma \ref{lem:d-derivative-lipschitz}, the left hand side is bounded by
\begin{equation}
f'(x^*;x_{i'}^*(e_{i''}-e_{i'}))\le M'\|x_{i'}^*(e_{i''}-e_{i'})\|_2=\sqrt{2}M'|x_{i'}^*|.
\end{equation}
Combining this and \eqref{ineq:d-stat-sparse-port} yields $\gamma\le\sqrt{2}M'$, which is the contradiction to the inequality $\gamma>\sqrt{2}M'$.

Suppose also that $f$ is $M$-smooth and $0\in\mathcal{O}$, then we have
\begin{align}
f'(x^*;x_{i'}^*(e_{i''}-e_{i'})) &=\nabla f(x^*)^\top(x_{i'}^*(e_{i''}-e_{i'}))\\
&\le\|\nabla f(x^*)\|_2\|x_{i'}^*(e_{i''}-e_{i'})\|_2\\
&\le\sqrt{2}|x_{i'}^*|\left(\|\nabla f(0)\|_2+\|\nabla f(x^*)-\nabla f(0)\|_2\right)\\
&\le\sqrt{2}|x_{i'}^*|\left(\|\nabla f(0)\|_2+M\|x^*-0\|_2\right)\\
&\le\sqrt{2}|x_{i'}^*|\left(\|\nabla f(0)\|_2+M\|x^*\|_1\right)\\
&=\sqrt{2}|x_{i'}^*|\left(\|\nabla f(0)\|_2+M\right),
\end{align}
which implies that $\gamma\le\min\left\{\sqrt{2}M',\sqrt{2}\left(\|\nabla f(0)\|_2+M\right)\right\}$.
The contradiction to the assumption of $\gamma$ is derived.
\end{proof}

\subsection{Proof of Theorem \ref{thm:epp-smooth-bounded-tn}}
\begin{proof}
By the $M$-smoothness of $f$, for all $D\in\mathcal{F}(X^*;\mathcal{C})$ such that $\|D\|_*=1$, we obtain 
\begin{align}
f'(X^*,D) &=\nabla f(X^*)\bullet D\\
&\le\|\nabla f(X^*)\|_2\\
&\le\|\nabla f(0)\|_2+\|\nabla f(X^*)-\nabla f(0)\|_F\\
&\le\|\nabla f(0)\|_2+M\|X^*\|_F\\
&\le\|\nabla f(0)\|_2+MC,
\end{align}
where the first inequality follows from the generalized Cauchy-Schwarz inequality, the second one from the triangle inequality and $\|\cdot\|_2\le\|\cdot\|_F$, the third one from the $M$-smoothness of $f$, and the fourth one from the assumption $\|X^*\|_F\le C$.
Therefore, the assumption \eqref{ineq:bounded-truncated-nuclear} holds with $\Gamma=\|\nabla f(0)\|_2+MC$.
From Lemma \ref{lem:epp-GTL}, we have the desired result.
\end{proof}

\subsection{Proof of Theorem \ref{thm:epp-Lipschitz-tn}}
\begin{proof}
By the Lipschitz continuity of $f$ and Lemma \ref{lem:d-derivative-lipschitz}, we have $f'(X;D)\le M$ for all $X\in\mathcal{C}$ and $D\in\mathcal{F}(X^*;\mathcal{C})$ such that $\|D\|_*=1$.
This implies that the assumption of Lemma \ref{lem:epp-truncated-nuclear} holds with $\Gamma=M$.
This completes the proof.
\end{proof}

\subsection{Proof of Theorem \ref{thm:epp-small-rank-ols}}
\begin{proof}
First, we see that
\begin{align}
\mathcal{T}_K'(X^*;-X^*) &=\lim_{\eta\searrow0}\frac{\mathcal{T}_K(X^*-\eta X^*)-\mathcal{T}_K(X^*)}{\eta}\\
&=\lim_{\eta\searrow0}\frac{(1-\eta)\mathcal{T}_K(X^*)-\mathcal{T}_K(X^*)}{\eta}\\
&=\lim_{\eta\searrow0}\frac{-\eta\mathcal{T}_K(X^*)}{\eta}\\
&=-\mathcal{T}_K(X^*).
\end{align}
By the d-stationarity of $(x_1^*,...,x_L^*)$, we have
\begin{equation}\label{ineq:pre-descent-tn}
(\mathcal{A}^*(\mathcal{A}(X^*)-b))\bullet(0-X^*)-\gamma\mathcal{T}_K(X^*)\ge0.
\end{equation}
Since $f$ is convex, combining the inequality \eqref{ineq:pre-descent-tn}, Lemma \ref{lem:d-derivative-descent-lemma}, and non-negativity of $\mathcal{T}_K$ yields
\begin{equation}
\frac{1}{2}\|b\|_2^2\ge \frac{1}{2}\|\mathcal{A}(X^*)-b\|_2^2,
\end{equation}
that is, $\|b\|_2\ge\|\mathcal{A}(X^*)-b\|_2$.
Then, for all $D\in\mathcal{F}(X^*;\mathcal{C})$ such that $\|D\|_*=1$, we obtain
\begin{align}
f'(X^*;D) &=(\mathcal{A}^*(\mathcal{A}(X^*)-b))\bullet D\\
&\le\|\mathcal{A}^*(\mathcal{A}(X^*)-b)\|_2\\
&\le\|\mathcal{A}^*\|_{2,2}\|\mathcal{A}(X^*)-b\|_2\\
&\le\|\mathcal{A}^*\|_{2,2}\|b\|_2,
\end{align}
where the first inequality follows from the generalized Cauchy-Schwarz inequality.
From Lemma \ref{lem:epp-truncated-nuclear}, we have the desired result.
\end{proof}

\bibliography{reference.bib}
\bibliographystyle{plainnat}

\end{document}